\documentclass[a4paper,11pt,reqno]{amsart} 
\usepackage{amsmath,amssymb,amsfonts,amsthm}
\usepackage[tmargin=30mm,bmargin=30mm,hmargin=25mm]{geometry}
\linespread{1.25}
\usepackage{enumerate}
\usepackage{mathrsfs}
\usepackage[T1]{fontenc}
\usepackage{stmaryrd}
\usepackage[dvipsnames]{xcolor}
\usepackage{amscd}
\usepackage[all]{xy}
\usepackage{tikz}
\usetikzlibrary{arrows,decorations.pathmorphing,decorations.pathreplacing}
\tikzset{black/.style={circle,fill=black,inner sep=3pt,outer sep=3pt},white/.style={circle,fill=white,draw=black,inner sep=3pt,outer sep=3pt},b/.style={draw,fill=black,inner sep=3pt,outer sep=3pt},w/.style={draw,fill=white,inner sep=3pt,outer sep=3pt}}
\usepackage[colorlinks, linkcolor=blue!72,anchorcolor=orange,
citecolor=red,urlcolor=Emerald, bookmarksopen,bookmarksdepth=2]{hyperref}
\usepackage{cleveref}

\newcommand{\A}{\mathcal{A}}
\newcommand{\B}{\mathcal{B}}
\newcommand{\C}{\mathcal{C}}
\newcommand{\D}{\mathcal{D}}

\newcommand{\E}{\mathcal{E}}
\newcommand{\F}{\mathcal{F}}
\newcommand{\clH}{\mathcal{H}}
\newcommand{\clS}{\mathcal{S}}

\newcommand{\T}{\mathcal{T}}
\newcommand{\clU}{\mathcal{U}}
\newcommand{\V}{\mathcal{V}}
\newcommand{\X}{\mathcal{X}}
\newcommand{\Y}{\mathcal{Y}}
\newcommand{\add}[1]{\operatorname{add}({#1})}
\newcommand{\Db}[1]{D^b({#1})}

\newcommand{\Dl}[1]{\mathcal{D}^{\leqslant{#1}}}
\newcommand{\Dg}[1]{\mathcal{D}^{\geqslant{#1}}}
\newcommand{\Ul}[1]{\mathcal{U}^{\leqslant{#1}}}
\newcommand{\Ug}[1]{\mathcal{U}^{\geqslant{#1}}}
\newcommand{\Vl}[1]{\mathcal{V}^{\leqslant{#1}}}
\newcommand{\Vg}[1]{\mathcal{V}^{\geqslant{#1}}}
\newcommand{\Xl}[1]{\mathcal{X}^{\leqslant{#1}}}
\newcommand{\Xg}[1]{\mathcal{X}^{\geqslant{#1}}}
\newcommand{\Yl}[1]{\mathcal{Y}^{\leqslant{#1}}}
\newcommand{\Yg}[1]{\mathcal{Y}^{\geqslant{#1}}}

\newcommand{\Hm}{m\text{-}\mathcal{H}}

\newcommand{\Em}{m\text{-}\mathcal{E}}
\newcommand{\id}{\mathrm{id}}
\DeclareMathOperator{\tors}{\mathsf{tors}}

\DeclareMathOperator{\stors}{\mathsf{s-tors}}

\DeclareMathOperator{\tstr}{\mathsf{t-str}}
\newcommand{\Cone}{\operatorname{Cone}}
\newcommand{\Cocone}{\operatorname{Cocone}}
\renewcommand{\mod}{\operatorname{mod}}
\theoremstyle{definition}
\newtheorem{Lemma}{Lemma}[section]
\newtheorem{Thm}[Lemma]{Theorem}

\newtheorem{Coro}[Lemma]{Corollary}
\newtheorem{Def}[Lemma]{Definition}
\newtheorem{Prop}[Lemma]{Proposition}
\newtheorem{Remark}[Lemma]{Remark}
\newtheorem{Not}[Lemma]{Notation}
\newtheorem{Ex}[Lemma]{Example}
\numberwithin{equation}{section}
\title[Intervals of torsion pair and generalized HRS tilting]{Intervals of torsion pairs and generalized Happel-Reiten-Smal{\o} tilting}
\author{Jieyu Chen}
\author{Zengqiang Lin$^*$}
%\email{zqlin@hqu.cn}

\thanks{The authors were supported by the National Natural Science Foundation of China (Grant No.
	12471035) and by the Natural Science Foundation of Fujian Province (Grant No. 2024J01088)}
\thanks{$^*$Corresponding author}
\thanks{E-mail address: zqlin@hqu.edu.cn}
\subjclass[2020]{18G80, 18E40, 16G20}
\keywords{torsion pair, t-structure, HRS tilting, extended heart}

\begin{document}
	
	\begin{abstract}
		Let $\A$ be an abelian category with a torsion pair $(\T,\F)$. Happel-Reiten-Smal{\o} tilting provides a method to construct a new abelian category $\B$ with a torsion pair associated to $(\T,\F)$, which is exactly the heart of a certain $t$-structure on the bounded derived category $\Db\A$. In this paper, we mainly study generalized HRS tilting. We first show that an interval of torsion pairs in extriangulated categories with negative first extensions is bijectively associated with torsion pairs in the corresponding heart, which yields several new observations in triangulated categories. Then we obtain a generalization of HRS tilting by replacing hearts of $t$-structures with extended hearts. As an application, we show that certain $t$-structures on triangulated subcategories can be extended to $t$-structures on the whole triangulated categories.
	\end{abstract}
	
	\maketitle
	
	\section{Introduction}
	The notion of torsion pairs in abelian categories was first introduced by Dickson \cite{Dickson66}, and the triangulated version was studied by Iyama and Yoshino \cite{IY08}.  Be\u{\i}linson, Bernstein, and Deligne \cite{BBD82} introduced the notion of t-structures on triangulated categories to study perverse sheaves. The study of torsion pairs and $t$-structures has been of fundamental importance to homological algebra, representation theory, algebraic geometry, and derived categories. A landmark result in this direction is the correspondence established by Happel, Reiten, and Smal{\o} \cite{HRS96}, now known as HRS tilting. Let $\A$ be an abelian category with a torsion pair $(\T,\F)$. Let $\Db\A$ be the derived category of $\A$. They constructed an abelian subcategory $\B \subseteq \Db\A$ such that $(\F[1],\T)$ is a torsion pair in $\B$. Their main idea is to construct a $t$-structure on $\Db\A$ with heart $\B$ by the torsion pair $(\T,\F)$. The process from $(\A, (\T,\F))$ to $(\B, (\F[1],\T))$ is called classical HRS tilting. 
	
	The construction of $t$-structures on triangulated categories through torsion pairs in abelian subcategories has since been generalized and refined in various directions, see \cite{BR07,Polishchuk07,Tattar21}. Recently, Adachi, Enomoto, and Tsukamoto \cite{AET23} extended the HRS tilting framework to the setting of extriangulated categories equipped with a negative first extension. They introduced the notion of $s$-torsion pairs, which is a common generalization of t-structures on triangulated categories and torsion pairs in abelian categories, and showed that  an interval in the poset of $s$-torsion pairs is bijectively associated with $s$-torsion pairs in the corresponding heart; see \cite[Theorem 3.9]{AET23}.
	
	Since $s$-torsion pairs are torsion pairs satisfying one additional condition, our first goal is to investigate the intervals of torsion pairs and show that the bijections in \cite[Theorem 3.9]{AET23} are consequences of more general bijections in extriangulated categories. Let $\C$ be an extriangulated category with a negative first extension $\mathbb{E}^{-1}$. For two torsion pairs $t_1 = (\clU_1, \V_1)$ and $t_2 = (\clU_2, \V_2)$ in $\C$, we define a relation $t_1 \preccurlyeq t_2$ by requiring $\C(\clU_1,\V_2)=0$ and $\mathbb{E}^{-1}(\clU_1,\V_2)=0$. The interval $\tors[t_1, t_2]$ consists of all torsion pairs $t=(\clU,\V)$ such that $t_1 \preccurlyeq t \preccurlyeq t_2$. Our first main result establishes a bijection between the torsion pairs in such an interval and certain torsion pairs in the heart $\clH_{[t_1,t_2]}=\clU_2\cap \V_1$, which is also an extriangulated category with a negative first extension. 
	
	\begin{Thm}[\Cref{theorem3.8}]\label{theorem1.1}
		Let $\C$ be an extriangulated category with a negative first extension. Let $t_1=(\clU_1,\V_1)$ and $t_2=(\clU_2,\V_2)$ be two torsion pairs in $\C$ with $t_1 \preccurlyeq t_2$. There exist order preserving, mutually inverse bijections between
		\begin{enumerate}
			\item the set of torsion pairs (resp. $s$-torsion pairs) $t=(\clU,\V)$ in $\C$ with $t_1 \preccurlyeq t \preccurlyeq t_2$, 
			\item the set of torsion pairs (resp. $s$-torsion pairs)  $(\T,\F)$ in $\clH_{[t_1,t_2]}$ with $\mathbb{E}^{-1}(\T,\V_2)=0$ and $\mathbb{E}^{-1}(\clU_1,\F)=0$.
		\end{enumerate}
	\end{Thm}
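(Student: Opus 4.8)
The plan is to exhibit the two maps of the correspondence explicitly and to verify everything by hand; the technical heart is an identification of $\clU_2$ and $\V_1$ in terms of the heart, after which the rest is careful bookkeeping of $\mathbb{E}^{-1}$-vanishings. Write $\clH=\clH_{[t_1,t_2]}=\clU_2\cap\V_1$ and let $*$ denote the extension operation on subcategories. First I would record the elementary consequences of $t_1\preccurlyeq t\preccurlyeq t_2$: from the $\mathrm{Hom}$-vanishing clauses of $\preccurlyeq$ and the fact that the two classes of any torsion pair are mutual $\mathrm{Hom}$-orthogonals, one gets $\clU_1\subseteq\clU\subseteq\clU_2$ and $\V_2\subseteq\V\subseteq\V_1$. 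The structural backbone is then: $\clU_2=\clU_1*\clH$ and $\V_1=\clH*\V_2$. For the former, decompose $X\in\clU_2$ along $t_1$ as a conflation $U_1\to X\to X'$ with $U_1\in\clU_1$ and $X'\in\V_1$; applying the long exact sequence of this conflation to $\C(-,\V_2)$ and using $\C(X,\V_2)=0$ together with $\mathbb{E}^{-1}(\clU_1,\V_2)=0$ (a clause of $t_1\preccurlyeq t_2$) forces $\C(X',\V_2)=0$, so $X'\in\clU_2\cap\V_1=\clH$ and $X\in\clU_1*\clH$; the reverse inclusion holds because $\clU_2$ is extension-closed, and the claim for $\V_1$ is dual. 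Since $\clH$ is extension-closed in $\C$, a torsion pair $(\T,\F)$ on $\clH$ satisfies $\clH=\T*\F$ inside $\C$; combining this with associativity of $*$ and $\C=\clU_1*\V_1$ gives $\C=\clU_1*\clH*\V_2=\clU_1*\T*\F*\V_2=(\clU_1*\T)*(\F*\V_2)$.

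Define $\Phi(\clU,\V)=(\clU\cap\clH,\,\V\cap\clH)$ and $\Psi(\T,\F)=(\clU_1*\T,\,\F*\V_2)$. For well-definedness of $\Phi$ I would check that $(\clU\cap\clH,\V\cap\clH)$ is a torsion pair on $\clH$: orthogonality inside $\clH$ is inherited from $\C(\clU,\V)=0$, and for $H\in\clH$ the $t$-decomposition $U_H\to H\to V_H$ has $U_H\in\clH$ (use $\mathbb{E}^{-1}(\clU_1,\V)=0$, from $t_1\preccurlyeq t$, to see $U_H\in\V_1$) and $V_H\in\clH$ (use $\mathbb{E}^{-1}(\clU,\V_2)=0$, from $t\preccurlyeq t_2$, and $\C(\clU_2,\V_2)=0$ to see $V_H\in\clU_2$), so this conflation is the required decomposition in $\clH$, with parts in $\clU\cap\clH$ and $\V\cap\clH$; the two extra $\mathbb{E}^{-1}$-conditions hold since $\clU\cap\clH\subseteq\clU$, $\V\cap\clH\subseteq\V$ and $t_1\preccurlyeq t\preccurlyeq t_2$. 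For well-definedness of $\Psi$ I would check that $(\clU_1*\T,\F*\V_2)$ is a torsion pair on $\C$: the vanishing $\C(\clU_1*\T,\F*\V_2)=0$ is a two-line diagram chase over the $\mathrm{Hom}$-vanishings $\C(\clU_1,\F)=\C(\T,\F)=\C(\clU_1,\V_2)=\C(\T,\V_2)=0$ (for instance $\C(\clU_1,\F)=0$ because $\F\subseteq\V_1$ and $\C(\T,\V_2)=0$ because $\T\subseteq\clU_2$); the identity $\C=(\clU_1*\T)*(\F*\V_2)$ was obtained above; and $t_1\preccurlyeq\Psi(\T,\F)\preccurlyeq t_2$ is verified by feeding the hypothesized $\mathbb{E}^{-1}(\clU_1,\F)=0$, $\mathbb{E}^{-1}(\T,\V_2)=0$ and $\mathbb{E}^{-1}(\clU_1,\V_2)=0$ into the long exact sequences of the two defining conflations. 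Here I use the characterization, recalled in the preliminaries, that a pair $(\clU,\V)$ with $\C(\clU,\V)=0$ and $\C=\clU*\V$ is automatically a torsion pair.

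Next I would show the composites are identities. For $\Psi\Phi=\id$: the inclusion $\clU_1*(\clU\cap\clH)\subseteq\clU$ is immediate, and conversely, decomposing $U\in\clU$ along $t_1$ as $U_1\to U\to U'$ and invoking $\mathbb{E}^{-1}(\clU_1,\V)=0$ gives $\C(U',\V)=0$, hence $U'\in\clU\cap\V_1=\clU\cap\clH$, so $U\in\clU_1*(\clU\cap\clH)$; the torsion-free component is dual. For $\Phi\Psi=\id$: clearly $\T\subseteq(\clU_1*\T)\cap\clH$, and conversely, if $X\in(\clU_1*\T)\cap\V_1$ sits in a conflation $U_1\xrightarrow{a}X\xrightarrow{b}T$, then $a=0$ because $\C(\clU_1,X)=0$, and in an extriangulated category a conflation whose first morphism is zero has its middle term a direct summand of its third term, so $X$ is a summand of $T\in\T$ and hence lies in $\T$; the torsion-free side is dual, the conflation's second morphism being zero, so that its middle term is a summand of its first term. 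Order-preservation of $\Phi$ and $\Psi$ is immediate from the monotonicity of $(\clU,\V)\mapsto\clU\cap\clH$ and of $(\T,\F)\mapsto\clU_1*\T$ in the first component. The $s$-torsion case costs only one more vanishing in the bookkeeping: $\mathbb{E}^{-1}(\clU,\V)=0$ gives $\mathbb{E}^{-1}(\clU\cap\clH,\V\cap\clH)=0$, while conversely $\mathbb{E}^{-1}(\T,\F)=0$ together with the three vanishings already used forces $\mathbb{E}^{-1}(\clU_1*\T,\F*\V_2)=0$ by the same long exact sequences.

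The real obstacle I anticipate is organizational rather than conceptual. Several distinct families of $\mathbb{E}^{-1}$-vanishings are in play — those built into $t_1\preccurlyeq t_2$, those into $t_1\preccurlyeq t$ and $t\preccurlyeq t_2$, the two conditions hypothesized on $(\T,\F)$, and in the $s$-case $\mathbb{E}^{-1}(\T,\F)=0$ — and one must invoke exactly the right one at each use of a long exact sequence; dually, one must consistently replace the subobject/quotient arguments available in the abelian setting by the fact that in an extriangulated category a conflation one of whose morphisms vanishes splits on the corresponding side.
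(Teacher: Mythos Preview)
Your proposal is essentially the paper's own proof: you prove $\clU_2=\clU_1*\clH$ and $\V_1=\clH*\V_2$ exactly as in Lemma~3.1, define $\Phi$ and $\Psi$ identically, and check well-definedness and the inverse relations along the same lines as Lemmas~3.2--3.4 and~3.7. The one tactical difference is your verification of $\Phi\Psi=\id$: you argue that $X\in(\clU_1*\T)\cap\V_1$ sits in a conflation $U_1\xrightarrow{0}X\to T$ with vanishing first map, hence $X$ is a summand of $T$; the paper instead observes $\clH((\clU_1*\T)\cap\V_1,\F)\subseteq\C(\clU_1*\T,\F)=0$ and uses the orthogonality characterisation of $\T$. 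Both are valid.

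There is one genuine gap. Your sentence ``Order-preservation of $\Phi$ and $\Psi$ is immediate from the monotonicity of $(\clU,\V)\mapsto\clU\cap\clH$ and of $(\T,\F)\mapsto\clU_1*\T$ in the first component'' is not adequate, because $\preccurlyeq$ is \emph{not} defined by containment of torsion classes: it requires both $\C(\clU_1,\V_2)=0$ and $\mathbb{E}^{-1}(\clU_1,\V_2)=0$. Monotonicity in the first component only handles the $\mathrm{Hom}$ clause. For $\Psi$ you must separately verify $\mathbb{E}^{-1}(\clU_1*\T_1,\F_2*\V_2)=0$, and this needs all four vanishings $\mathbb{E}^{-1}(\clU_1,\F_2)=\mathbb{E}^{-1}(\clU_1,\V_2)=\mathbb{E}^{-1}(\T_1,\V_2)=\mathbb{E}^{-1}(\T_1,\F_2)=0$; the first and third come from the hypothesised extra conditions on $(\T_2,\F_2)$ and $(\T_1,\F_1)$ in $\widetilde{\tors}\clH$, not from $b_1\preccurlyeq b_2$ alone. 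This is exactly the bookkeeping you flag in your last paragraph, so the fix is easy---but it is not ``immediate from monotonicity.''
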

	
	In particular, triangulated categories can be viewed as extriangulated categories with negative first extensions. In this case, $(\clU_1,\V_1)\preccurlyeq (\clU_2,\V_2)$ if $\clU_1 \subseteq \clU_2$ and $\clU_1[1]\subseteq\clU_2$. We obtain the following as direct consequences.
	
	\begin{Coro}[\Cref{corollary3.10}, \Cref{corollary3.11}]\label{corollary1.2}
		Let $\D$ be a triangulated category with shift functor $[1]$. Let $(\clU_1,\V_1)$ and $(\clU_2,\V_2)$ be two torsion pairs such that $(\clU_1,\V_1)\preccurlyeq (\clU_2,\V_2)$. Let $(\C_1^{\leqslant 0},\C_1^{\geqslant 0})$ and $(\C_2^{\leqslant 0},\C_2^{\geqslant 0})$ be two $t$-structures such that $\C_1^{\leqslant 0}\subseteq\C_2^{\leqslant 0}$. Then there are order preserving, mutually inverse bijections
		\begin{enumerate}
			\item between the set of torsion pairs $(\clU,\V)$ in $\D$ with $(\clU_1,\V_1)\preccurlyeq(\clU,\V)\preccurlyeq (\clU_2,\V_2)$ and the set of torsion pairs $(\T,\F)$ in $\clU_2\cap\V_1$ with $\T[1]\subseteq\clU_2$ and $\F[-1]\subseteq \V_1$;
			\item between the poset of $t$-structures $(\D^{\leqslant 0},\D^{\geqslant 0})$ on $\D$ with $\clU_1\subseteq\D^{\leqslant 0}\subseteq\clU_2$ and the poset of $s$-torsion pairs $(\T,\F)$ in $\clU_2\cap\V_1$ with $\T[1]\subseteq\clU_2$ and $\F[-1]\subseteq \V_1$;
			\item between the poset of torsion pairs $(\clU,\V)$ in $\D$ with $\C_1^{\leqslant 0}\subseteq\clU\subseteq \C_2^{\leqslant 0}$ and the poset of torsion pairs in $\C_2^{\leqslant 0}\cap\C_1^{\geqslant 1}$;
			\item between the poset of $t$-structures $(\D^{\leqslant 0},\D^{\geqslant 0})$ on $\D$ with $\C_1^{\leqslant 0}\subseteq\D^{\leqslant 0}\subseteq \C_2^{\leqslant 0}$ and the poset of $s$-torsion pairs in $\C_2^{\leqslant 0}\cap\C_1^{\geqslant 1}$.
		\end{enumerate}
	\end{Coro}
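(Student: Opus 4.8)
The plan is to obtain all four bijections as specializations of \Cref{theorem1.1}. Recall that a triangulated category $\D$ is extriangulated with negative first extension $\mathbb{E}^{-1}(X,Y)=\D(X[1],Y)$, that with this structure a torsion pair $(\clU,\V)$ in $\D$ is an $s$-torsion pair precisely when $\clU[1]\subseteq\clU$ --- equivalently, when $(\clU,\V[1])$ is a $t$-structure --- and that for any torsion pair $(\clU,\V)$ in $\D$ one has $\clU={}^{\perp}\V$ and $\V=\clU^{\perp}$. Hence for subcategories $\X,\Y$ the vanishing $\mathbb{E}^{-1}(\X,\V)=0$ is equivalent to $\X[1]\subseteq\clU$, and $\mathbb{E}^{-1}(\clU,\Y)=0$ is equivalent to $\D(\clU,\Y[-1])=0$, i.e. to $\Y[-1]\subseteq\V$. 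I would begin by recording these dictionary entries together with the fact that the negative first extension on $\clH_{[t_1,t_2]}=\clU_2\cap\V_1$ is the restriction of the one on $\D$.

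For \textbf{(1)}, apply \Cref{theorem1.1} to $t_1\preccurlyeq t_2$. On the heart $\clH_{[t_1,t_2]}$ the condition $\mathbb{E}^{-1}(\T,\V_2)=0$ becomes $\T[1]\subseteq{}^{\perp}\V_2=\clU_2$ and $\mathbb{E}^{-1}(\clU_1,\F)=0$ becomes $\F[-1]\subseteq\clU_1^{\perp}=\V_1$, which is exactly the asserted description; order-preservation and mutual inverseness come directly from \Cref{theorem1.1}. For \textbf{(2)}, restrict the bijection of (1) to $s$-torsion pairs on both sides, which \Cref{theorem1.1} permits. An $s$-torsion pair $t=(\clU,\V)$ in $\D$ is the same datum as a $t$-structure $(\D^{\leqslant 0},\D^{\geqslant 0})$ with $\D^{\leqslant 0}=\clU$, and once $\clU$ is an aisle the relations $t_1\preccurlyeq t\preccurlyeq t_2$ reduce to $\clU_1\subseteq\clU\subseteq\clU_2$: indeed $\clU_1\subseteq\clU$ forces $\clU_1[1]\subseteq\clU[1]\subseteq\clU$ and $\clU\subseteq\clU_2$ forces $\clU[1]\subseteq\clU\subseteq\clU_2$, so the two shifted containments in the definition of $\preccurlyeq$ are automatic. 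This gives (2).

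For \textbf{(3)} and \textbf{(4)}, take $t_1=(\C_1^{\leqslant 0},\C_1^{\geqslant 1})$ and $t_2=(\C_2^{\leqslant 0},\C_2^{\geqslant 1})$; these are $s$-torsion pairs, so $\clU_1[1]\subseteq\clU_1$ and $\clU_2[1]\subseteq\clU_2$. Then $t_1\preccurlyeq t_2$ is equivalent to $\C_1^{\leqslant 0}\subseteq\C_2^{\leqslant 0}$ (the $\mathbb{E}^{-1}$-part being automatic), and for any torsion pair $(\clU,\V)$ the relation $t_1\preccurlyeq(\clU,\V)\preccurlyeq t_2$ is equivalent to $\C_1^{\leqslant 0}\subseteq\clU\subseteq\C_2^{\leqslant 0}$, using $\clU_1[1]\subseteq\clU_1\subseteq\clU$ and $\clU[1]\subseteq\C_2^{\leqslant 0}[1]=\C_2^{\leqslant -1}\subseteq\C_2^{\leqslant 0}$. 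Moreover, every $\T,\F\subseteq\clH_{[t_1,t_2]}=\C_2^{\leqslant 0}\cap\C_1^{\geqslant 1}$ automatically satisfies $\T[1]\subseteq\C_2^{\leqslant -1}\subseteq\C_2^{\leqslant 0}$ and $\F[-1]\subseteq\C_1^{\geqslant 2}\subseteq\C_1^{\geqslant 1}$, so the two side conditions occurring in (1) and (2) are vacuous here. Therefore (1) specializes to the bijection in (3), and (2) to the bijection in (4).

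I do not expect a substantive obstacle: the mathematical content is entirely in \Cref{theorem1.1}, and what remains is careful bookkeeping with shifts and with the identities $\clU={}^{\perp}\V$, $\V=\clU^{\perp}$ --- in particular, checking each time a definition or a heart-side condition involves a shift by $[1]$ or $[-1]$ that this shift is already forced by an aisle/coaisle containment and may therefore be discarded. The only inputs I would isolate and justify (or cite) separately are the identification of $s$-torsion pairs in $\D$ with $t$-structures under the chosen extriangulated structure, and the compatibility of the negative first extension with passage to $\clH_{[t_1,t_2]}$.
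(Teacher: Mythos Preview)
Your proposal is correct and follows essentially the same approach as the paper: both treat the corollary as a direct specialization of \Cref{theorem1.1} (the paper's \Cref{theorem3.8}) to triangulated categories, translating the $\mathbb{E}^{-1}$-vanishing conditions into the shifted containments $\T[1]\subseteq\clU_2$ and $\F[-1]\subseteq\V_1$, and then invoking \Cref{re3.6} (your ``side conditions are vacuous'' observation) when $t_1,t_2$ are $s$-torsion pairs to obtain (3) and (4). Your write-up in fact supplies more detail than the paper, which states \Cref{corollary3.10} and \Cref{corollary3.11} without proof.
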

	
	Recently, J{\o}rgensen \cite{Jorgensen21} replaced hearts of $t$-structures by proper abelian subcategories of triangulated categories and provided a generalization of  HRS tilting; see \cite[Theorem B]{Jorgensen21}. Let $\A$ be a proper abelian subcategory with a torsion pair $(\T,\F)$. He showed that, under certain conditions, $\F[1]*\T$ is a proper abelian subcategory with a torsion pair $(\F[1],\T)$. He also proved that there exist bijections between the torsion pairs $(\T,\F)$ in $\A$ and certain proper abelian subcategories restricted by $\A$.  Zhou \cite{Zhou24} introduced the notion of extended hearts in triangulated categories and gave another generalization of HRS tilting; see \cite[Theorem 0.1]{Zhou24}. Let $(\Ul0,\Ug0)$ be a $t$-structure with heart $\clH$. Let $m$ be a positive integer, then the $m$-extended heart is defined as 
	$$\Hm = \Ug {-(m-1)} \cap \Ul0=\clH[m-1]*\clH[m-2]*\cdots*\clH,$$ 
	which is an extriangulated category with a negative first extension. Let $(\T,\F)$ be an $s$-torsion pair in $\Hm$, Zhou proved that $\F[m]*\T$ is an $m$-extended heart with an $s$-torsion pair $(\F[m],\T)$. Our second goal is to prove that there exist bijections between the $s$-torsion pairs $(\T,\F)$ in $\Hm$ and certain $m$-extended hearts restricted by $\Hm$, thereby giving a generalization of HRS tilting for extended hearts.
	
	\begin{Thm}[\Cref{theorem4.8}]\label{theorem1.3}
		Let $\D$ be a triangulated category with shift functor $[1]$ and $\Hm$ be an $m$-extended heart. Then there are order preserving, mutually inverse bijections between
		\begin{enumerate}
			\item the poset of $s$-torsion pairs in $\Hm$,
			\item the poset of $m$-extended hearts $\Em$ with $\Em \subseteq \Hm[m]*\Hm$ and $\Hm \subseteq \Em*\Em[-m]$.
		\end{enumerate}
	\end{Thm}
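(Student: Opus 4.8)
The plan is to deduce Theorem~\ref{theorem1.3} from Theorem~\ref{theorem1.1} (i.e.\ \Cref{theorem3.8}) by exhibiting a suitable interval of torsion pairs in the ambient triangulated category $\D$ whose heart is exactly $\Hm$. Concretely, I would take $t_1$ and $t_2$ to be the two torsion pairs in $\D$ associated with the shifted $t$-structures so that the corresponding interval heart $\clH_{[t_1,t_2]}$ coincides with $\Hm = \Ug{-(m-1)}\cap\Ul0$; namely $t_1 = (\Ul{-m},{}^\perp(\Ul{-m}))$-type data built from $\Ul0[m]$ and $t_2$ built from $\Ul0$, shifted appropriately so that $t_1\preccurlyeq t_2$ holds (this is where the hypothesis that the gap is $m$ shifts, matching the $[1]$-closure condition in the triangulated reformulation after Corollary~\ref{corollary1.2}). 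One checks that the extra vanishing conditions $\mathbb{E}^{-1}(\T,\V_2)=0$ and $\mathbb{E}^{-1}(\clU_1,\F)=0$ in part~(2) of Theorem~\ref{theorem1.1} become automatic in this setup, because the negative first extension on $\Hm$ is computed inside $\D$ as $\mathbb{E}^{-1}(X,Y)=\Hom_\D(X,Y[-1])$ and the boundary categories $\clU_1,\V_2$ are the outermost pieces $\Hm[m]$-side and $\Hm[-m]$-side, so these Hom-spaces vanish for degree reasons. Hence Theorem~\ref{theorem1.1} gives a bijection between torsion pairs in the interval $[t_1,t_2]$ and \emph{all} $s$-torsion pairs in $\Hm$, which is exactly item~(1) of the statement.

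Next I would identify, under this bijection, what the torsion pairs $t=(\clU,\V)$ in the interval correspond to on the "tilted" side. By the generalized HRS tilting for extended hearts recalled from Zhou \cite{Zhou24}, an $s$-torsion pair $(\T,\F)$ in $\Hm$ produces the $m$-extended heart $\F[m]*\T$; so the content is to show that as $(\clU,\V)$ ranges over $\tors[t_1,t_2]$, the associated hearts $\clH_{[t,t']}$ (for the appropriate endpoint) realize precisely the $m$-extended hearts $\Em$ satisfying the two containment conditions $\Em\subseteq\Hm[m]*\Hm$ and $\Hm\subseteq\Em*\Em[-m]$. The containment $\Em\subseteq\Hm[m]*\Hm$ should translate into $t_1\preccurlyeq t$ (the torsion class $\clU$ is not too small), and $\Hm\subseteq\Em*\Em[-m]$ into $t\preccurlyeq t_2$ (it is not too large); unwinding the definitions of $*$ and of $\preccurlyeq$ here is the routine but slightly delicate bookkeeping step. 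Order preservation in both directions is inherited from the order-preserving property already established in Theorem~\ref{theorem1.1}.

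The main obstacle I anticipate is the precise matching of the two "restricted by $\Hm$" conditions on $m$-extended hearts with the interval conditions $t_1\preccurlyeq t\preccurlyeq t_2$. One has to show two things: first, that every $m$-extended heart $\Em$ with $\Em\subseteq\Hm[m]*\Hm$ and $\Hm\subseteq\Em*\Em[-m]$ actually arises as $\F[m]*\T$ for some $s$-torsion pair $(\T,\F)$ in $\Hm$ — i.e.\ these containment conditions are exactly the ones guaranteeing that an abstract $m$-extended heart sits in the right "window" relative to $\Hm$ — and second, that the two conditions are not just necessary but cut out the image of the bijection exactly, with no hidden extra constraint. I would prove the first by extracting the candidate torsion class $\T = \Em\cap\Hm$ (the "overlap" of the two extended hearts) and $\F$ its complement in $\Hm$, then verifying $(\T,\F)$ is an $s$-torsion pair using the extriangulated structure and the containment hypotheses; the second follows once one checks that $(\F[m]*\T)\cap\Hm = \T$ and that $\F[m]*\T$ lies in the prescribed window, so the two maps are mutually inverse. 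Throughout I would lean on the already-proven extriangulated interval theorem so that closure under extensions, the approximation sequences, and order-preservation come for free, and only the translation dictionary between $*$-notation for extended hearts and the $\preccurlyeq$-relation for torsion pairs needs to be written out carefully.
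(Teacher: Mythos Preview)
Your plan correctly identifies the two candidate maps $\phi:(\T,\F)\mapsto\F[m]*\T$ and $\psi:\Em\mapsto(\Em\cap\Hm,\Hm\cap\Em[-m])$, and the step of verifying that $(\Em\cap\Hm,\Hm\cap\Em[-m])$ is an $s$-torsion pair in $\Hm$ is exactly the content of the paper's \Cref{lemma4.7}. But there is a genuine gap in your surjectivity argument for $\phi$: knowing that $(\T,\F):=(\Em\cap\Hm,\Hm\cap\Em[-m])$ is an $s$-torsion pair does \emph{not} yet tell you that $\F[m]*\T=\Em$. You have produced a candidate pair, but you still owe the equality $\phi\psi(\Em)=\Em$; your proposed check ``$(\F[m]*\T)\cap\Hm=\T$'' only yields $\psi\phi=\id$, which gives injectivity of $\phi$, not surjectivity. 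The paper closes this gap with a short symmetric trick: since $\Em\leqslant\Hm$ is equivalent to $\Hm\leqslant\Em[-m]$, one applies \Cref{lemma4.7} with the roles of the two extended hearts swapped, obtaining that $(\Em[-m]\cap\Hm,\Em[-m]\cap\Hm[-m])$ is an $s$-torsion pair in $\Em[-m]$; hence $\Em[-m]=(\Em[-m]\cap\Hm)*(\Em[-m]\cap\Hm[-m])$, which after shifting by $[m]$ reads $\Em=\F[m]*\T$.

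Separately, the detour through \Cref{theorem3.8} buys you less than you suggest. \Cref{proposition4.1} does give $\stors\Hm\cong\tstr[\clU[m],\clU]$, but passing from $t$-structures in that interval to $m$-extended hearts in $[\Hm[m],\Hm]$ still requires exactly the $\phi\psi=\id$ argument above: an arbitrary $\Em\in[\Hm[m],\Hm]$ is by definition the $m$-extended heart of \emph{some} $t$-structure on $\D$, but there is no reason a priori for that $t$-structure to lie in the interval $[\clU[m],\clU]$, so you cannot simply invert the ``take the $m$-extended heart'' map. Order preservation for the relation $\leqslant$ of \Cref{notation4.4} likewise does not come for free from \Cref{theorem3.8}, whose order is inclusion of torsion classes in $\D$; the paper checks both directions directly using \Cref{lemma4.3}. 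In the end the paper does not route through torsion pairs in $\D$ at all: it establishes the bijection $\stors\Hm\leftrightarrow[\Hm[m],\Hm]$ directly from \Cref{lemma4.3}, \Cref{example4.5}, and \Cref{lemma4.7}.
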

	
	Our third goal is to investigate the conditions under which a $t$-structure on a triangulated subcategory can be extended to a $t$-structure on the whole triangulated category. By \Cref{corollary1.2} (4), we have the following third theorem, which refines and reformulates earlier work on $t$-structures and generalizes \cite[Theorem 5.1]{CLZ23}.
	
	\begin{Thm}[\Cref{theorem5.3}]\label{theorem1.4}
		Let $\D$ be a triangulated category with shift functor $[1]$ and $(\Ul0,\Ug0)$ be a $t$-structure with heart $\clH$. Let $\clS$ be a triangulated subcategory of $\D$ such that $(\clS\cap\Ul0,\clS\cap\Ug0)$ is a $t$-structure on $\clS$ and $\clH\subseteq\clS$. Then there are order preserving, mutually inverse bijections between
		\begin{enumerate}
			\item the poset of t-structures $(\X^{\leqslant 0},\X^{\geqslant 0})$ on $\D$ with $\Ul{-m}\subseteq\X^{\leqslant 0}\subseteq \Ul0$,
			\item the poset of t-structures $(\Y^{\leqslant 0},\Y^{\geqslant 0})$ on $\clS$ with $\clS\cap\Ul{-m}\subseteq\Y^{\leqslant 0}\subseteq \clS\cap\Ul0$.
		\end{enumerate}
	\end{Thm}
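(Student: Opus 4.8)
The plan is to derive \Cref{theorem1.4} from \Cref{corollary1.2}(4) by applying that corollary twice — once inside $\D$ and once inside $\clS$ — the point being that the two applications feed off the \emph{same} intermediate category, namely the $m$-extended heart $\Hm=\Ug{-(m-1)}\cap\Ul0=\clH[m-1]*\cdots*\clH$. Throughout I use the conventions $\Ul{-m}=\Ul0[m]$ and $\Ug{-m}=\Ug0[m]$, so that $\Ug{-m}[-1]=\Ug{-(m-1)}$ and $\Ug0[-1]=\Ug1$.

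First I would check the hypotheses needed to invoke \Cref{corollary1.2}(4). Since $\Ul0[1]\subseteq\Ul0$, iteration gives $\Ul{-m}=\Ul0[m]\subseteq\Ul0$, and $(\Ul{-m},\Ug{-m})$ is a $t$-structure on $\D$, being the $[m]$-shift of $(\Ul0,\Ug0)$. As $\clS$ is a triangulated subcategory we have $\clS[m]=\clS$, hence $(\clS\cap\Ul{-m},\clS\cap\Ug{-m})=\bigl((\clS\cap\Ul0)[m],(\clS\cap\Ug0)[m]\bigr)$ is the $[m]$-shift of the given $t$-structure on $\clS$, hence a $t$-structure on $\clS$, with $\clS\cap\Ul{-m}\subseteq\clS\cap\Ul0$. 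Finally — and this is the only place the hypothesis $\clH\subseteq\clS$ is used — because $\clS$ is closed under shifts and extensions and $\Hm=\clH[m-1]*\cdots*\clH$, we get $\Hm\subseteq\clS$, so $\clS\cap\Hm=\Hm$; moreover a full triangulated subcategory has the same distinguished triangles as the ambient category, so the extriangulated structure together with the negative first extension that $\Hm$ inherits from $\clS$ coincides with the one it inherits from $\D$.

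Now apply \Cref{corollary1.2}(4) inside $\D$ to the two $t$-structures $(\Ul{-m},\Ug{-m})$ and $(\Ul0,\Ug0)$: its intermediate category is $\Ul0\cap\bigl(\Ug{-m}[-1]\bigr)=\Ul0\cap\Ug{-(m-1)}=\Hm$, so the corollary supplies an order-preserving bijection between poset (1) and the poset of $s$-torsion pairs in $\Hm$. Apply \Cref{corollary1.2}(4) inside the triangulated category $\clS$ to the two $t$-structures $(\clS\cap\Ul{-m},\clS\cap\Ug{-m})$ and $(\clS\cap\Ul0,\clS\cap\Ug0)$: its intermediate category is $(\clS\cap\Ul0)\cap\bigl((\clS\cap\Ug{-m})[-1]\bigr)=\clS\cap\Ul0\cap\Ug{-(m-1)}=\clS\cap\Hm=\Hm$, so the corollary supplies an order-preserving bijection between poset (2) and the poset of $s$-torsion pairs in $\Hm$. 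By the previous paragraph these two posets of $s$-torsion pairs are literally the same; composing the first bijection with the inverse of the second therefore yields the order-preserving, mutually inverse bijections between (1) and (2) asserted in the statement.

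To make the bijection explicit I would unwind the maps of \Cref{theorem1.1}: in \Cref{corollary1.2}(4) a $t$-structure $(\X^{\leqslant 0},\X^{\geqslant 0})$ corresponds to the $s$-torsion pair $\bigl(\X^{\leqslant 0}\cap\Ug{-(m-1)},\ \Ul0\cap\X^{\geqslant 1}\bigr)$ in $\Hm$, with inverse $(\T,\F)\mapsto(\Ul{-m}*\T,\ \F*\Ug1)$. Since $\T,\F\subseteq\Hm\subseteq\clS$ and $\clS$ is closed under extensions, chasing the torsion triangles gives $\clS\cap(\Ul{-m}*\T)=(\clS\cap\Ul{-m})*\T$ and $\clS\cap(\F*\Ug1)=\F*(\clS\cap\Ug1)$; it follows that the composite bijection $(1)\to(2)$ is simply restriction $(\X^{\leqslant 0},\X^{\geqslant 0})\mapsto(\clS\cap\X^{\leqslant 0},\clS\cap\X^{\geqslant 0})$, with inverse the reconstruction of \Cref{theorem1.1} sending $(\Y^{\leqslant 0},\Y^{\geqslant 0})$ to the $t$-structure on $\D$ whose aisle is $\Ul{-m}*(\Y^{\leqslant 0}\cap\Ug{-(m-1)})$. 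I do not foresee a genuine obstacle: all the content lies in the bookkeeping just sketched and in the identity $\clS\cap\Hm=\Hm$ — precisely where $\clH\subseteq\clS$ enters — while everything else is formal once \Cref{corollary1.2}(4) is available.
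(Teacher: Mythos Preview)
Your proposal is correct and follows essentially the same route as the paper: apply the bijection of \Cref{corollary1.2}(4) (which the paper records separately as \Cref{proposition4.1}) once in $\D$ and once in $\clS$, and observe that both land in $\stors\Hm$ because $\clH\subseteq\clS$ forces $\clS\cap\Hm=\Hm$ with the same induced extriangulated structure. The paper then devotes \Cref{lemma5.1} and \Cref{lemma5.2} to computing the composite maps explicitly (obtaining $\lambda=\psi'\circ\phi$ as restriction and $\mu=\psi\circ\phi'$ as $(\Yl0,\Yg0)\mapsto(\Ul{-m}*\Yl0,\Yg0*\Ug0)$), which is precisely the ``unwinding'' you sketch in your final paragraph; your claim that $\clS\cap(\Ul{-m}*\T)=(\clS\cap\Ul{-m})*\T$ via torsion triangles is exactly the content of those lemmas, so if you want the explicit formulas rather than just the abstract bijection you should expect to carry out that truncation/octahedron argument in full.
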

	
	The article is organized as follows. In Section 2, we recall the necessary background material on extriangulated categories with negative first extensions, torsion pairs, $s$-torsion pairs, and $t$-structures. In section 3, we study the intervals of torsion pairs and prove \Cref{theorem1.1}. In Section 4, we generalize HRS tilting on extended hearts and prove \Cref{theorem1.3}. In the final section, we discuss the extensions of $t$-structures and prove \Cref{theorem1.4}.
	
	\section{Preliminaries}
	Throughout this paper, we assume that each category is skeletally small, that is, the isomorphism classes of objects form a set. All subcategories are assumed to be full and closed under isomorphisms. In this section, we introduce some basic definitions and facts that will be needed later.
	
	\subsection{Extriangulated categories with negative first extensions}
	
	We will use the notion of an extriangulated category from \cite{NP19} without recalling the complete definition. An \emph{extriangulated category} consists of a triple $(\C,\mathbb{E},\mathfrak{s})$ satisfying certain axioms, where $\C$ is an additive category, $\mathbb{E}:\C^{\mathrm{op}}\times \C \rightarrow Ab$ is an additive bifunctor and $\mathfrak{s}$ is an additive realization of $\mathbb{E}$, which maps an \emph{$\mathbb{E}$-extension} $\delta \in \mathbb{E}(C,A)$ to an equivalence class of pairs of morphisms $[A\stackrel{x}{\longrightarrow}B\stackrel{y}{\longrightarrow}C]$. In this case, we call $A\stackrel{x}{\longrightarrow}B\stackrel{y}{\longrightarrow}C\stackrel{\delta}{\dashrightarrow}$ an \emph{$\mathbb{E}$-triangle}.
	
	\begin{Ex} \label{example2.1}
		\begin{enumerate}
			\item[] 
			\item A triangulated category $\D$ with shift functor $[1]$ becomes an extriangulated category $(\D,\mathbb{E},\mathfrak{s})$ by the following data.
			\begin{enumerate}[(i)]
				\item $\mathbb{E}(-,-):=\D(-,-[1])$.
				\item  For any $A,C \in \D$, take an arbitrary $\mathbb{E}$-extension $\delta \in \mathbb{E}(C,A)$ and a distinguished triangle $A\stackrel{x}{\longrightarrow} B\stackrel{y}{\longrightarrow}C\stackrel{\delta}{\longrightarrow} A[1]$, and let $\mathfrak{s}(\delta)=[A\stackrel{x}{\longrightarrow}B \stackrel{y}{\longrightarrow}C]$. 
			\end{enumerate}
			\item An exact category $\E$ becomes an extriangulated category $(\E,\mathbb{E},\mathfrak{s})$ by the following data.
			\begin{enumerate}[(i)]
				\item $\mathbb{E}(-,-):=\operatorname{Ext}^1(-,-)$.
				\item For any $A,C \in \E$ and $\delta=[A\stackrel{x}{\longrightarrow}B \stackrel{y}{\longrightarrow}C]\in \operatorname{Ext}^1(C,A)$, let $\mathfrak{s}(\delta)=\delta$. 
			\end{enumerate}    
		\end{enumerate}
	\end{Ex}
	
	Let $\X$, $\Y$ and $\mathcal{Z}$ be subcategories of an extriangulated category $\C$. Let $\X * \Y$ denote the subcategory of $\C$ consisting of $A$ which admits an $\mathbb{E}$-triangle $X \rightarrow A \rightarrow Y\dashrightarrow$ with $X \in \X$ and $Y \in \Y$. By axioms (ET4) and (ET4)$^\textrm{op}$, it follows that $\X*(\Y*\mathcal{Z})=(\X*\Y)*\mathcal{Z}$. We say $\X$ is closed under extensions if $\X * \X =\X$. Denote by $\Cone(\X,\Y)$ the subcategory of $\C$ consisting of $A$ which admits an $\mathbb{E}$-triangle $X \rightarrow Y \rightarrow A \dashrightarrow$ with $X \in \X$ and $Y \in Y$. We say $\X$ is closed under cones if $\Cone(\X, \X) =\X$. Denote by $\Cocone(\X,\Y)$ the subcategory of $\C$ consisting of $A$ which admits an  $\mathbb{E}$-triangle $A \rightarrow X \rightarrow Y \dashrightarrow$ with $X \in \X$ and $Y \in Y$. We say $\X$ is closed under cocones if $\Cocone(\X, \X) =\X$.
	
	\begin{Def}[{\cite[Definition 2.3]{AET23}}]\label{definition2.2}
		Let $\C=(\C,\mathbb{E},\mathfrak{s})$ be an extriangulated category. A \emph{negative first extension structure} on $\C$ consists of the following data.
		\begin{enumerate}
			\item  $\mathbb{E}^{-1}: \C^{\mathrm{op}}\times \C \rightarrow Ab$ is an additive bifunctor.
			\item For an arbitrary $\mathbb{E}$-triangle $A{\rightarrow}B{\rightarrow}C{\dashrightarrow}$ and an object $W \in \C$, the sequences
			\begin{align*}
				\mathbb{E}^{-1}(W,A) \rightarrow \mathbb{E}^{-1}(W,B) \rightarrow &\mathbb{E}^{-1}(W,C) \rightarrow \C(W,A)\rightarrow \C(W,B),\\
				\mathbb{E}^{-1}(C,W) \rightarrow \mathbb{E}^{-1}(B,W) \rightarrow &\mathbb{E}^{-1}(A,W) \rightarrow \C(C,W)\rightarrow \C(B,W)
			\end{align*}
			are exact.
		\end{enumerate}
	\end{Def}
	
	\begin{Ex}\label{example2.3}
		\begin{enumerate}
			\item[]
			\item Let $\D$ be a triangulated category. Then $\D$ is an extriangulated category with a negative first extension, where $\mathbb{E}^{-1}(-,-)=\D(-,-[-1])$; see \cite[Example 2.4]{AET23}.
			\item Let $\E$ be an exact category. Then $\E$ is an extriangulated category with a negative first extension, where $\mathbb{E}^{-1}(-,-)=0$.
			\item  Let $k$ be a field and $\Lambda$ be a finite-dimensional $k$-algebra with $\operatorname{gl.dim}\Lambda \leqslant n$. Then $\mod \Lambda$  is an extriangulated category with a negative first extension, where $\mathbb{E}^{-1}(-,-)=\operatorname{Ext}^n_\Lambda(-,-)$; see  \cite[Example 3.19]{AET23}.
			\item  Let $\C=(\C,\mathbb{E},\mathfrak{s},\mathbb{E}^{-1})$ be an extriangulated category with a negative first extension and $\C'$ be an extension-closed subcategory of $\C$. By restriction, $\C'$ inherits an extriangulated structure and a negative first extension structure.
		\end{enumerate}
	\end{Ex}
	
	\subsection{Torsion pairs and \texorpdfstring{$t$}{t}-structures}
	
	\begin{Def}[{\cite[Definition 4.1]{Tattar24}}]
		Let $\C=(\C,\mathbb{E},\mathfrak{s})$ be an extriangulated category, and let $\clU$ and $\V$ be additive subcategories of $\C$, which are closed under both direct sums and direct summands. We say that $(\clU,\V)$ is a \emph{torsion pair} in $\C$, where $\clU$ is a \emph{torsion class} and $\V$ a \emph{torsionfree class}, if the following two conditions are satisfied.
		\begin{enumerate}
			\item[(TP1)] $\C(\clU,\V)=0$;
			\item[(TP2)] $\C=\clU * \V$. 
		\end{enumerate}
	\end{Def}  
	
	\begin{Lemma}[{\cite[Remark 4.2]{Tattar24}}]
		Let $\C=(\C,\mathbb{E},\mathfrak{s})$ be an extriangulated category, and $(\clU,\V)$ be a torsion pair in $\C$. Then the following holds.
		\begin{enumerate}
			\item $\clU$ and $\V$ are closed under extensions;
			\item $\clU=\{A \in \C \mid \C(A,\V)=0\}$, $\V=\{A \in \C \mid \C(\clU,A)=0\}$. Therefore, for any subcategory $\mathcal{W}$ of $\C$, we have $\mathcal{W}\subseteq\clU$ if and only if  $\C(\mathcal{W},\V)=0$, and $\mathcal{W}\subseteq\V$ if and only if  $\C(\clU,\mathcal{W})=0$.
		\end{enumerate} 
	\end{Lemma}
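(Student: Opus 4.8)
The statement to be proved asserts two things about a torsion pair $(\clU,\V)$ in an extriangulated category $\C$: first, that $\clU$ and $\V$ are closed under extensions, and second, the orthogonality descriptions $\clU=\{A\in\C\mid\C(A,\V)=0\}$ and $\V=\{A\in\C\mid\C(\clU,A)=0\}$, together with the resulting characterization of subcategories contained in $\clU$ or $\V$. The overall strategy is to exploit the decomposition axiom (TP2), which says $\C=\clU*\V$: every object of $\C$ sits in an $\mathbb{E}$-triangle $U\to A\to V\dashrightarrow$ with $U\in\clU$, $V\in\V$, and to combine this with the Hom-vanishing (TP1) $\C(\clU,\V)=0$. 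The proof splits naturally into the two bullet points, and the second is the one carrying the real content.

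\textbf{Step 1: closure under extensions.} Suppose $A\in\C$ admits an $\mathbb{E}$-triangle $U_1\to A\to U_2\dashrightarrow$ with $U_1,U_2\in\clU$; I want $A\in\clU$. By (TP2) choose an $\mathbb{E}$-triangle $U\xrightarrow{x} A\xrightarrow{y} V\dashrightarrow$ with $U\in\clU$, $V\in\V$. The composite $U_1\to A\xrightarrow{y}V$ lies in $\C(\clU,\V)=0$, so $y$ kills the image of $U_1\to A$; by the long exact sequence in $\C(-,V)$ (or by the universal property of the $\mathbb{E}$-triangle defining $A$), $y$ factors through $A\to U_2$, giving a morphism $U_2\to V$, which again lies in $\C(\clU,\V)=0$. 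Hence $y=0$, and a standard extriangulated argument (the $\mathbb{E}$-triangle $U\xrightarrow{x}A\xrightarrow{0}V\dashrightarrow$ forces $A$ to be a direct summand of $U$, using that $x$ is then a split monomorphism) shows $A$ is a summand of $U\in\clU$; since $\clU$ is closed under summands, $A\in\clU$. The argument for $\V$ is dual, using the second exactness in the definition of a negative first extension is \emph{not} needed here — only the ordinary long exact sequences from the $\mathbb{E}$-bifunctor. So $\clU*\clU\subseteq\clU$ and dually $\V*\V\subseteq\V$.

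\textbf{Step 2: the orthogonality descriptions.} Set $\clU'=\{A\in\C\mid\C(A,\V)=0\}$. The inclusion $\clU\subseteq\clU'$ is immediate from (TP1). For the reverse inclusion, take $A\in\clU'$ and pick the canonical $\mathbb{E}$-triangle $U\xrightarrow{x}A\xrightarrow{y}V\dashrightarrow\delta$ from (TP2). Apply $\C(A,-)$: the connecting map or the long exact sequence yields that $y\in\C(A,V)=0$, whence as in Step 1 the triangle splits and $V$ is a direct summand of $A$; but then $\C(V,V)$ receives $\mathrm{id}_V$ as a summand of a composite through $A$, and also $\C(A,V)=0$ forces $\mathrm{id}_V=0$, i.e. $V\cong 0$. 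Therefore $x$ is an isomorphism and $A\cong U\in\clU$. This proves $\clU=\clU'$; the equality $\V=\{A\mid\C(\clU,A)=0\}$ is dual. The final clause is then purely formal: $\mathcal{W}\subseteq\clU$ iff every object of $\mathcal{W}$ is killed on the right by $\V$, i.e. iff $\C(\mathcal{W},\V)=0$, and symmetrically for $\V$.

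\textbf{Main obstacle.} The only non-formal point is the repeated move ``$y=0$ in an $\mathbb{E}$-triangle $U\xrightarrow{x}A\xrightarrow{y}V\dashrightarrow$ implies $A$ is a summand of $U$ and $V\cong 0$.'' In a triangulated category this is the familiar fact that a triangle with a zero map splits; in a general extriangulated category one must instead argue via the additive realization: when $y=0$ the $\mathbb{E}$-triangle is the split one (up to equivalence), because $\mathfrak{s}$ sends the split extension precisely to $[U\xrightarrow{\binom{1}{0}}U\oplus V\xrightarrow{(0\ 1)}V]$, and one needs a small lemma (standard in \cite{NP19}, and essentially recorded in \cite{AET23}) that an $\mathbb{E}$-triangle whose second map is zero is split. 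Granting that lemma — or, alternatively, running the whole argument through the long exact sequences of the $\mathbb{E}$-bifunctor to show directly that $\mathrm{id}_V$ is both a summand of something factoring through $\C(A,V)=0$ and hence zero — the rest is bookkeeping. I would present Step 2 first (it is the substantive statement) and deduce nothing circular, then note Step 1 follows by the same splitting lemma.
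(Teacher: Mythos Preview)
The paper does not give its own proof of this lemma; it is stated with a citation to \cite[Remark 4.2]{Tattar24} and used as a black box. So there is nothing to compare your approach against, and I will simply assess correctness.

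Your overall strategy is right, but the execution contains a genuine slip that you yourself half-flag in the ``Main obstacle'' paragraph. The claim ``$y=0$ in an $\mathbb{E}$-triangle $U\xrightarrow{x}A\xrightarrow{y}V\dashrightarrow$ implies the triangle splits (so $V$ is a summand of $A$, hence $V\cong 0$)'' is \emph{false} in general: already in a triangulated category the triangle $U\to 0\to U[1]\xrightarrow{\id}U[1]$ has middle map zero without being split. What $y=0$ actually buys you, via exactness of $\C(A,U)\xrightarrow{x_*}\C(A,A)\xrightarrow{y_*}\C(A,V)$, is a morphism $r\colon A\to U$ with $xr=\id_A$; that is, $x$ is a split \emph{epimorphism} (not a ``split monomorphism'' as you wrote), so $A$ is a retract of $U$. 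Since $\clU$ is closed under direct summands, $A\in\clU$ and you are done. You never need $V\cong 0$, and you never need the $\mathbb{E}$-triangle to split.

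With that correction, Step~2 is clean. Step~1 then follows formally from Step~2 without any splitting argument at all: given an $\mathbb{E}$-triangle $U_1\to A\to U_2\dashrightarrow$ with $U_1,U_2\in\clU$, apply $\C(-,V)$ for any $V\in\V$ to obtain the exact sequence $0=\C(U_2,V)\to\C(A,V)\to\C(U_1,V)=0$, whence $\C(A,\V)=0$ and Step~2 gives $A\in\clU$. This is simpler than the factor-through-$U_2$ route you sketched. I would reorganize accordingly: prove the orthogonality description first, then deduce closure under extensions as a one-line corollary.
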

	
	\begin{Def}[{\cite[Definition 3.1]{AET23}}]
		Let $\C=(\C,\mathbb{E},\mathfrak{s},\mathbb{E}^{-1})$ be an extriangulated category with a negative first extension. A torsion pair $(\clU,\V)$ is called an \emph{$s$-torsion pair} if it satisfies the additional condition.
		\begin{enumerate}
			\item[(STP)]  $\mathbb{E}^{-1}(\clU,\V)=0$.
		\end{enumerate}
	\end{Def}
	
	Before giving some specific examples of  $s$-torsion pairs, let us recall $t$-structures on triangulated categories.
	
	\begin{Def}[{\cite[Definition 1.3.1]{BBD82}}]
		Let $\D$ be a triangulated category and $(\Ul0,\Ug0)$ a pair of subcategories of $\D$. For any integer $n$, we denote $\Ul{n}=\Ul0[-n]$ and $\Ug{n}=\Ug0[-n]$. We call $(\Ul0,\Ug0)$ a \emph{$t$-structure} on $\D$ if it satisfies the following three conditions.
		\begin{enumerate}
			\item[(t1)] $\D(\Ul0,\Ug1)=0$;
			\item[(t2)] $\D=\Ul0 * \Ug1$;
			\item[(t3)] $\Ul0 \subseteq \Ul1$, $\Ug1 \subseteq \Ug0$.
		\end{enumerate}
		In this case, we call $\Ul0$ an \emph{aisle} and $\Ug0$ a \emph{co-aisle}. The \emph{heart} of the $t$-structure $(\Ul0,\Ug0)$ is $\clH:=\Ul0 \cap \Ug0$. It is well-known that the heart of a $t$-structure is an abelian category; see \cite[Theorem 1.3.6]{BBD82}.
	\end{Def}
	
	Let $\A$ be an abelian category and $\Db\A$ its bounded derived category. Then $(\Ul0,\Ug0)$ is a $t$-structure on $\Db\A$ with heart $\A$, called the \emph{standard $t$-structure}, where
	\begin{align*}
		\Ul0&=\{ X \in \Db\A \mid \mathrm{H}^n(X)=0 \textrm{ for any } n > 0 \},\\
		\Ug0&=\{  X \in \Db\A \mid \mathrm{H}^n( X)=0\textrm{ for any } n < 0 \}.
	\end{align*}
	There are plenty of properties of $t$-structures, see \cite{BBD82,GM,KV88}. For any $t$-structure $\clU=(\Ul0,\Ug0)$ on $\D$, we have $\clU[n]=(\Ul{-n},\Ug{-n})$ is a $t$-structure on $\D$ for any integer $n$. We observe that both aisles and co-aisles of $t$-structures are closed under direct summands and extensions. Aisles are closed under cones and co-aisles are closed under cocones.  The inclusion $\Ul n \hookrightarrow \D$ admits a right adjoint $\tau_{\leqslant n}:\D \rightarrow \Ul n$, and the inclusion $\Ug n \hookrightarrow \D$ admits a left adjoint $\tau_{\geqslant n}:\D \rightarrow \Ug n$. They are called \emph{truncation functors} associated to $\clU$. Both aisles and co-aisles are closed under truncations.
	
	The following examples show that $s$-torsion pairs are a common generalization of $t$-structures on triangulated categories and torsion pairs in exact categories.
	
	\begin{Ex}
		\begin{enumerate}
			\item[]
			\item By regarding a triangulated category $\D$ as an extriangulated category with a negative first extension as before, a pair $(\X,\Y)$ of subcategories is an $s$-torsion pair in $\D$ if and only if $(\X,\Y[1])$ is a t-structure on $\D$.
			\item Let $\E$ be an exact category. By regarding $\E$ as an extriangulated category with a trivial negative first extension as \Cref{example2.3} (2), then the $s$-torsion pairs in $\E$ are exactly the torsion pairs in $\E$.
			\item Let $\Lambda$ be a hereditary algebra. Then $\mod \Lambda$ is an extriangulated category with a negative first extension by \Cref{example2.3} (3). A torsion pair $(\T,\F)$ is an $s$-torsion pair in $\mod \Lambda$ if and only if  $\T$ and $\F$ are Serre subcategories of $\mod \Lambda$; see \cite[Corollary 3.22]{AET23}.
		\end{enumerate}
	\end{Ex}
	
	\section{Intervals of torsion pairs in extriangulated categories}
	Throughout this section, let $\C=(\C,\mathbb{E},\mathfrak{s},\mathbb{E}^{-1})$ be an extriangulated category with a negative first extension. Denote by $\tors\C$ (resp. $\stors\C$) the set of all the torsion pairs (resp. $s$-torsion pairs) in $\C$. Let  $t_1=(\clU_1,\V_1)$ and $t_2=(\clU_2,\V_2)$ be two torsion pairs in $\C$. We define $t_1 \preccurlyeq t_2$ if $\C(\clU_1,\V_2)=0$ and $\mathbb{E}^{-1}(\clU_1,\V_2)=0$. In this case, we denote by $\tors[t_1,t_2]$  (resp. $\stors[t_1,t_2]$) the \emph{interval} of torsion pairs (resp. $s$-torsion pairs) in $\C$ consisting of $t=(\clU,\V)$ with $t_1 \preccurlyeq t \preccurlyeq t_2$.   We call the subcategory $\clH_{[t_1,t_2]}=\V_1\cap\clU_2$ the \emph{heart} of this interval. Since $\V_1$ and $\clU_2$ are extension-closed, $\clH_{[t_1,t_2]}$ is an extriangulated category with a negative first extension. 
	
	Note that the relation $\preccurlyeq$ defined above is transitive but not necessarily reflexive, hence it may not be a partial order in $\tors[t_1,t_2]$. In particular, if $t_1$ or $t_2$ is an $s$-torsion pair, then $t_1 \preccurlyeq t_2$ if and only if $\C(\clU_1,\V_2)=0$, that is, $\clU_1 \subseteq \clU_2$. Assume that both $t_1$ and $t_2$ are $s$-torsion pairs, then $\tors[t_1,t_2]$ consists of all the torsion pairs $t=(\clU,\V)$ in $\C$ such that $\clU_1 \subseteq \clU \subseteq \clU_2$, and $\stors\clH_{[t_1,t_2]}$ is a poset. 
	
	Before stating the main result of this section, we give the following lemmas to show the relations between the torsion pairs in $\C$ and those in $\clH_{[t_1,t_2]}$.
	
	\begin{Lemma}\label{lemma3.1}
		Let  $t_1=(\clU_1,\V_1)$ and $t_2=(\clU_2,\V_2)$ be two torsion pairs in $\C$ with $t_1 \preccurlyeq t_2$. Then $\clU_2= \clU_1 * (\clU_2\cap \V_1)$ and $\V_1 =  (\clU_2\cap \V_1) * \V_2$.
	\end{Lemma}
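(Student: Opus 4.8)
The plan is to prove the two decomposition statements $\clU_2 = \clU_1 * (\clU_2 \cap \V_1)$ and $\V_1 = (\clU_2 \cap \V_1) * \V_2$ by symmetric arguments, using the existence of torsion-pair decompositions coming from $t_1$ and $t_2$ together with the hypothesis $t_1 \preccurlyeq t_2$. I will spell out the first equality; the second follows dually (replacing $\C$ by $\C^{\mathrm{op}}$ interchanges the two torsion pairs and swaps the roles, so the symmetry is genuine).

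First I would establish the inclusion $\clU_1 * (\clU_2 \cap \V_1) \subseteq \clU_2$. Take $A \in \clU_1 * (\clU_2 \cap \V_1)$, so there is an $\mathbb{E}$-triangle $U \to A \to H \dashrightarrow$ with $U \in \clU_1$ and $H \in \clU_2 \cap \V_1$. Since $\clU_1 \subseteq \clU_2$ (this is where $t_1 \preccurlyeq t_2$ enters: the condition $\C(\clU_1,\V_2)=0$ together with the characterization $\clU_2 = \{A \mid \C(A,\V_2)=0\}$ gives $\clU_1 \subseteq \clU_2$), both $U$ and $H$ lie in $\clU_2$, and $\clU_2$ is closed under extensions, so $A \in \clU_2$.

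The reverse inclusion $\clU_2 \subseteq \clU_1 * (\clU_2 \cap \V_1)$ is the substantive step. Given $A \in \clU_2$, apply the torsion pair $t_1 = (\clU_1,\V_1)$ to obtain an $\mathbb{E}$-triangle
\[
U \xrightarrow{\ x\ } A \xrightarrow{\ y\ } V \dashrightarrow
\]
with $U \in \clU_1$ and $V \in \V_1$. It remains to show $V \in \clU_2$; then this very $\mathbb{E}$-triangle exhibits $A$ as an object of $\clU_1 * (\clU_2 \cap \V_1)$. To see $V \in \clU_2$, I use the characterization $\clU_2 = \{X \in \C \mid \C(X,\V_2)=0\}$, so it suffices to show $\C(V, W) = 0$ for every $W \in \V_2$. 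Apply the covariant part of the negative-extension long exact sequence (Definition 2.2(2)) to the $\mathbb{E}$-triangle $U \to A \to V \dashrightarrow$ in the second variable, or rather its contravariant analogue, to get the exact sequence
\[
\mathbb{E}^{-1}(U,W) \longrightarrow \C(V,W) \longrightarrow \C(A,W).
\]
Here $\C(A,W) = 0$ since $A \in \clU_2$ and $W \in \V_2$, and $\mathbb{E}^{-1}(U,W) = 0$ since $U \in \clU_1$, $W \in \V_2$, and $\mathbb{E}^{-1}(\clU_1,\V_2) = 0$ by $t_1 \preccurlyeq t_2$. Hence $\C(V,W) = 0$, so $V \in \clU_2$, and therefore $V \in \clU_2 \cap \V_1$ as wanted.

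The main obstacle, and the reason the definition of $\preccurlyeq$ is tailored as it is, is precisely controlling the torsionfree part $V$: the plain torsion-pair axioms only tell us $V \in \V_1$, and without the extra vanishing $\mathbb{E}^{-1}(\clU_1,\V_2)=0$ one cannot conclude $V \in \clU_2$. So the negative first extension is doing real work here. For the second equality $\V_1 = (\clU_2 \cap \V_1) * \V_2$, I would run the mirror argument: given $A \in \V_1$, decompose it via $t_2 = (\clU_2,\V_2)$ as $U' \to A \to V' \dashrightarrow$ with $U' \in \clU_2$, $V' \in \V_2$, and then show $U' \in \V_1$ using $\mathbb{E}^{-1}(\clU_1, V') = 0$ together with $\C(\clU_1, A) = 0$ and the appropriate long exact sequence in the first variable; the inclusion $(\clU_2 \cap \V_1) * \V_2 \subseteq \V_1$ again follows from $\V_1$ being extension-closed and $\V_2 \subseteq \V_1$ (which is the dual consequence of $t_1 \preccurlyeq t_2$).
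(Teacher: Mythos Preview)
Your proof is correct and follows essentially the same approach as the paper: decompose an object of $\clU_2$ via the torsion pair $t_1$, then use the contravariant exact sequence from the negative first extension together with $\mathbb{E}^{-1}(\clU_1,\V_2)=0$ and $\C(\clU_2,\V_2)=0$ to force the torsionfree part into $\clU_2$. The only differences are cosmetic---you treat the easy inclusion first and spell out why $\clU_1\subseteq\clU_2$, whereas the paper leaves this implicit.
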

	\begin{proof}
		We will show the first part of the assertion as the argument for the second part is analogous. For any $X \in \clU_2$, there is an $\mathbb{E}$-triangle
		$$ U_1 \rightarrow X \rightarrow V_1 \dashrightarrow$$
		with $U_1 \in \clU_1$ and $V_1 \in \V_1$. Applying $\C(-,\V_2)$, we obtain an exact sequence 
		$$ \mathbb{E}^{-1}(U_1,\V_2) \rightarrow \C(V_1,\V_2) \rightarrow \C(X,\V_2).$$ 
		Since $\mathbb{E}^{-1}(U_1,\V_2)=0$ and $\C(X,\V_2)=0$, we have $\C(V_1,\V_2)=0$ which implies that $V_1 \in \clU_2$. So $\clU_2 \subseteq \clU_1 * (\clU_2\cap \V_1)$. The converse inclusion follows from the facts that $\clU_1 \subseteq \clU_2$  and $\clU_2$ is closed under extensions. Thus $\clU_2 = \clU_1 * (\clU_2\cap \V_1)$.
	\end{proof}
	
	\begin{Lemma}\label{lemma3.2}
		Let $t=(\clU,\V)$ be a torsion pair in $\C$ with $t_1 \preccurlyeq t \preccurlyeq t_2$. Then the following holds.
		\begin{enumerate} 
			\item $(\clU \cap \V_1 , \V \cap \clU_2)$ is a torsion pair in $\clH_{[t_1,t_2]}$.
			\item $\clU_1*(\clU \cap \V_1)=\clU$ and $(\V \cap \clU_2)*\V_2=\V$.
		\end{enumerate}
	\end{Lemma}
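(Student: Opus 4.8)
The plan is to obtain part (2) as an immediate consequence of \Cref{lemma3.1}, and then to prove part (1) by taking the canonical $(\clU,\V)$-decomposition of an object of the heart $\clH_{[t_1,t_2]}=\V_1\cap\clU_2$ and checking, via the long exact sequences attached to the negative first extension, that both factors stay inside the heart.

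For part (2): applying \Cref{lemma3.1} to the relation $t_1\preccurlyeq t$ (whose heart is $\V_1\cap\clU$) yields $\clU=\clU_1*(\clU\cap\V_1)$, and applying it to $t\preccurlyeq t_2$ (whose heart is $\V\cap\clU_2$) yields $\V=(\V\cap\clU_2)*\V_2$. This is exactly part (2), so no further work is needed there.

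For part (1), I would first record two inclusions used throughout: $t\preccurlyeq t_2$ gives $\C(\clU,\V_2)=0$, hence $\clU\subseteq\clU_2$; and $t_1\preccurlyeq t$ gives $\C(\clU_1,\V)=0$, hence $\V\subseteq\V_1$. Therefore $\clU\cap\V_1\subseteq\clU_2\cap\V_1=\clH_{[t_1,t_2]}$ and $\V\cap\clU_2\subseteq\V_1\cap\clU_2=\clH_{[t_1,t_2]}$, and both are additive subcategories closed under direct sums and summands. Condition (TP1) is automatic: $\clH_{[t_1,t_2]}$ is a full subcategory, so $\clH_{[t_1,t_2]}(\clU\cap\V_1,\V\cap\clU_2)\subseteq\C(\clU,\V)=0$. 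For (TP2), given $X\in\clH_{[t_1,t_2]}$, use $\C=\clU*\V$ to get an $\mathbb{E}$-triangle $U\to X\to V\dashrightarrow$ with $U\in\clU$, $V\in\V$. Applying $\C(\clU_1,-)$ to it, exactness of $\mathbb{E}^{-1}(\clU_1,V)\to\C(\clU_1,U)\to\C(\clU_1,X)$ together with $\mathbb{E}^{-1}(\clU_1,V)=0$ (from $t_1\preccurlyeq t$) and $\C(\clU_1,X)=0$ (since $X\in\V_1$) forces $\C(\clU_1,U)=0$, i.e.\ $U\in\V_1$, so $U\in\clU\cap\V_1$. Applying $\C(-,\V_2)$, exactness of $\mathbb{E}^{-1}(U,\V_2)\to\C(V,\V_2)\to\C(X,\V_2)$ together with $\mathbb{E}^{-1}(U,\V_2)=0$ (from $t\preccurlyeq t_2$, since $U\in\clU$) and $\C(X,\V_2)=0$ (since $X\in\clU_2$) forces $\C(V,\V_2)=0$, i.e.\ $V\in\clU_2$, so $V\in\V\cap\clU_2$. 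Since $\clH_{[t_1,t_2]}$ is extension-closed in $\C$, this $\mathbb{E}$-triangle is an $\mathbb{E}$-triangle in $\clH_{[t_1,t_2]}$, witnessing $X\in(\clU\cap\V_1)*(\V\cap\clU_2)$; the reverse inclusion is trivial by extension-closedness.

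The one point requiring care — and the reason the definition of $\preccurlyeq$ carries the negative first extension conditions — is exactly the verification of (TP2): one needs $\mathbb{E}^{-1}(\clU_1,\V)=0$ and $\mathbb{E}^{-1}(\clU,\V_2)=0$ to guarantee that the $(\clU,\V)$-decomposition of an object of the heart does not escape the heart. Everything else is routine manipulation of the defining properties of torsion pairs, so I expect no serious obstacle beyond keeping track of which vanishing hypothesis is invoked where.
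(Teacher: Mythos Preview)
Your proposal is correct and follows essentially the same approach as the paper: part (2) is deduced directly from \Cref{lemma3.1}, and part (1) is verified by taking the $(\clU,\V)$-decomposition of an object in the heart and using the long exact sequences from the negative first extension to show both factors remain in the heart. Your write-up in fact makes explicit a few routine points (closure under summands, that the triangle lies in $\clH_{[t_1,t_2]}$) that the paper leaves implicit, but the argument is the same.
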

	\begin{proof}
		\par(1)  Since $\clH_{[t_1,t_2]}(\clU \cap \V_1 , \V \cap \clU_2) \subseteq \C(\clU,\V) =0$, (TP1) holds. To verify (TP2), take any $X \in \clH_{[t_1,t_2]}$. There exists an $\mathbb{E}$-triangle
		$$U{\rightarrow}X{\rightarrow}V\dashrightarrow$$
		with $U \in \clU$ and $V \in \V$. Applying $\C(-,\V_2)$ yields an exact sequence:
		$$\mathbb{E}^{-1}(U,\V_2)\rightarrow \C(V,\V_2)\rightarrow \C(X,\V_2).$$
		Since $\mathbb{E}^{-1}(\clU,\V_2)=0$ and $\C(\clU_2,\V_2)=0$, we get $\C(V,\V_2)=0$, that is, $V \in \clU_2$. Hence $V \in \V\cap \clU_2$. Similarly, $U \in \clU \cap \V_1 $. Thus $(\clU \cap \V_1 , \V \cap \clU_2)$ is a torsion pair in $\clH_{[t_1,t_2]}$.
		\par(2) holds by \Cref{lemma3.1}.
	\end{proof}
	
	\begin{Lemma}\label{lemma3.3}
		There exists a map 
		\begin{equation}\label{eq3.1}
			\begin{aligned}
				\Phi:\tors[t_1,t_2]&\rightarrow\tors\clH_{[t_1,t_2]}\\
				(\clU,\V)&\mapsto(\clU \cap \V_1 , \V \cap \clU_2)
			\end{aligned}
		\end{equation} 
		satisfying the following properties.
		\begin{enumerate}
			\item  By restriction, there is a map $\stors[t_1,t_2]\rightarrow\stors\clH_{[t_1,t_2]}$.
			\item $\Phi$ preserves $\preccurlyeq$. That is, for any  $a_1=(\X_1,\Y_1),\ a_2=(\X_2,\Y_2)\in \tors[t_1,t_2]$, if $a_1 \preccurlyeq a_2$, then $\Phi(a_1) \preccurlyeq \Phi(a_2)$.
			\item $\Phi$ preserves the hearts. That is, $\clH_{[a_1,a_2]}=\clH_{[\Phi(a_1),\Phi(a_2)]}$.
		\end{enumerate}
	\end{Lemma}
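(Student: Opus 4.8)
The map $\Phi$ is well defined by \Cref{lemma3.2}(1), which already identifies $(\clU\cap\V_1,\V\cap\clU_2)$ as a torsion pair in $\clH_{[t_1,t_2]}$ whenever $t_1\preccurlyeq(\clU,\V)\preccurlyeq t_2$. The plan is then to verify (1)--(3) in order, each reducing to a short argument once two facts are in place: for a torsion pair $(\clU,\V)$ in $\C$ one has $\clU=\{A\mid\C(A,\V)=0\}$ and $\V=\{A\mid\C(\clU,A)=0\}$, and, by \Cref{example2.3}(4), the extension-closed subcategory $\clH_{[t_1,t_2]}$ carries the negative first extension obtained by restriction, so that $\mathbb{E}^{-1}$ taken inside $\clH_{[t_1,t_2]}$ agrees with $\mathbb{E}^{-1}$ in $\C$ on objects of the heart. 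Granting this, (1) is immediate: if $t=(\clU,\V)$ satisfies (STP) then, since $\clU\cap\V_1\subseteq\clU$ and $\V\cap\clU_2\subseteq\V$, bifunctoriality of $\mathbb{E}^{-1}$ forces $\mathbb{E}^{-1}(\clU\cap\V_1,\V\cap\clU_2)=0$, which is exactly (STP) for $\Phi(t)$; hence $\Phi$ restricts to a map $\stors[t_1,t_2]\to\stors\clH_{[t_1,t_2]}$.

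For (2), I would write $\Phi(a_i)=(\X_i\cap\V_1,\ \Y_i\cap\clU_2)$ for $i=1,2$. Unravelling the definition, $a_1\preccurlyeq a_2$ means $\C(\X_1,\Y_2)=0$ and $\mathbb{E}^{-1}(\X_1,\Y_2)=0$; since $\X_1\cap\V_1\subseteq\X_1$ and $\Y_2\cap\clU_2\subseteq\Y_2$, the same two vanishings hold for these smaller subcategories, i.e. $\clH_{[t_1,t_2]}(\X_1\cap\V_1,\Y_2\cap\clU_2)=0$ and $\mathbb{E}^{-1}(\X_1\cap\V_1,\Y_2\cap\clU_2)=0$, which is precisely $\Phi(a_1)\preccurlyeq\Phi(a_2)$. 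For (3) (where $a_1\preccurlyeq a_2$ is assumed, so $\clH_{[a_1,a_2]}$ makes sense, and $\clH_{[\Phi(a_1),\Phi(a_2)]}$ makes sense by (2)), I would compute both hearts directly: $\clH_{[a_1,a_2]}=\Y_1\cap\X_2$ while $\clH_{[\Phi(a_1),\Phi(a_2)]}=(\Y_1\cap\clU_2)\cap(\X_2\cap\V_1)$. The inclusion $\supseteq$ is trivial, and for $\subseteq$ it suffices to show $\X_2\subseteq\clU_2$ and $\Y_1\subseteq\V_1$: the former follows from $a_2\preccurlyeq t_2$ via $\C(\X_2,\V_2)=0$ and the description of $\clU_2$, and the latter from $t_1\preccurlyeq a_1$ via $\C(\clU_1,\Y_1)=0$ and the description of $\V_1$.

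I do not expect a genuine obstacle. The only step that is not pure symbol manipulation is the compatibility of $\mathbb{E}^{-1}$ with passing to the extension-closed subcategory $\clH_{[t_1,t_2]}$; once that is recorded, every vanishing of $\C(-,-)$ or $\mathbb{E}^{-1}(-,-)$ in $\C$ restricts to the heart along the subcategory inclusions, and heart-preservation is just the set-theoretic identity $\Y_1\cap\clU_2\cap\X_2\cap\V_1=\Y_1\cap\X_2$ made valid by $\X_2\subseteq\clU_2$ and $\Y_1\subseteq\V_1$.
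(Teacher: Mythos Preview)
Your proposal is correct and follows essentially the same route as the paper: well-definedness via \Cref{lemma3.2}, (1) and (2) by the obvious inclusions $\clU\cap\V_1\subseteq\clU$, $\V\cap\clU_2\subseteq\V$ (and likewise for $\X_i,\Y_i$) applied to $\C(-,-)$ and $\mathbb{E}^{-1}(-,-)$, and (3) by reducing the intersection to $\X_2\cap\Y_1$ using $\X_2\subseteq\clU_2$ and $\Y_1\subseteq\V_1$. The only difference is cosmetic: the paper records (3) as the one-line chain $(\X_2\cap\V_1)\cap(\Y_1\cap\clU_2)=(\X_2\cap\clU_2)\cap(\Y_1\cap\V_1)=\X_2\cap\Y_1$ without spelling out why $\X_2\subseteq\clU_2$ and $\Y_1\subseteq\V_1$, whereas you make that step explicit.
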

	\begin{proof} 
		By \Cref{lemma3.2}, $\Phi$ is well defined.
		\par (1) It is sufficient to verify (STP).  Indeed, it follows from $\mathbb{E}^{-1}(\clU \cap \V_1 , \V \cap \clU_2)\subseteq\mathbb{E}^{-1}(\clU,\V)=0$.
		\par (2) It is obvious that $\C(\X_1\cap\V_1,\Y_2\cap\clU_2)\subseteq \C(\X_1,\Y_2)=0$ and $\mathbb{E}^{-1}(\X_1\cap\V_1,\Y_2\cap\clU_2)\subseteq \mathbb{E}^{-1}(\X_1,\Y_2)=0$. Hence $\Phi(a_1) \preccurlyeq \Phi(a_2)$.
		\par (3) $\clH_{[\Phi(a_1),\Phi(a_2)]}=(\X_2\cap\V_1)\cap(\Y_1\cap\clU_2)=(\X_2\cap\clU_2)\cap(\Y_1\cap\V_1)=\X_2\cap\Y_1=\clH_{[a_1,a_2]}$.
	\end{proof}
	
	\begin{Lemma}\label{lemma3.4}
		Let $(\T,\F)$ be a torsion pair in $\clH_{[t_1,t_2]}$. Then the following holds.
		\begin{enumerate}
			\item $t=(\clU_1 * \T, \F * \V_2)$ is a torsion pair in $\C$.
			\item $(\clU_1 * \T) \cap \V_1 = \T$ and $(\F * \V_2) \cap \clU_2 = \F$.
			\item $t_1 \preccurlyeq t \preccurlyeq t_2$ if and only if $\mathbb{E}^{-1}(\T,\V_2)=0$ and $\mathbb{E}^{-1}(\clU_1,\F)=0$.
		\end{enumerate}
	\end{Lemma}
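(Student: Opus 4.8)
The plan is to treat the three parts in turn, and throughout I will use freely that $\T,\F\subseteq\clH_{[t_1,t_2]}=\V_1\cap\clU_2$, that $\T*\F=\clH_{[t_1,t_2]}$ (since $(\T,\F)$ is a torsion pair in $\clH_{[t_1,t_2]}$, so its two classes $*$-decompose it), and that $\C(\clU_1,\T)=\C(\clU_1,\F)=0$ (because $\T,\F\subseteq\V_1$ and $(\clU_1,\V_1)$ is a torsion pair). I also note $\clH_{[t_1,t_2]}(\T,\F)=\C(\T,\F)=0$ by (TP1) for $(\T,\F)$.

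For part (1), I would verify (TP1) and (TP2) for $t=(\clU_1*\T,\F*\V_2)$; the closure of $\clU_1*\T$ and $\F*\V_2$ under direct sums and direct summands is routine, following from the corresponding properties of $\clU_1,\T,\F,\V_2$ together with $\C(\clU_1,\T)=0$ (which makes the truncation triangles unique). For (TP2), associativity of $*$ and \Cref{lemma3.1} give
\[(\clU_1*\T)*(\F*\V_2)=\clU_1*(\T*\F)*\V_2=\clU_1*\clH_{[t_1,t_2]}*\V_2=\clU_2*\V_2=\C.\]
For (TP1), take $A\in\clU_1*\T$ with $\mathbb{E}$-triangle $U_1\to A\to T\dashrightarrow$ and $B\in\F*\V_2$ with $\mathbb{E}$-triangle $F\to B\to V_2\dashrightarrow$. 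Applying $\C(-,B)$ to the first triangle reduces $\C(A,B)=0$ to $\C(U_1,B)=0$ and $\C(T,B)=0$, and applying $\C(U_1,-)$ resp. $\C(T,-)$ to the second reduces these in turn to the vanishings $\C(\clU_1,\F)=0$, $\C(\clU_1,\V_2)=0$, $\C(\T,\F)=0$, $\C(\T,\V_2)=0$ — all of which hold (the second by $t_1\preccurlyeq t_2$, the fourth since $\T\subseteq\clU_2$).

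For part (2), \Cref{lemma3.1} gives $\clU_1*\T\subseteq\clU_1*\clH_{[t_1,t_2]}=\clU_2$ and $\F*\V_2\subseteq\clH_{[t_1,t_2]}*\V_2=\V_1$, so $(\clU_1*\T)\cap\V_1$ and $(\F*\V_2)\cap\clU_2$ are both contained in $\clH_{[t_1,t_2]}$; the inclusions $\T\subseteq(\clU_1*\T)\cap\V_1$ and $\F\subseteq(\F*\V_2)\cap\clU_2$ are immediate. For the reverse inclusions I use the characterization of $\T$ and $\F$ inside $\clH_{[t_1,t_2]}$: given $X\in(\clU_1*\T)\cap\V_1$ with $\mathbb{E}$-triangle $U_1\to X\to T'\dashrightarrow$ ($T'\in\T$), applying $\C(-,\F)$ and using $\C(\clU_1,\F)=0$ and $\C(\T,\F)=0$ shows $\C(X,\F)=0$, whence $X\in\T$; dually, for $X\in(\F*\V_2)\cap\clU_2$ with $\mathbb{E}$-triangle $F'\to X\to V_2'\dashrightarrow$ ($F'\in\F$), the vanishings $\C(\T,F')=0$ and $\C(\T,\V_2')=0$ (the latter since $\T\subseteq\clU_2$) give $\C(\T,X)=0$, whence $X\in\F$.

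For part (3), the same Hom computations as in the proof of (1) show that $\C(\clU_1,\F*\V_2)=0$ and $\C(\clU_1*\T,\V_2)=0$ hold unconditionally, so $t_1\preccurlyeq t\preccurlyeq t_2$ is equivalent to the pair of vanishings $\mathbb{E}^{-1}(\clU_1,\F*\V_2)=0$ and $\mathbb{E}^{-1}(\clU_1*\T,\V_2)=0$. Feeding the $\mathbb{E}$-triangle $F\to B\to V_2\dashrightarrow$ into the first exact sequence of \Cref{definition2.2} and using $\mathbb{E}^{-1}(\clU_1,\V_2)=0$ exhibits $\mathbb{E}^{-1}(\clU_1,B)$ as a quotient of $\mathbb{E}^{-1}(\clU_1,F)$; since also $\mathbb{E}^{-1}(\clU_1,\F)\subseteq\mathbb{E}^{-1}(\clU_1,\F*\V_2)$ (take the $\V_2$-term zero), we get $\mathbb{E}^{-1}(\clU_1,\F*\V_2)=0\iff\mathbb{E}^{-1}(\clU_1,\F)=0$. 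Symmetrically, feeding $U_1\to A\to T\dashrightarrow$ into the second exact sequence of \Cref{definition2.2} and again using $\mathbb{E}^{-1}(\clU_1,\V_2)=0$ gives $\mathbb{E}^{-1}(\clU_1*\T,\V_2)=0\iff\mathbb{E}^{-1}(\T,\V_2)=0$; combining the two equivalences proves the claim. The only mildly delicate points are the closure under direct summands in (1) and orienting the two exact sequences of \Cref{definition2.2} correctly in (3) so that the relevant groups really do arise as quotients of $\mathbb{E}^{-1}(\clU_1,\F)$ and $\mathbb{E}^{-1}(\T,\V_2)$; everything else is diagram chasing with the long exact sequences attached to $\mathbb{E}$-triangles.
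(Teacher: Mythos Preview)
Your proof is correct and follows essentially the same approach as the paper's: verifying (TP1) and (TP2) via \Cref{lemma3.1} for (1), using the Hom-orthogonality characterization of $\T$ and $\F$ inside $\clH_{[t_1,t_2]}$ for (2), and reducing the $\mathbb{E}^{-1}$-vanishings via the long exact sequences of \Cref{definition2.2} for (3). You are somewhat more explicit than the paper (e.g.\ spelling out the four Hom-vanishings for (TP1), checking $(\clU_1*\T)\cap\V_1\subseteq\clH_{[t_1,t_2]}$ before applying the torsion-pair characterization, and flagging closure under direct summands), but the route is the same.
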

	\begin{proof}
		\par (1)  It is clear that $\C(\clU_1 * \T, \F * \V_2) = 0$. By \Cref{lemma3.1}, $\C = \clU_1 * \V_1 = \clU_1 * \clH_{[t_1,t_2]} * \V_2= (\clU_1*\T)*(\F*\V_2)$. Hence $(\clU_1 * \T, \F * \V_2)$ is a torsion pair in $\C$.
		\par (2) We only prove $(\clU_1*\T)\cap\V_1=\T$, the other equation is similar. Since $\T \subseteq \clH_{[t_1,t_2]} \subseteq \V_1$ and $\T \subseteq \clU_1*\T$, we have $\T \subseteq (\clU_1*\T)\cap\V_1$. Conversely, since $\clH_{[t_1,t_2]} ((\clU_1*\T)\cap \V_1,\F)\subseteq\C (\clU_1*\T,\F)$=0 by $\C(\clU_1,\F)\subseteq \C(\clU_1,\V_1)=0$ and $\C(\T,\F)=0$, we have  $(\clU_1*\T)\cap \V_1\subseteq \T$. Hence, $(\clU_1*\T)\cap \V_1= \T$.
		\par (3) Suppose $t_1 \preccurlyeq t$. By definition, we have $ \mathbb{E}^{-1}(\clU_1,\F*\V_2)=0$, hence $\mathbb{E}^{-1}(\clU_1,\F)=0$. Similarly, if $t \preccurlyeq t_2$, then $\mathbb{E}^{-1}(\T,\V_2)=0$.
		\par Suppose $\mathbb{E}^{-1}(\T,\V_2)=0$ and $\mathbb{E}^{-1}(\clU_1,\F)=0$. It is obvious that $\C(\clU_1,\F * \V_2) = 0$ and $\C( \clU_1 * \T, \V_2) = 0$. Since $\mathbb{E}^{-1}(\clU_1, \F) =0, \mathbb{E}^{-1}(\clU_1, \V_2) =0$ and $\mathbb{E}^{-1}(\T, \V_2) = 0$, we have $\mathbb{E}^{-1}(\clU_1,\F * \V_2)=0$ and $\mathbb{E}^{-1}( \clU_1 * \T, \V_2)=0$. Hence, $t_1 \preccurlyeq t \preccurlyeq t_2$.
	\end{proof}
	
	To establish a precise correspondence between the torsion pairs in $\C$ and the torsion pairs in $\clH_{[t_1,t_2]}$, we introduce the following notation.
	
	\begin{Not}
		Let  $t_1=(\clU_1,\V_1)$ and $t_2=(\clU_2,\V_2)$ be two torsion pairs in $\C$ with $t_1 \preccurlyeq t_2$ and $\clH_{[t_1,t_2]}=\clU_2\cap \V_1$. We define $$\widetilde{\tors}\clH_{[t_1,t_2]}=\{(\T,\F) \in \tors \clH_{[t_1,t_2]} \mid \mathbb{E}^{-1}(\T,\V_2)=0 \text{ and } \mathbb{E}^{-1}(\clU_1,\F)=0\}$$
		and $$\widetilde{\stors}\clH_{[t_1,t_2]}=\widetilde{\tors}\clH_{[t_1,t_2]} \cap\stors \clH_{[t_1,t_2]} .$$
	\end{Not}
	
	\begin{Remark}\label{re3.6}
		If $t_1$ and $t_2$ are $s$-torsion pairs, then for any torsion pair $(\T,\F)$ in $\clH_{[t_1,t_2]}$, we have $\mathbb{E}^{-1}(\T,\V_2)=0$ and $\mathbb{E}^{-1}(\clU_1,\F)=0$ since $\T \subseteq \clU_2$ and $\F \subseteq \V_1$. That is, if $t_1$ and $t_2$ are $s$-torsion pairs, then $\widetilde{\tors}\clH_{[t_1,t_2]}=\tors\clH_{[t_1,t_2]}$ and $\widetilde{\stors}\clH_{[t_1,t_2]}=\stors\clH_{[t_1,t_2]}$.
	\end{Remark}
	
	\begin{Lemma}\label{lemma3.7}
		There exists a map 
		\begin{equation}\label{eq3.2}
			\begin{aligned}
				\Psi:\widetilde{\tors}\clH_{[t_1,t_2]}&\rightarrow\tors[t_1,t_2]\\
				(\T,\F)&\mapsto(\clU_1 * \T , \F * \V_2)
			\end{aligned}
		\end{equation} 
		satisfying the following properties.
		\begin{enumerate}
			\item  By restriction, there is a map $\widetilde{\stors}\clH_{[t_1,t_2]}\rightarrow\stors[t_1,t_2]$.
			\item  $\Psi$ preserves $\preccurlyeq$. That is, for any  $b_1=(\T_1,\F_1),\ b_2=(\T_2,\F_2)\in \widetilde{\tors}\clH_{[t_1,t_2]}$, if $b_1 \preccurlyeq b_2$, then $\Psi(b_1) \preccurlyeq \Psi(b_2)$.
			\item $\Psi$ preserves the hearts. That is, $\clH_{[b_1,b_2]}=\clH_{[\Psi(b_1),\Psi(b_2)]}$.
		\end{enumerate}
	\end{Lemma}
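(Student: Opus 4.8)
The plan is to follow the proof of \Cref{lemma3.3} step by step, with \Cref{lemma3.4} playing the role that \Cref{lemma3.2} played there. First, \Cref{lemma3.4}(1) shows that $(\clU_1 * \T, \F * \V_2)$ is a torsion pair in $\C$ for every $(\T,\F)\in\widetilde{\tors}\clH_{[t_1,t_2]}$, while \Cref{lemma3.4}(3), combined with the defining conditions $\mathbb{E}^{-1}(\T,\V_2)=0$ and $\mathbb{E}^{-1}(\clU_1,\F)=0$ of $\widetilde{\tors}\clH_{[t_1,t_2]}$, shows that this torsion pair lies in $\tors[t_1,t_2]$. Hence $\Psi$ is well defined.

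For part (1), given $(\T,\F)\in\widetilde{\stors}\clH_{[t_1,t_2]}$, it remains to check (STP) for $\Psi(\T,\F)=(\clU_1*\T,\F*\V_2)$, that is, $\mathbb{E}^{-1}(\clU_1*\T,\F*\V_2)=0$. I would derive this from the four vanishings $\mathbb{E}^{-1}(\clU_1,\F)=0$ and $\mathbb{E}^{-1}(\T,\V_2)=0$ (from the definition of $\widetilde{\tors}\clH_{[t_1,t_2]}$), $\mathbb{E}^{-1}(\clU_1,\V_2)=0$ (from $t_1\preccurlyeq t_2$), and $\mathbb{E}^{-1}(\T,\F)=0$ (from (STP) for $(\T,\F)$ in $\clH_{[t_1,t_2]}$, using that by \Cref{example2.3}(4) its negative first extension is the restriction of $\mathbb{E}^{-1}$). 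Feeding an $\mathbb{E}$-triangle $F\to Y\to V\dashrightarrow$ with $F\in\F$ and $V\in\V_2$ into the first exact sequence of \Cref{definition2.2} gives $\mathbb{E}^{-1}(\clU_1,\F*\V_2)=0$ and $\mathbb{E}^{-1}(\T,\F*\V_2)=0$, and then feeding an $\mathbb{E}$-triangle $U\to X\to T\dashrightarrow$ with $U\in\clU_1$ and $T\in\T$ into the second exact sequence gives $\mathbb{E}^{-1}(\clU_1*\T,\F*\V_2)=0$.

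For part (2), let $b_1=(\T_1,\F_1)$ and $b_2=(\T_2,\F_2)$ lie in $\widetilde{\tors}\clH_{[t_1,t_2]}$ with $b_1\preccurlyeq b_2$; I must show $\C(\clU_1*\T_1,\F_2*\V_2)=0$ and $\mathbb{E}^{-1}(\clU_1*\T_1,\F_2*\V_2)=0$. Both follow by the same two-step procedure as in part (1): the Hom-vanishing starts from $\C(\clU_1,\F_2)=0$ (as $\F_2\subseteq\clH_{[t_1,t_2]}\subseteq\V_1$), $\C(\clU_1,\V_2)=0$, $\C(\T_1,\V_2)=0$ (as $\T_1\subseteq\clU_2$), and $\C(\T_1,\F_2)=0$ (the Hom half of $b_1\preccurlyeq b_2$), while the $\mathbb{E}^{-1}$-vanishing starts from $\mathbb{E}^{-1}(\clU_1,\F_2)=0$ (from $b_2\in\widetilde{\tors}\clH_{[t_1,t_2]}$), $\mathbb{E}^{-1}(\clU_1,\V_2)=0$, $\mathbb{E}^{-1}(\T_1,\V_2)=0$ (from $b_1\in\widetilde{\tors}\clH_{[t_1,t_2]}$), and $\mathbb{E}^{-1}(\T_1,\F_2)=0$ (the $\mathbb{E}^{-1}$ half of $b_1\preccurlyeq b_2$), again via the identification of the Hom- and $\mathbb{E}^{-1}$-groups in $\clH_{[t_1,t_2]}$ with those in $\C$. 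Hence $\Psi(b_1)\preccurlyeq\Psi(b_2)$.

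For part (3), rather than compute $(\clU_1*\T_2)\cap(\F_1*\V_2)$ directly, I would note that $\Phi\circ\Psi=\id$: indeed $\Phi(\Psi(\T,\F))=\bigl((\clU_1*\T)\cap\V_1,\,(\F*\V_2)\cap\clU_2\bigr)=(\T,\F)$ by \Cref{lemma3.4}(2). Since $\Psi(b_1),\Psi(b_2)\in\tors[t_1,t_2]$, applying \Cref{lemma3.3}(3) to $a_i:=\Psi(b_i)$ yields $\clH_{[\Psi(b_1),\Psi(b_2)]}=\clH_{[\Phi(\Psi(b_1)),\Phi(\Psi(b_2))]}=\clH_{[b_1,b_2]}$, as desired. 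The computations are routine iterations of the long exact sequences of \Cref{definition2.2}; the only delicate point is bookkeeping which vanishing is supplied by which hypothesis together with the harmless identification of Hom- and $\mathbb{E}^{-1}$-groups inside $\clH_{[t_1,t_2]}$ with those in $\C$, and there is no substantial obstacle.
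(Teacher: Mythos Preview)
Your proof is correct. For well-definedness and part (1) it is essentially identical to the paper's. For part (2) the paper shortens the Hom-vanishing by noting that $\C(\T_1,\F_2)=0$ forces $\T_1\subseteq\T_2$, hence $\clU_1*\T_1\subseteq\clU_1*\T_2$, so $\C(\clU_1*\T_1,\F_2*\V_2)=0$ follows immediately from $(\clU_1*\T_2,\F_2*\V_2)$ being a torsion pair; your four-vanishings route is equally valid and only marginally longer. The genuine difference is part (3): the paper computes $(\clU_1*\T_2)\cap(\F_1*\V_2)$ directly, using \Cref{lemma3.4}(2) together with $\F_1*\V_2\subseteq\V_1$ and $\clU_1*\T_2\subseteq\clU_2$ from \Cref{lemma3.1} to squeeze it down to $\T_2\cap\F_1$, whereas you bypass this computation by invoking $\Phi\circ\Psi=\id$ (from \Cref{lemma3.4}(2)) and then applying \Cref{lemma3.3}(3) to $a_i=\Psi(b_i)$. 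Your route is more conceptual and avoids redoing what is essentially the dual of the calculation already carried out in \Cref{lemma3.3}(3); the paper's route is self-contained and yields the explicit identity $(\clU_1*\T_2)\cap(\F_1*\V_2)=\T_2\cap\F_1$, which may be of independent use.
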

	\begin{proof}
		
		By \Cref{lemma3.4},  $\Psi$ is well defined. 
		\par (1) It is sufficient to verify (STP). Since $\mathbb{E}^{-1}(\T,\F)=0$, $\mathbb{E}^{-1}(\clU_1,\V_2)=0$, $\mathbb{E}^{-1}(\T,\V_2)=0$ and $\mathbb{E}^{-1}(\clU_1,\F)=0$, we have $\mathbb{E}^{-1}(\clU_1 * \T, \F * \V_2)=0$.
		\par(2) Since $\clU_1*\T_1\subseteq \clU_1*\T_2$, it follows that $\C(\clU_1*\T_1,\F_2*\V_2)=0$. Since $\mathbb{E}^{-1}(\clU_1,\F_2)=0$, $\mathbb{E}^{-1}(\clU_1,\V_2)=0$, $\mathbb{E}^{-1}(\T_1,\V_2)=0$ and $\mathbb{E}^{-1}(\T_1,\F_2)=0$, it follows that $\mathbb{E}^{-1}(\clU_1*\T_1,\F_2*\V_2)=0$. Hence $\Psi(b_1)\preccurlyeq\Psi(b_2)$.
		\par(3) $\clH_{[\Psi(b_1),\Psi(b_2)]}=(\clU_1*\T_2)\cap(\F_1*\V_2)$. It follows from \Cref{lemma3.4} and \Cref{lemma3.1} that $(\clU_1*\T_2)\cap(\F_1*\V_2)\subseteq (\clU_1*\T_2)\cap\V_1=\T_2$ and $(\clU_1*\T_2)\cap(\F_1*\V_2)\subseteq \clU_2\cap(\F_1*\V_2)=\F_1$. Hence $(\clU_1*\T_2)\cap(\F_1*\V_2)\subseteq \T_2\cap \F_1$. It is clear that $(\clU_1*\T_2)\cap(\F_1*\V_2)\supseteq \T_2\cap \F_1$. Thus $\T_2\cap\F_1=(\clU_1*\T_2)\cap(\F_1*\V_2)$.
	\end{proof}
	
	Now we can establish our main result of this section.
	
	\begin{Thm}\label{theorem3.8}
		Let  $t_1=(\clU_1,\V_1)$ and $t_2=(\clU_2,\V_2)$ be two torsion pairs in $\C$ with $t_1 \preccurlyeq t_2$ and $\clH_{[t_1,t_2]}=\clU_2\cap \V_1$. Then there are order preserving, mutually inverse bijections
		$$\xymatrix@R=15pt{\tors[t_1,t_2] \ar@<0.5ex>[rr]^{\Phi\,\ } && 
			\widetilde{\tors}\clH_{[t_1,t_2]}, \ar@<0.5ex>[ll]^{\Psi\,\ }	}$$
		$$\xymatrix@R=15pt{\stors[t_1,t_2] \ar@<0.5ex>[rr]^{\Phi\,\ } && 
			\widetilde{\stors}\clH_{[t_1,t_2]} \ar@<0.5ex>[ll]^{\Psi\,\ }	}$$
		given by 
		$$	\Phi(\clU,\V)=(\clU \cap \V_1,\V \cap \clU_2),\ \Psi(\T,\F)=(\clU_1*\T,\F*\V_2).$$
	\end{Thm}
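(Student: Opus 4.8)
The plan is to assemble the statement directly from \Cref{lemma3.2}, \Cref{lemma3.3}, \Cref{lemma3.4}, and \Cref{lemma3.7}, which between them already contain all the non-formal content. \Cref{lemma3.3} provides a well-defined map $\Phi\colon\tors[t_1,t_2]\to\tors\clH_{[t_1,t_2]}$ that restricts to $s$-torsion pairs and preserves $\preccurlyeq$, and \Cref{lemma3.7} provides $\Psi\colon\widetilde{\tors}\clH_{[t_1,t_2]}\to\tors[t_1,t_2]$ with the analogous properties. So the tasks that remain are: (a) to upgrade the codomain of $\Phi$ from $\tors\clH_{[t_1,t_2]}$ to $\widetilde{\tors}\clH_{[t_1,t_2]}$; and (b) to check that $\Phi$ and $\Psi$ are mutually inverse, after which the stated explicit formulas and the restriction to the $s$-torsion subsets follow at once.

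For (a), I would take $(\clU,\V)\in\tors[t_1,t_2]$ and exploit the interval hypothesis: $t\preccurlyeq t_2$ gives $\mathbb{E}^{-1}(\clU,\V_2)=0$, so $\mathbb{E}^{-1}(\clU\cap\V_1,\V_2)=0$ because $\clU\cap\V_1\subseteq\clU$; symmetrically, $t_1\preccurlyeq t$ gives $\mathbb{E}^{-1}(\clU_1,\V)=0$, so $\mathbb{E}^{-1}(\clU_1,\V\cap\clU_2)=0$. These are exactly the two vanishing conditions that cut $\widetilde{\tors}\clH_{[t_1,t_2]}$ out of $\tors\clH_{[t_1,t_2]}$, hence $\Phi(\clU,\V)\in\widetilde{\tors}\clH_{[t_1,t_2]}$, and the $s$-torsion refinement is \Cref{lemma3.3}(1). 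For (b), the two composites are bookkeeping once the lemmas are available: given $(\clU,\V)\in\tors[t_1,t_2]$ we have $\Psi\Phi(\clU,\V)=\bigl(\clU_1*(\clU\cap\V_1),\,(\V\cap\clU_2)*\V_2\bigr)$, which equals $(\clU,\V)$ by \Cref{lemma3.2}(2); and given $(\T,\F)\in\widetilde{\tors}\clH_{[t_1,t_2]}$ we have $\Phi\Psi(\T,\F)=\bigl((\clU_1*\T)\cap\V_1,\,(\F*\V_2)\cap\clU_2\bigr)$, which equals $(\T,\F)$ by \Cref{lemma3.4}(2). The same two identities hold verbatim on the $s$-torsion subsets via the restricted maps of \Cref{lemma3.3}(1) and \Cref{lemma3.7}(1), and when $t_1,t_2$ are themselves $s$-torsion pairs one invokes \Cref{re3.6} to see that $\widetilde{\tors}$ and $\widetilde{\stors}$ coincide with $\tors$ and $\stors$.

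Finally, since $\Phi$ and $\Psi$ both preserve $\preccurlyeq$ by \Cref{lemma3.3}(2) and \Cref{lemma3.7}(2) and are mutually inverse, they furnish isomorphisms of the relevant posets whenever $\preccurlyeq$ restricts there to a partial order, in particular in the $s$-torsion situation described just before the theorem. The only step demanding any thought — and thus the main obstacle — is the identification in (a): one must notice that the interval condition $t_1\preccurlyeq t\preccurlyeq t_2$ is equivalent to the defining conditions of $\widetilde{\tors}\clH_{[t_1,t_2]}$, since it is precisely this match (already foreshadowed by \Cref{lemma3.4}(3)) that makes $\Phi$ and $\Psi$ two-sided inverses rather than merely a section–retraction pair. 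Everything else is a direct appeal to the preparatory lemmas.
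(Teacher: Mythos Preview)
Your proposal is correct and takes essentially the same approach as the paper, which simply cites \Cref{lemma3.2}, \Cref{lemma3.3}, \Cref{lemma3.4}, and \Cref{lemma3.7} without further comment. You have accurately unpacked the one implicit step---that $\Phi$ lands in $\widetilde{\tors}\clH_{[t_1,t_2]}$ rather than merely $\tors\clH_{[t_1,t_2]}$---and correctly identified \Cref{lemma3.2}(2) and \Cref{lemma3.4}(2) as the sources of the two mutual-inverse identities.
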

	\begin{proof}
		It follows from \Cref{lemma3.2}, \Cref{lemma3.3}, \Cref{lemma3.4} and \Cref{lemma3.7}.
	\end{proof}	
	
	In particular, if  $t_1$ and $t_2$ are two $s$-torsion pairs, by \Cref{re3.6} we have the following corollary, which  generalizes \cite[Theorem 3.9]{AET23}.
	
	\begin{Coro}\label{corollary3.9}
		Let $t_1=(\clU_1,\V_1)$ and $t_2=(\clU_2,\V_2)$ be two $s$-torsion pairs such that $\clU_1 \subseteq \clU_2$. Then there are order preserving, mutually inverse bijections
		$$\xymatrix@R=15pt{\tors[t_1,t_2] \ar@<0.5ex>[rr]^{\Phi\,\ } && \tors \clH_{[t_1,t_2]} \ar@<0.5ex>[ll]^{\Psi\,\ }	} \textrm{ and }\xymatrix@R=15pt{\stors[t_1,t_2] \ar@<0.5ex>[rr]^{\Phi\,\ } && \stors \clH_{[t_1,t_2]}, \ar@<0.5ex>[ll]^{\Psi\,\ }	}$$
		where the maps are given by \eqref{eq3.1} and \eqref{eq3.2}.
	\end{Coro}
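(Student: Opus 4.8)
The plan is to obtain this corollary as an immediate specialization of \Cref{theorem3.8}, so the only real work is to check two things: that the extra hypothesis that $t_1$ and $t_2$ are $s$-torsion pairs collapses the decorated sets $\widetilde{\tors}\clH_{[t_1,t_2]}$ and $\widetilde{\stors}\clH_{[t_1,t_2]}$ onto the full sets $\tors\clH_{[t_1,t_2]}$ and $\stors\clH_{[t_1,t_2]}$, and that the hypothesis $\clU_1 \subseteq \clU_2$ is precisely the relation $t_1 \preccurlyeq t_2$ in this situation (so that \Cref{theorem3.8} is legitimately applicable).

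First I would record that $t_1 \preccurlyeq t_2$ holds under the hypothesis: we are given $\clU_1 \subseteq \clU_2$, which is the same as $\C(\clU_1,\V_2)=0$, and then $\mathbb{E}^{-1}(\clU_1,\V_2) \subseteq \mathbb{E}^{-1}(\clU_2,\V_2)=0$ by the condition (STP) for $t_2$ (alternatively one may use (STP) for $t_1$ together with $\V_2\subseteq\V_1$). Hence \Cref{theorem3.8} applies verbatim and supplies order preserving, mutually inverse bijections $\Phi,\Psi$ between $\tors[t_1,t_2]$ and $\widetilde{\tors}\clH_{[t_1,t_2]}$, and, by restriction, between $\stors[t_1,t_2]$ and $\widetilde{\stors}\clH_{[t_1,t_2]}$, with $\Phi$ and $\Psi$ given by the formulas \eqref{eq3.1} and \eqref{eq3.2}. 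One may also note in passing that in this case $\tors[t_1,t_2]$ coincides with the set of torsion pairs $(\clU,\V)$ in $\C$ with $\clU_1\subseteq\clU\subseteq\clU_2$, by the same (STP) argument applied to the relations $t_1\preccurlyeq t$ and $t\preccurlyeq t_2$.

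Next I would invoke \Cref{re3.6}, which states precisely that when $t_1$ and $t_2$ are $s$-torsion pairs one has $\widetilde{\tors}\clH_{[t_1,t_2]}=\tors\clH_{[t_1,t_2]}$ and $\widetilde{\stors}\clH_{[t_1,t_2]}=\stors\clH_{[t_1,t_2]}$; the reason is that any torsion pair $(\T,\F)$ in the heart automatically satisfies $\mathbb{E}^{-1}(\T,\V_2)\subseteq\mathbb{E}^{-1}(\clU_2,\V_2)=0$ and $\mathbb{E}^{-1}(\clU_1,\F)\subseteq\mathbb{E}^{-1}(\clU_1,\V_1)=0$ since $\T\subseteq\clU_2$ and $\F\subseteq\V_1$. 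Substituting these identifications into the bijections just obtained yields exactly the two pairs of bijections in the statement. There is no genuine obstacle in this argument; the only point worth making explicit for the reader is the equivalence $\clU_1\subseteq\clU_2 \iff t_1\preccurlyeq t_2$, which legitimizes the appeal to \Cref{theorem3.8}, together with the observation that the maps are literally the same ones, so that the order-preserving property is inherited from \Cref{lemma3.3} and \Cref{lemma3.7}.
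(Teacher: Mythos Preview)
Your proposal is correct and follows essentially the same approach as the paper: the paper derives the corollary directly from \Cref{theorem3.8} together with \Cref{re3.6}, and your argument does exactly this, with the additional (but already noted in the paper) verification that $\clU_1\subseteq\clU_2$ is equivalent to $t_1\preccurlyeq t_2$ when $t_1$ or $t_2$ is an $s$-torsion pair.
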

	
	We now apply the above results to triangulated categories. Let $\D$ be a triangulated category. In this context, the relation $\preccurlyeq$ can be equivalently described as follows. Let  $t_1=(\clU_1,\V_1)$ and $t_2=(\clU_2,\V_2)$ be two torsion pairs in $\D$, then $t_1 \preccurlyeq t_2$ if and only if $\clU_1 \subseteq \clU_2$ and $\clU_1[1] \subseteq \clU_2$.
	
	\begin{Coro}\label{corollary3.10}	
		Let $\D$ be a triangulated category and $t_1=(\clU_1,\V_1)$ and $t_2=(\clU_2,\V_2)$ be two torsion pairs in $\D$ such that $\clU_1 \subseteq \clU_2$ and $\clU_1[1] \subseteq \clU_2$. Let $\clH_{[t_1,t_2]}=\clU_2\cap \V_1$. Denote by $\tstr[t_1,t_2]$ the set of $t$-structures $(\D^{\leqslant 0},\D^{\geqslant 0})$ on $\D$ such that $\clU_1 \subseteq \D^{\leqslant 0}\subseteq \clU_2$ and by $\widetilde{\tors}\clH_{[t_1,t_2]}$ (resp. $\widetilde{\stors}\clH_{[t_1,t_2]}$) the set of torsion pairs (resp. $s$-torsion pairs) $(\T,\F)$ in $\clH_{[t_1,t_2]}$ such that $\T[1] \subseteq \clU_2$ and $\F[-1]\subseteq \V_1$. Then there are the following order preserving, mutually inverse bijections.
		\begin{enumerate}
			\item $\tors[t_1,t_2] \overset{\Phi}{\underset{\Psi}{\rightleftarrows}}  \widetilde{\tors}\clH_{[t_1,t_2]}$ where $\Phi$ and $\Psi$ are given in \eqref{eq3.1} and \eqref{eq3.2}.
			\item  $\tstr[t_1,t_2] \overset{\phi}{\underset{\psi}{\rightleftarrows}} \widetilde{\stors}\clH_{[t_1,t_2]}$ where $\phi(\D^{\leqslant 0},\D^{\geqslant 0})=(\D^{\leqslant 0}\cap\V_1,\D^{\geqslant 1}\cap\clU_2)$ and $\psi(\T,\F)=(\clU_1*\T,\F[1]*\V_2[1])$.
		\end{enumerate}
	\end{Coro}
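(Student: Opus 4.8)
The plan is to deduce both statements from \Cref{theorem3.8} together with the standard dictionary, recalled in \Cref{example2.3} and the example following the definition of $s$-torsion pairs, between $s$-torsion pairs and $t$-structures on a triangulated category. For part (1), regard $\D$ as an extriangulated category with negative first extension $\mathbb{E}^{-1}(-,-)=\D(-,-[-1])$. For subcategories $\T,\F$ of $\D$ there are natural isomorphisms $\mathbb{E}^{-1}(\T,\V_2)\cong\D(\T[1],\V_2)$ and $\mathbb{E}^{-1}(\clU_1,\F)=\D(\clU_1,\F[-1])$, so that $\mathbb{E}^{-1}(\T,\V_2)=0$ holds if and only if $\T[1]\subseteq\clU_2$, and $\mathbb{E}^{-1}(\clU_1,\F)=0$ holds if and only if $\F[-1]\subseteq\V_1$ (using that a torsion class, resp.\ a torsionfree class, is the appropriate orthogonal of its partner). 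Hence the set $\widetilde{\tors}\clH_{[t_1,t_2]}$ of the Notation coincides with the set described in the Corollary, and (1) is exactly the first bijection of \Cref{theorem3.8}.

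For part (2), first recall that $(\clU,\V)\mapsto(\clU,\V[1])$ is a bijection between the $s$-torsion pairs on $\D$ and the $t$-structures on $\D$, with inverse $(\D^{\leqslant 0},\D^{\geqslant 0})\mapsto(\D^{\leqslant 0},\D^{\geqslant 1})$. The crucial claim is that this bijection restricts to a bijection $\stors[t_1,t_2]\xrightarrow{\sim}\tstr[t_1,t_2]$. If $(\clU,\V)\in\stors[t_1,t_2]$, then the morphism-vanishing parts of $t_1\preccurlyeq(\clU,\V)\preccurlyeq t_2$ force $\clU_1\subseteq\clU\subseteq\clU_2$, so $(\clU,\V[1])\in\tstr[t_1,t_2]$. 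Conversely, if $(\D^{\leqslant 0},\D^{\geqslant 0})$ is a $t$-structure with $\clU_1\subseteq\D^{\leqslant 0}\subseteq\clU_2$, then $\C(\clU_1,\D^{\geqslant 1})=0$ and $\C(\D^{\leqslant 0},\V_2)=0$ follow from $\clU_1\subseteq\D^{\leqslant 0}$ and axiom (t1), resp.\ from $\D^{\leqslant 0}\subseteq\clU_2$ and (TP1); and $\mathbb{E}^{-1}(\clU_1,\D^{\geqslant 1})\cong\D(\clU_1,\D^{\geqslant 2})=0$, $\mathbb{E}^{-1}(\D^{\leqslant 0},\V_2)\cong\D(\D^{\leqslant 0}[1],\V_2)=0$, using $\D^{\geqslant 2}\subseteq\D^{\geqslant 1}$ and $\D^{\leqslant 0}[1]\subseteq\D^{\leqslant 0}\subseteq\clU_2$ together with the shift-stability of aisles. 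Thus $(\D^{\leqslant 0},\D^{\geqslant 1})\in\stors[t_1,t_2]$, and the two assignments are mutually inverse.

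Now $\phi$ is the composite $\tstr[t_1,t_2]\xrightarrow{\sim}\stors[t_1,t_2]\xrightarrow{\Phi}\widetilde{\stors}\clH_{[t_1,t_2]}$, sending $(\D^{\leqslant 0},\D^{\geqslant 0})$ to $(\D^{\leqslant 0},\D^{\geqslant 1})$ and then to $(\D^{\leqslant 0}\cap\V_1,\D^{\geqslant 1}\cap\clU_2)$; its inverse $\psi$ is the composite $\widetilde{\stors}\clH_{[t_1,t_2]}\xrightarrow{\Psi}\stors[t_1,t_2]\xrightarrow{\sim}\tstr[t_1,t_2]$, sending $(\T,\F)$ to $(\clU_1*\T,\F*\V_2)$ and then to $(\clU_1*\T,(\F*\V_2)[1])$; since the autoequivalence $[1]$ commutes with $*$ we have $(\F*\V_2)[1]=\F[1]*\V_2[1]$, which is the stated formula. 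Well-definedness of $\phi$ and $\psi$, the fact that they are mutually inverse, and order preservation are all inherited from \Cref{theorem3.8} together with the dictionary above, which carries the inclusion order on aisles to the inclusion order on torsion classes.

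The only step requiring genuine care — the one I would single out as the main point — is the verification that the $s$-torsion pair / $t$-structure dictionary restricts between the two intervals, i.e.\ that membership in $\stors[t_1,t_2]$ is equivalent to having aisle sandwiched between $\clU_1$ and $\clU_2$. This is what makes the negative first extension conditions hidden in $\preccurlyeq$ automatic, and it relies precisely on the shift-stability $\D^{\leqslant 0}[1]\subseteq\D^{\leqslant 0}$ of aisles and on the standing hypothesis $\clU_1[1]\subseteq\clU_2$; everything else is bookkeeping on top of \Cref{theorem3.8}.
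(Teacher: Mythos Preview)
Your proposal is correct and follows essentially the same approach the paper takes: the paper presents \Cref{corollary3.10} as an immediate consequence of \Cref{theorem3.8} once the conditions $\mathbb{E}^{-1}(\T,\V_2)=0$ and $\mathbb{E}^{-1}(\clU_1,\F)=0$ are translated into $\T[1]\subseteq\clU_2$ and $\F[-1]\subseteq\V_1$, and the $s$-torsion pair / $t$-structure dictionary is applied. Your explicit verification that the dictionary restricts to a bijection $\stors[t_1,t_2]\xrightarrow{\sim}\tstr[t_1,t_2]$ is more detailed than anything the paper writes out, but it is exactly the content implicit in the paper's presentation.
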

	
	In particular, assume that $t_1=(\clU_1,\V_1)$ and $t_2=(\clU_2,\V_2)$ are $s$-torsion pairs, that is, $s_1=(\clU_1,\V_1[1])$ and $s_2=(\clU_2,\V_2[1])$ are t-structures. Define $\tors[s_1,s_2]=\tors[t_1,t_2]$. We have the following corollary, which generalizes \cite[Corollary 3.14]{AET23}.
	
	\begin{Coro}\label{corollary3.11}
		Let $\D$ be a triangulated category with two $t$-structures  $s_1=(\C_1^{\leqslant 0},\C_1^{\geqslant 0})$ and  $s_2=(\C_2^{\leqslant 0},\C_2^{\geqslant 0})$ such that $\C_1^{\leqslant 0}\subseteq \C_2^{\leqslant 0}$. Let $\clH_{[s_1,s_2]}=\C_2^{\leqslant 0}\cap \C_1^{\geqslant 1}$. Denote by $\tstr[s_1,s_2]$ the set of $t$-structures $(\D^{\leqslant 0},\D^{\geqslant 0})$ on $\D$ such that $\C_1^{\leqslant 0}\subseteq \D^{\leqslant 0}\subseteq \C_2^{\leqslant 0}$. Then there are the following order preserving, mutually inverse bijections.
		\begin{enumerate}
			\item $\tors[s_1,s_2] \overset{\phi}{\underset{\psi}{\rightleftarrows}} \tors\clH_{[s_1,s_2]}$ where $\phi(\clU,\V)=(\clU\cap\C_1^{\geqslant 1},\V\cap \C_2^{\leqslant 0})$ and $\psi(\T,\F)=(\C_1^{\leqslant 0}*\T,\F*\C_2^{\geqslant 1})$.
			\item $\tstr[s_1,s_2] \overset{\phi}{\underset{\psi}{\rightleftarrows}} \stors\clH_{[s_1,s_2]}$ where $\phi(\D^{\leqslant 0},\D^{\geqslant 0})=(\Dl0\cap\C_1^{\geqslant 1},\Dg1\cap \C_2^{\leqslant 0})$ and $\psi(\T,\F)=(\C_1^{\leqslant 0}*\T,\F[1]*\C_2^{\geqslant 0})$.
		\end{enumerate}
	\end{Coro}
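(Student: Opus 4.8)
The plan is to obtain the corollary as the special case of \Cref{corollary3.10} in which the two torsion pairs are taken to be $s$-torsion pairs, and then to rewrite the resulting maps in the language of $t$-structures. First I would set up the dictionary. Using that a pair $(\X,\Y)$ of subcategories of $\D$ is an $s$-torsion pair if and only if $(\X,\Y[1])$ is a $t$-structure, the $t$-structures $s_1=(\C_1^{\leqslant 0},\C_1^{\geqslant 0})$ and $s_2=(\C_2^{\leqslant 0},\C_2^{\geqslant 0})$ correspond to the $s$-torsion pairs $t_1=(\C_1^{\leqslant 0},\C_1^{\geqslant 1})$ and $t_2=(\C_2^{\leqslant 0},\C_2^{\geqslant 1})$. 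Since $\preccurlyeq$ reduces to inclusion of torsion classes once one side is an $s$-torsion pair, the hypothesis $\C_1^{\leqslant 0}\subseteq\C_2^{\leqslant 0}$ is precisely $t_1\preccurlyeq t_2$; moreover $\clH_{[t_1,t_2]}=\V_1\cap\clU_2=\C_1^{\geqslant 1}\cap\C_2^{\leqslant 0}=\clH_{[s_1,s_2]}$. Because $t_1$ and $t_2$ are $s$-torsion pairs, \Cref{re3.6} gives $\widetilde{\tors}\clH_{[t_1,t_2]}=\tors\clH_{[s_1,s_2]}$ and $\widetilde{\stors}\clH_{[t_1,t_2]}=\stors\clH_{[s_1,s_2]}$.

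With these identifications, part (1) is immediate: $\tors[s_1,s_2]=\tors[t_1,t_2]$ by definition, and substituting $\clU_1=\C_1^{\leqslant 0}$, $\V_1=\C_1^{\geqslant 1}$, $\clU_2=\C_2^{\leqslant 0}$, $\V_2=\C_2^{\geqslant 1}$ into the maps of \Cref{corollary3.10}(1) (which come from \Cref{theorem3.8}) yields exactly $\phi(\clU,\V)=(\clU\cap\C_1^{\geqslant 1},\V\cap\C_2^{\leqslant 0})$ and $\psi(\T,\F)=(\C_1^{\leqslant 0}*\T,\F*\C_2^{\geqslant 1})$. For part (2) one first checks, by unwinding the definitions, that $\tstr[s_1,s_2]=\tstr[t_1,t_2]$, so that \Cref{corollary3.10}(2) applies; its map $\phi$ becomes $\phi(\D^{\leqslant 0},\D^{\geqslant 0})=(\Dl0\cap\V_1,\Dg1\cap\clU_2)=(\Dl0\cap\C_1^{\geqslant 1},\Dg1\cap\C_2^{\leqslant 0})$, and its map $\psi$, namely $(\T,\F)\mapsto(\clU_1*\T,\F[1]*\V_2[1])$, becomes $\psi(\T,\F)=(\C_1^{\leqslant 0}*\T,\F[1]*\C_2^{\geqslant 0})$ after applying the shift convention $\C_2^{\geqslant n}=\C_2^{\geqslant 0}[-n]$, which gives $\V_2[1]=\C_2^{\geqslant 1}[1]=\C_2^{\geqslant 0}$. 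All four maps are order preserving and mutually inverse, since the corresponding maps of \Cref{corollary3.10} are and the poset structures correspond under the dictionary (inclusion of aisles on the $t$-structure side, $\preccurlyeq$ on the $s$-torsion side, which is a genuine partial order on $\stors\clH_{[s_1,s_2]}$ because $t_1,t_2$ are $s$-torsion pairs).

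I expect no genuinely hard step: the corollary is bookkeeping on top of \Cref{theorem3.8} and \Cref{corollary3.10}. The points that need care are the consistent use of the conventions for $\C_i^{\geqslant n}$, so that the torsionfree components of the various pairs line up — in particular the appearance of $\C_2^{\geqslant 0}$ rather than $\C_2^{\geqslant 1}$ in $\psi$ for part (2) is exactly the shift distinguishing a $t$-structure from its associated $s$-torsion pair — and the observation that the vanishing conditions $\mathbb{E}^{-1}(\T,\V_2)=0$ and $\mathbb{E}^{-1}(\clU_1,\F)=0$ defining $\widetilde{\tors}$ are automatic under the $s$-torsion hypotheses, which is \Cref{re3.6} and amounts to aisles being closed under $[1]$ and co-aisles under $[-1]$.
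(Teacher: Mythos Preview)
Your proposal is correct and follows precisely the route the paper takes: the text preceding \Cref{corollary3.11} explicitly specializes $t_1,t_2$ to $s$-torsion pairs, invokes the dictionary $(\X,\Y)\leftrightarrow(\X,\Y[1])$ between $s$-torsion pairs and $t$-structures, and defines $\tors[s_1,s_2]=\tors[t_1,t_2]$, so the corollary is the direct translation of \Cref{corollary3.10} via \Cref{re3.6}. Your careful tracking of the shift conventions (in particular why $\V_2[1]=\C_2^{\geqslant 0}$ appears in $\psi$ for part~(2)) is exactly the bookkeeping the paper leaves implicit.
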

	
	We finish this section by giving two concrete examples.
	
	\begin{Ex}
		Let $Q$ be the quiver $1 \rightarrow 2 \leftarrow 3 \leftarrow 4$. Then the path algebra $kQ$ is a hereditary algebra and the module category $\mod kQ$  is an extriangulated category with the negative first extension $\mathbb{E} ^{-1} (-,-) = \operatorname{Ext} ^1 _{kQ} (-,-)$ by \Cref{example2.3} (3). The Auslander-Reiten quiver of $\mod kQ$ is in \Cref{figure1}. 
		
		\begin{figure}[htp]
			\begin{tikzpicture}[scale = 1]
				\node (3) at (3,2) {$3$};
				\node (32) at (1,0) {$\substack{3 \\ 2}$};
				\node (123) at (2,1) {$\substack{1\,3 \\ 2}$};
				\node (2) at (0,1) {$2$};
				\node (12) at (1,2) {$\substack{1 \\ 2}$};
				\node (1432) at (3,0) {$\substack{\mathrm{~} \, 4\\ 1\, 3 \\ 2}$};
				\node (4) at (5,2) {$4$};
				\node (43) at (4,1) {$\substack{4 \\ 3}$};
				\node (432) at (2,-1) {$\substack{4 \\ 3 \\ 2}$};
				\node (1) at (4,-1) {$1$};
				\draw[->] (2) -- (12);
				\draw[->] (2) -- (32);
				\draw[->] (32) -- (123);
				\draw[->] (12) -- (123);
				\draw[->] (123) -- (3);
				\draw[->] (123) -- (1432);
				\draw[->] (32) -- (432);
				\draw[->] (432) -- (1432);
				\draw[->] (1432) -- (1);
				\draw[->] (3) -- (43);
				\draw[->] (1432) -- (43);
				\draw[->] (43) -- (4);
			\end{tikzpicture}
			\caption{The Auslander-Reiten quiver of $\mod kQ$}\label{figure1}
		\end{figure}
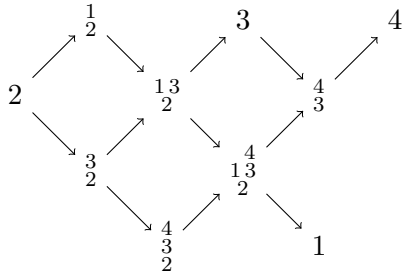
		
		For an object $M$ in $\mod kQ$, we denote by $\add M$ the smallest additive full subcategory of $\mod kQ$ which contains $M$ and is closed under taking finite direct sums and direct summands. Let $\clU_1=\add {2}$, $\V_1=\add {3 \oplus 1 \oplus \substack{4 \\ 3} \oplus 4}$, $\clU_2=\add {2 \oplus \substack{3 \\ 2} \oplus \substack{4 \\ 3 \\ 2} \oplus 3 \oplus \substack{4 \\ 3} \oplus 4}$ and $\V_2=\add {1}$. Then one can verify that  $t_1=(\clU_1,\V_1)$ and $t_2=(\clU_2,\V_2)$ are two $s$-torsion pairs with $\clU_1 \subseteq \clU_2$, and $\clH_{[t_1,t_2]}=\V_1\cap\clU_2=\add{3 \oplus \substack{4 \\ 3} \oplus 4}$. By computation, we obtain that there are exactly five torsion pairs in both $\tors[t_1,t_2]$ and $\tors\clH_{[t_1,t_2]}$. The Hasse quivers of $\tors [t_1,t_2]$ and $\tors\clH_{[t_1,t_2]}$ are shown in \Cref{figure2}, where the indecomposable objects in the torsion classes are marked in black, and those in the torsionfree classes are marked in white. Furthermore, there are three elements in each of $\stors[t_1,t_2]$ and $\stors\clH_{[t_1,t_2]}$, which are enclosed in boxes in  \Cref{figure2}. 
		
		\begin{figure}[htp]
			\begin{picture}(400,210)(0,0)
				\put(0,0){
					\begin{tikzpicture}[yscale=-0.9] 
						\node (a) at (100pt,0pt){
							\begin{tikzpicture}[scale=0.35, every node/.style={scale=0.5}]
								\node (2) at (0,2) [black] {};
								\node (12) at (1,3) [] {};
								\node (23) at (1,1) [black] {};
								\node (123) at (2,2) [] {};
								\node (234) at (2,0) [black] {};
								\node (3) at (3,3) [black] {};
								\node (1234) at (3,1) [] {};
								\node (34) at (4,2) [black] {};
								\node (1) at (4,0) [white] {};
								\node (4) at (5,3) [black] {};
								\draw[->] (2) -- (12);
								\draw[->] (2) -- (23);
								\draw[->] (12) -- (123);
								\draw[->] (23) -- (123);
								\draw[->] (23) -- (234);
								\draw[->] (123) -- (3);
								\draw[->] (123) -- (1234);
								\draw[->] (234) -- (1234);
								\draw[->] (3) -- (34);
								\draw[->] (1234) -- (34);
								\draw[->] (1234) -- (1);
								\draw[->] (34) -- (4);
								\node at (0,-.2) {};
								\node at (0,3.2) {};
								\draw[thick] ([shift={(-0.3,-0.3)}]current bounding box.south west) rectangle ([shift={(0.3,0.3)}] current bounding box.north east);
						\end{tikzpicture}} ;
						\node (b) at (20pt,60pt){
							\begin{tikzpicture}[scale=0.35, every node/.style={scale=0.5}]
								\node (2) at (0,2) [black] {};
								\node (12) at (1,3) [] {};
								\node (23) at (1,1) [] {};
								\node (123) at (2,2) [] {};
								\node (234) at (2,0) [black] {};
								\node (3) at (3,3) [white] {};
								\node (1234) at (3,1) [] {};
								\node (34) at (4,2) [black] {};
								\node (1) at (4,0) [white] {};
								\node (4) at (5,3) [black] {};
								\draw[->] (2) -- (12);
								\draw[->] (2) -- (23);
								\draw[->] (12) -- (123);
								\draw[->] (23) -- (123);
								\draw[->] (23) -- (234);
								\draw[->] (123) -- (3);
								\draw[->] (123) -- (1234);
								\draw[->] (234) -- (1234);
								\draw[->] (3) -- (34);
								\draw[->] (1234) -- (34);
								\draw[->] (1234) -- (1);
								\draw[->] (34) -- (4);
								\node at (0,-.2) {};
								\node at (0,3.2) {};
						\end{tikzpicture}} ;
						\node (c) at (20pt,120pt){
							\begin{tikzpicture}[scale=0.35, every node/.style={scale=0.5}]
								\node (2) at (0,2) [black] {};
								\node (12) at (1,3) [] {};
								\node (23) at (1,1) [] {};
								\node (123) at (2,2) [] {};
								\node (234) at (2,0) [] {};
								\node (3) at (3,3) [white] {};
								\node (1234) at (3,1) [] {};
								\node (34) at (4,2) [white] {};
								\node (1) at (4,0) [white] {};
								\node (4) at (5,3) [black] {};
								\draw[->] (2) -- (12);
								\draw[->] (2) -- (23);
								\draw[->] (12) -- (123);
								\draw[->] (23) -- (123);
								\draw[->] (23) -- (234);
								\draw[->] (123) -- (3);
								\draw[->] (123) -- (1234);
								\draw[->] (234) -- (1234);
								\draw[->] (3) -- (34);
								\draw[->] (1234) -- (34);
								\draw[->] (1234) -- (1);
								\draw[->] (34) -- (4);
								\node at (0,-.2) {};
								\node at (0,3.2) {};
						\end{tikzpicture}} ;
						\node (d) at (180pt,90pt){
							\begin{tikzpicture}[scale=0.35, every node/.style={scale=0.5}]
								\node (2) at (0,2) [black] {};
								\node (12) at (1,3) [] {};
								\node (23) at (1,1) [black] {};
								\node (123) at (2,2) [] {};
								\node (234) at (2,0) [] {};
								\node (3) at (3,3) [black] {};
								\node (1234) at (3,1) [] {};
								\node (34) at (4,2) [] {};
								\node (1) at (4,0) [white] {};
								\node (4) at (5,3) [white] {};
								\draw[->] (2) -- (12);
								\draw[->] (2) -- (23);
								\draw[->] (12) -- (123);
								\draw[->] (23) -- (123);
								\draw[->] (23) -- (234);
								\draw[->] (123) -- (3);
								\draw[->] (123) -- (1234);
								\draw[->] (234) -- (1234);
								\draw[->] (3) -- (34);
								\draw[->] (1234) -- (34);
								\draw[->] (1234) -- (1);
								\draw[->] (34) -- (4);
								\node at (0,-.2) {};
								\node at (0,3.2) {};
								\draw[thick] ([shift={(-0.3,-0.3)}]current bounding box.south west) rectangle ([shift={(0.3,0.3)}]current bounding box.north east);
						\end{tikzpicture}} ;
						\node (e) at (100pt,180pt){
							\begin{tikzpicture}[scale=0.35, every node/.style={scale=0.5}]
								\node (2) at (0,2) [black] {};
								\node (12) at (1,3) [] {};
								\node (23) at (1,1) [] {};
								\node (123) at (2,2) [] {};
								\node (234) at (2,0) [] {};
								\node (3) at (3,3) [white] {};
								\node (1234) at (3,1) [] {};
								\node (34) at (4,2) [white] {};
								\node (1) at (4,0) [white] {};
								\node (4) at (5,3) [white] {};
								\draw[->] (2) -- (12);
								\draw[->] (2) -- (23);
								\draw[->] (12) -- (123);
								\draw[->] (23) -- (123);
								\draw[->] (23) -- (234);
								\draw[->] (123) -- (3);
								\draw[->] (123) -- (1234);
								\draw[->] (234) -- (1234);
								\draw[->] (3) -- (34);
								\draw[->] (1234) -- (34);
								\draw[->] (1234) -- (1);
								\draw[->] (34) -- (4);
								\node at (0,-.2) {};
								\node at (0,3.2) {};
								\draw[thick] ([shift={(-0.3,-0.3)}]current bounding box.south west) rectangle ([shift={(0.3,0.3)}]current bounding box.north east);
						\end{tikzpicture}} ;
						\draw[->, shorten >=4pt, shorten <=4pt] (a)--(b);
						\draw[->, shorten >=4pt, shorten <=4pt] (a)--(d);
						\draw[->, shorten >=2pt, shorten <=2pt] (b)--(c);
						\draw[->, shorten >=4pt, shorten <=4pt] (c)--(e);
						\draw[->, shorten >=4pt, shorten <=4pt] (d)--(e);
				\end{tikzpicture}}
				\put(250,30){
					\begin{tikzpicture}[yscale=-0.9] 
						\node (a) at (60pt,0pt){
							\begin{tikzpicture}[scale=0.35, every node/.style={scale=0.5}]
								\node (3) at (3,3) [black] {};
								\node (34) at (4,2) [black] {};
								\node (4) at (5,3) [black] {};
								\draw[->] (3) -- (34);
								\draw[->] (34) -- (4);
								\draw[thick] ([shift={(-0.3,-0.3)}]current bounding box.south west) rectangle ([shift={(0.3,0.3)}]current bounding box.north east);
						\end{tikzpicture}} ;
						\node (b) at (0pt,45pt){
							\begin{tikzpicture}[scale=0.35, every node/.style={scale=0.5}]
								\node (3) at (3,3) [white] {};
								\node (34) at (4,2) [black] {};
								\node (4) at (5,3) [black] {};
								\draw[->] (3) -- (34);
								\draw[->] (34) -- (4);
						\end{tikzpicture}} ;
						\node (c) at (0pt,90pt){
							\begin{tikzpicture}[scale=0.35, every node/.style={scale=0.5}]
								\node (3) at (3,3) [white] {};
								\node (34) at (4,2) [white] {};
								\node (4) at (5,3) [black] {};
								\draw[->] (3) -- (34);
								\draw[->] (34) -- (4);
						\end{tikzpicture}} ;
						\node (d) at (120pt,67.5pt){
							\begin{tikzpicture}[scale=0.35, every node/.style={scale=0.5}]
								\node (3) at (3,3) [black] {};
								\node (34) at (4,2) [] {};
								\node (4) at (5,3) [white] {};
								\draw[->] (3) -- (34);
								\draw[->] (34) -- (4);
								\draw[thick] ([shift={(-0.3,-0.3)}]current bounding box.south west) rectangle ([shift={(0.3,0.3)}]current bounding box.north east);
						\end{tikzpicture}} ;
						\node (e) at (60pt,135pt){
							\begin{tikzpicture}[scale=0.35, every node/.style={scale=0.5}]
								\node (3) at (3,3) [white] {};
								\node (34) at (4,2) [white] {};
								\node (4) at (5,3) [white] {};
								\draw[->] (3) -- (34);
								\draw[->] (34) -- (4);
								\draw[thick] ([shift={(-0.3,-0.3)}]current bounding box.south west) rectangle ([shift={(0.3,0.3)}]current bounding box.north east);
						\end{tikzpicture}} ;
						\draw[->, shorten >=4pt, shorten <=4pt] (a)--(b);
						\draw[->, shorten >=4pt, shorten <=4pt] (a)--(d);
						\draw[->, shorten >=2pt, shorten <=2pt] (b)--(c);
						\draw[->, shorten >=4pt, shorten <=4pt] (c)--(e);
						\draw[->, shorten >=4pt, shorten <=4pt] (d)--(e);
				\end{tikzpicture}}
			\end{picture}
			\caption{The Hasse quivers of $\tors [t_1,t_2]$ and $\tors\clH_{[t_1,t_2]}$}\label{figure2}
		\end{figure}
	\end{Ex}

	\begin{Ex}\label{example3.13}
		Let $Q$ be the Dynkin quiver  $1\rightarrow 2$. The bounded derived category $\D=\Db {\mod kQ}$ is a triangulated category with shift functor $[1]$ and the negative first extension functor $\mathbb{E} ^{-1} (-,-) = \D(-,-[-1])$. The Auslander-Reiten quiver of $\D$ is shown in \Cref{figure3}.
		
		\begin{figure}[htp]
			\begin{tikzpicture}[scale = 0.68]
				\node (0) at (-1,1) {$\cdots$};
				\node (1-3) at (0,2) {$2[\text{-}3]$};
				\node (2-3) at (1,0) {$\substack{1\\ 2}[\text{-}3]$};
				\node (3-3) at (2,2) {$1[\text{-}3]$};
				\node (1-2) at (3,0) {$2[\text{-}2]$};
				\node (2-2) at (4,2) {$\substack{1\\ 2}[\text{-}2]$};
				\node (3-2) at (5,0) {$1[\text{-}2]$};
				\node (1-1) at (6,2) {$2[\text{-}1]$};
				\node (2-1) at (7,0) {$\substack{1\\ 2}[\text{-}1]$};
				\node (3-1) at (8,2) {$1[\text{-}1]$};
				\node (1) at (9,0) {$2$};
				\node (2) at (10,2) {$\substack{1\\ 2}$};
				\node (3) at (11,0) {$1$};
				\node (11) at (12,2) {$2[1]$};
				\node (21) at (13,0) {$\substack{1\\ 2}[1]$};
				\node (31) at (14,2) {$1[1]$};
				\node (12) at (15,0) {$2[2]$};
				\node (22) at (16,2) {$\substack{1\\ 2}[2]$};
				\node (32) at (17,0) {$1[2]$};
				\node (99) at (18,1) {$\cdots$};
				\draw[->] (1-3) -- (2-3);
				\draw[->] (2-3) -- (3-3);
				\draw[->] (3-3) -- (1-2);
				\draw[->] (1-2) -- (2-2);
				\draw[->] (2-2) -- (3-2);
				\draw[->] (3-2) -- (1-1);
				\draw[->] (1-1) -- (2-1);
				\draw[->] (2-1) -- (3-1);
				\draw[->] (3-1) -- (1);
				\draw[->] (1) -- (2);
				\draw[->] (2) -- (3);
				\draw[->] (3) -- (11);
				\draw[->] (11) -- (21);
				\draw[->] (21) -- (31);
				\draw[->] (31) -- (12);
				\draw[->] (12) -- (22);
				\draw[->] (22) -- (32);
			\end{tikzpicture}
			\caption{The Auslander-Reiten quiver of $D^b(\mod kQ)$}\label{figure3}
		\end{figure}
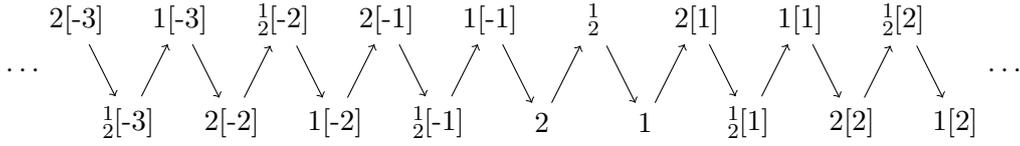
		
		Consider two torsion pairs  $t_1=(\clU_1,\V_1)$ and $t_2=(\clU_2,\V_2)$ in $\D$, as depicted in \Cref{figure4}. Here, black nodes represent objects belonging to the torsion classes, white nodes represent those in the torsionfree classes, and square nodes belong to $\mod kQ$. It is straightforward to verify that  $ t_1 \preccurlyeq t_2 $, that is,  $\clU_1 \subseteq \clU_2$ and $\clU_1[1] \subseteq \clU_2$.
		
		A direct computation shows that there are exactly four torsion pairs belonging to $\tors[t_1,t_2]$, denoted by $a_i=(\X_i,\Y_i)$, $i=1,2,3,4$, among which $a_1$, $a_3$ and $a_4$ are $s$-torsion pairs and enclosed in boxes in \Cref{figure4}.
		
		The heart is given by 
		$$\clH_{[t_1,t_2]}=\clU_2\cap\V_1=\add {2[\text{-}2] \oplus \substack{1\\ 2}[\text{-}2] \oplus 1[\text{-}2] \oplus 2[\text{-}1] \oplus 2[1]}.$$
		Within this heart, the set $\widetilde{\tors}\clH_{[t_1,t_2]}$ of torsion pairs $( \T,\F)$ satisfying  $\T[1] \subseteq \clU_2$ and $\F[-1]\subseteq \V_1$ also contains exactly four elements, denoted by $b_i=(\T_i,\F_i)$, $i=1,2,3,4$. Among these,  $b_1$, $b_3$ and $b_4$ are $s$-torsion pairs and enclosed in boxes in \Cref{figure4}.
		
		\begin{figure}[htp]
			\begin{picture}(350,150)(0,0)
				\put(0,75){
					\begin{tabular}{c|c}
						& {$\tors \D$}\\
						\hline \hline
						\\[-16pt] {$t_1$} & 
						\begin{tikzpicture}[baseline={([yshift=-.5ex]current bounding box.center)}, scale=0.35,  every node/.style={scale=0.5}];
							\node (0) at (-1,0.5) {$\cdots$};
							\node[white] (1-3) at (0,1) {};
							\node[white] (2-3) at (1,0) {};
							\node[white] (3-3) at (2,1) {};
							\node[white] (1-2) at (3,0) {};
							\node[white] (2-2) at (4,1) {};
							\node[white] (3-2) at (5,0) {};
							\node[white] (1-1) at (6,1) {};
							\node[white] (2-1) at (7,0) {};
							\node[white] (3-1) at (8,1) {};
							\node[b] (1) at (9,0) {};
							\node[b] (2) at (10,1) {};
							\node (3) at (11,0) {};
							\node[white] (11) at (12,1) {};
							\node[black] (21) at (13,0) {};
							\node[black] (31) at (14,1) {};
							\node[black] (12) at (15,0) {};
							\node[black] (22) at (16,1) {};
							\node[black] (32) at (17,0) {};
							\node (99) at (18,0.5) {$\cdots$};
							\draw[->] (1-3) -- (2-3);
							\draw[->] (2-3) -- (3-3);
							\draw[->] (3-3) -- (1-2);
							\draw[->] (1-2) -- (2-2);
							\draw[->] (2-2) -- (3-2);
							\draw[->] (3-2) -- (1-1);
							\draw[->] (1-1) -- (2-1);
							\draw[->] (2-1) -- (3-1);
							\draw[->] (3-1) -- (1);
							\draw[->] (1) -- (2);
							\draw[->] (2) -- (3);
							\draw[->] (3) -- (11);
							\draw[->] (11) -- (21);
							\draw[->] (21) -- (31);
							\draw[->] (31) -- (12);
							\draw[->] (12) -- (22);
							\draw[->] (22) -- (32);
						\end{tikzpicture} \\ [2pt]
						\hline\\[-16pt]
						{$\boxed{\smash{a_1}\vphantom{a}}$}&
						\begin{tikzpicture}[baseline={([yshift=-.5ex]current bounding box.center)}, scale=0.35,  every node/.style={scale=0.5}];
							\node (0) at (-1,0.5) {$\cdots$};
							\node[white] (1-3) at (0,1) {};
							\node[white] (2-3) at (1,0) {};
							\node[white] (3-3) at (2,1) {};
							\node[white] (1-2) at (3,0) {};
							\node[white] (2-2) at (4,1) {};
							\node[white] (3-2) at (5,0) {};
							\node[white] (1-1) at (6,1) {};
							\node[white] (2-1) at (7,0) {};
							\node[white] (3-1) at (8,1) {};
							\node[b] (1) at (9,0) {};
							\node[b] (2) at (10,1) {};
							\node[b] (3) at (11,0) {};
							\node[black] (11) at (12,1) {};
							\node[black] (21) at (13,0) {};
							\node[black] (31) at (14,1) {};
							\node[black] (12) at (15,0) {};
							\node[black] (22) at (16,1) {};
							\node[black] (32) at (17,0) {};
							\node (99) at (18,0.5) {$\cdots$};
							\draw[->] (1-3) -- (2-3);
							\draw[->] (2-3) -- (3-3);
							\draw[->] (3-3) -- (1-2);
							\draw[->] (1-2) -- (2-2);
							\draw[->] (2-2) -- (3-2);
							\draw[->] (3-2) -- (1-1);
							\draw[->] (1-1) -- (2-1);
							\draw[->] (2-1) -- (3-1);
							\draw[->] (3-1) -- (1);
							\draw[->] (1) -- (2);
							\draw[->] (2) -- (3);
							\draw[->] (3) -- (11);
							\draw[->] (11) -- (21);
							\draw[->] (21) -- (31);
							\draw[->] (31) -- (12);
							\draw[->] (12) -- (22);
							\draw[->] (22) -- (32);
						\end{tikzpicture} \\ [2pt]
						\hline\\[-16pt]
						{$a_2$}&
						\begin{tikzpicture}[baseline={([yshift=-.5ex]current bounding box.center)}, scale=0.35,  every node/.style={scale=0.5}];
							\node (0) at (-1,0.5) {$\cdots$};
							\node[white] (1-3) at (0,1) {};
							\node[white] (2-3) at (1,0) {};
							\node[white] (3-3) at (2,1) {};
							\node[black] (1-2) at (3,0) {};
							\node (2-2) at (4,1) {};
							\node[white] (3-2) at (5,0) {};
							\node[white] (1-1) at (6,1) {};
							\node[white] (2-1) at (7,0) {};
							\node[white] (3-1) at (8,1) {};
							\node[b] (1) at (9,0) {};
							\node[b] (2) at (10,1) {};
							\node[b] (3) at (11,0) {};
							\node[black] (11) at (12,1) {};
							\node[black] (21) at (13,0) {};
							\node[black] (31) at (14,1) {};
							\node[black] (12) at (15,0) {};
							\node[black] (22) at (16,1) {};
							\node[black] (32) at (17,0) {};
							\node (99) at (18,0.5) {$\cdots$};
							\draw[->] (1-3) -- (2-3);
							\draw[->] (2-3) -- (3-3);
							\draw[->] (3-3) -- (1-2);
							\draw[->] (1-2) -- (2-2);
							\draw[->] (2-2) -- (3-2);
							\draw[->] (3-2) -- (1-1);
							\draw[->] (1-1) -- (2-1);
							\draw[->] (2-1) -- (3-1);
							\draw[->] (3-1) -- (1);
							\draw[->] (1) -- (2);
							\draw[->] (2) -- (3);
							\draw[->] (3) -- (11);
							\draw[->] (11) -- (21);
							\draw[->] (21) -- (31);
							\draw[->] (31) -- (12);
							\draw[->] (12) -- (22);
							\draw[->] (22) -- (32);
						\end{tikzpicture} \\ [2pt]
						\hline\\[-16pt]
						{$\boxed{\smash{a_3}\vphantom{a}}$}&
						\begin{tikzpicture}[baseline={([yshift=-.5ex]current bounding box.center)}, scale=0.35,  every node/.style={scale=0.5}];
							\node (0) at (-1,0.5) {$\cdots$};
							\node[white] (1-3) at (0,1) {};
							\node[white] (2-3) at (1,0) {};
							\node[white] (3-3) at (2,1) {};
							\node[white] (1-2) at (3,0) {};
							\node[white] (2-2) at (4,1) {};
							\node[white] (3-2) at (5,0) {};
							\node[black] (1-1) at (6,1) {};
							\node (2-1) at (7,0) {};
							\node[white] (3-1) at (8,1) {};
							\node[b] (1) at (9,0) {};
							\node[b] (2) at (10,1) {};
							\node[b] (3) at (11,0) {};
							\node[black] (11) at (12,1) {};
							\node[black] (21) at (13,0) {};
							\node[black] (31) at (14,1) {};
							\node[black] (12) at (15,0) {};
							\node[black] (22) at (16,1) {};
							\node[black] (32) at (17,0) {};
							\node (99) at (18,0.5) {$\cdots$};
							\draw[->] (1-3) -- (2-3);
							\draw[->] (2-3) -- (3-3);
							\draw[->] (3-3) -- (1-2);
							\draw[->] (1-2) -- (2-2);
							\draw[->] (2-2) -- (3-2);
							\draw[->] (3-2) -- (1-1);
							\draw[->] (1-1) -- (2-1);
							\draw[->] (2-1) -- (3-1);
							\draw[->] (3-1) -- (1);
							\draw[->] (1) -- (2);
							\draw[->] (2) -- (3);
							\draw[->] (3) -- (11);
							\draw[->] (11) -- (21);
							\draw[->] (21) -- (31);
							\draw[->] (31) -- (12);
							\draw[->] (12) -- (22);
							\draw[->] (22) -- (32);
						\end{tikzpicture} \\ [2pt]
						\hline\\[-16pt]
						{$\boxed{\smash{a_4}\vphantom{a}}$}&
						\begin{tikzpicture}[baseline={([yshift=-.5ex]current bounding box.center)}, scale=0.35,  every node/.style={scale=0.5}];
							\node (0) at (-1,0.5) {$\cdots$};
							\node[white] (1-3) at (0,1) {};
							\node[white] (2-3) at (1,0) {};
							\node[white] (3-3) at (2,1) {};
							\node[black] (1-2) at (3,0) {};
							\node (2-2) at (4,1) {};
							\node[white] (3-2) at (5,0) {};
							\node[black] (1-1) at (6,1) {};
							\node (2-1) at (7,0) {};
							\node[white] (3-1) at (8,1) {};
							\node[b] (1) at (9,0) {};
							\node[b] (2) at (10,1) {};
							\node[b] (3) at (11,0) {};
							\node[black] (11) at (12,1) {};
							\node[black] (21) at (13,0) {};
							\node[black] (31) at (14,1) {};
							\node[black] (12) at (15,0) {};
							\node[black] (22) at (16,1) {};
							\node[black] (32) at (17,0) {};
							\node (99) at (18,0.5) {$\cdots$};
							\draw[->] (1-3) -- (2-3);
							\draw[->] (2-3) -- (3-3);
							\draw[->] (3-3) -- (1-2);
							\draw[->] (1-2) -- (2-2);
							\draw[->] (2-2) -- (3-2);
							\draw[->] (3-2) -- (1-1);
							\draw[->] (1-1) -- (2-1);
							\draw[->] (2-1) -- (3-1);
							\draw[->] (3-1) -- (1);
							\draw[->] (1) -- (2);
							\draw[->] (2) -- (3);
							\draw[->] (3) -- (11);
							\draw[->] (11) -- (21);
							\draw[->] (21) -- (31);
							\draw[->] (31) -- (12);
							\draw[->] (12) -- (22);
							\draw[->] (22) -- (32);
						\end{tikzpicture} \\ [2pt] 
						\hline	\\[-16pt]	
						{$t_2$}&
						\begin{tikzpicture}[baseline={([yshift=-.5ex]current bounding box.center)}, scale=0.35,  every node/.style={scale=0.5}];
							\node (0) at (-1,0.5) {$\cdots$};
							\node[white] (1-3) at (0,1) {};
							\node[white] (2-3) at (1,0) {};
							\node[white] (3-3) at (2,1) {};
							\node[black] (1-2) at (3,0) {};
							\node[black] (2-2) at (4,1) {};
							\node[black] (3-2) at (5,0) {};
							\node[black] (1-1) at (6,1) {};
							\node (2-1) at (7,0) {};
							\node[white] (3-1) at (8,1) {};
							\node[b] (1) at (9,0) {};
							\node[b] (2) at (10,1) {};
							\node[b] (3) at (11,0) {};
							\node[black] (11) at (12,1) {};
							\node[black] (21) at (13,0) {};
							\node[black] (31) at (14,1) {};
							\node[black] (12) at (15,0) {};
							\node[black] (22) at (16,1) {};
							\node[black] (32) at (17,0) {};
							\node (99) at (18,0.5) {$\cdots$};
							\draw[->] (1-3) -- (2-3);
							\draw[->] (2-3) -- (3-3);
							\draw[->] (3-3) -- (1-2);
							\draw[->] (1-2) -- (2-2);
							\draw[->] (2-2) -- (3-2);
							\draw[->] (3-2) -- (1-1);
							\draw[->] (1-1) -- (2-1);
							\draw[->] (2-1) -- (3-1);
							\draw[->] (3-1) -- (1);
							\draw[->] (1) -- (2);
							\draw[->] (2) -- (3);
							\draw[->] (3) -- (11);
							\draw[->] (11) -- (21);
							\draw[->] (21) -- (31);
							\draw[->] (31) -- (12);
							\draw[->] (12) -- (22);
							\draw[->] (22) -- (32);
						\end{tikzpicture} \\ [2pt]
						\hline			
				\end{tabular}}
				\put(250,75){
					\begin{tabular}{c|c}
						& {$\widetilde{\tors}\clH_{[t_1,t_2]}$} \\ 
						\hline \hline \\[-16pt]
						{$\boxed{\smash{b_1}\vphantom{a}}$}&
						\begin{tikzpicture}[baseline={([yshift=-.5ex]current bounding box.center)}, scale=0.35,  every node/.style={scale=0.5}]
							\node[white] (1-2) at (3,0) {};
							\node[white] (2-2) at (4,1) {};
							\node[white] (3-2) at (5,0) {};
							\node[white] (1-1) at (6,1) {};
							\node (7) at (6.7,0) {};
							\node (8) at (7.4,1) {};
							\node (9) at (8.1,0) {};
							\node (10) at (8.8,1) {};
							\node (11) at (9.5,0) {};
							\node[black] (12) at (10.2,1) {};
							\draw[->] (1-2) -- (2-2);
							\draw[->] (2-2) -- (3-2);
							\draw[->] (3-2) -- (1-1);
							\draw[->] (1-1) -- (7);
							\draw[->] (7) -- (8);
							\draw[->] (8) -- (9);
							\draw[->] (9) -- (10);
							\draw[->] (10) -- (11);
							\draw[->] (11) -- (12);
						\end{tikzpicture} \\ [2pt]
						\hline \\[-16pt]
						{$b_2$}&
						\begin{tikzpicture}[baseline={([yshift=-.5ex]current bounding box.center)}, scale=0.35,  every node/.style={scale=0.5}]
							\node[black] (1-2) at (3,0) {};
							\node (2-2) at (4,1) {};
							\node[white] (3-2) at (5,0) {};
							\node[white] (1-1) at (6,1) {};
							\node (7) at (6.7,0) {};
							\node (8) at (7.4,1) {};
							\node (9) at (8.1,0) {};
							\node (10) at (8.8,1) {};
							\node (11) at (9.5,0) {};
							\node[black] (12) at (10.2,1) {};
							\draw[->] (1-2) -- (2-2);
							\draw[->] (2-2) -- (3-2);
							\draw[->] (3-2) -- (1-1);
							\draw[->] (1-1) -- (7);
							\draw[->] (7) -- (8);
							\draw[->] (8) -- (9);
							\draw[->] (9) -- (10);
							\draw[->] (10) -- (11);
							\draw[->] (11) -- (12);
						\end{tikzpicture} \\ [2pt]
						\hline \\[-16pt]
						{$\boxed{\smash{b_3}\vphantom{a}}$}&
						\begin{tikzpicture}[baseline={([yshift=-.5ex]current bounding box.center)}, scale=0.35,  every node/.style={scale=0.5}]
							\node[white] (1-2) at (3,0) {};
							\node[white] (2-2) at (4,1) {};
							\node[white] (3-2) at (5,0) {};
							\node[black] (1-1) at (6,1) {};
							\node (7) at (6.7,0) {};
							\node (8) at (7.4,1) {};
							\node (9) at (8.1,0) {};
							\node (10) at (8.8,1) {};
							\node (11) at (9.5,0) {};
							\node[black] (12) at (10.2,1) {};
							\draw[->] (1-2) -- (2-2);
							\draw[->] (2-2) -- (3-2);
							\draw[->] (3-2) -- (1-1);
							\draw[->] (1-1) -- (7);
							\draw[->] (7) -- (8);
							\draw[->] (8) -- (9);
							\draw[->] (9) -- (10);
							\draw[->] (10) -- (11);
							\draw[->] (11) -- (12);
						\end{tikzpicture} \\ [2pt]
						\hline \\[-16pt]
						{$\boxed{\smash{b_4}\vphantom{a}}$}&
						\begin{tikzpicture}[baseline={([yshift=-.5ex]current bounding box.center)}, scale=0.35,  every node/.style={scale=0.5}]
							\node[black] (1-2) at (3,0) {};
							\node (2-2) at (4,1) {};
							\node[white] (3-2) at (5,0) {};
							\node[black] (1-1) at (6,1) {};
							\node (7) at (6.7,0) {};
							\node (8) at (7.4,1) {};
							\node (9) at (8.1,0) {};
							\node (10) at (8.8,1) {};
							\node (11) at (9.5,0) {};
							\node[black] (12) at (10.2,1) {};
							\draw[->] (1-2) -- (2-2);
							\draw[->] (2-2) -- (3-2);
							\draw[->] (3-2) -- (1-1);
							\draw[->] (1-1) -- (7);
							\draw[->] (7) -- (8);
							\draw[->] (8) -- (9);
							\draw[->] (9) -- (10);
							\draw[->] (10) -- (11);
							\draw[->] (11) -- (12);
						\end{tikzpicture} \\ [2pt]
						\hline	
				\end{tabular}}
			\end{picture}
			\caption{Some torsion pairs in $\D$ and $\widetilde{\tors}\clH_{[t_1,t_2]}$}\label{figure4}
		\end{figure}
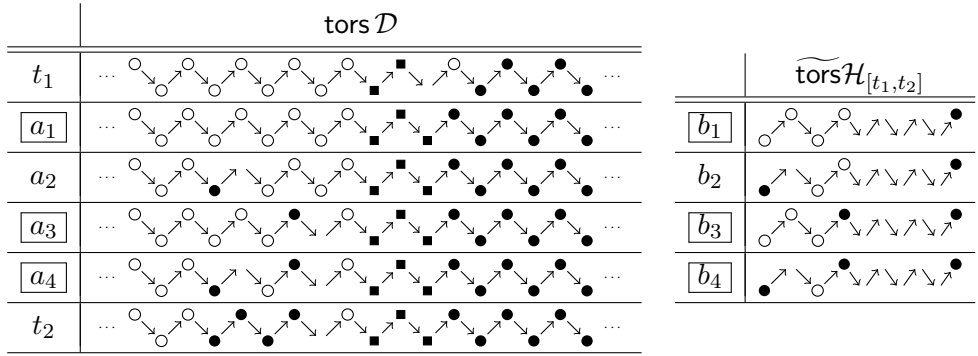
		
	\end{Ex}
	
	\section{Generalized HRS tilting on extended hearts}
	Throughout this section, let $\D$ be a triangulated category equipped with a $t$-structure $\clU=(\Ul0,\Ug0)$ and $\clH$ is the heart of $\clU$. For any positive integer $m$, there exists a $t$-structure $\clU[m]=(\Ul{-m},\Ug{-m})$ on $\D$ such that $\clU[m] \preccurlyeq \clU$. Denote $\clU^{[p,q]}=\Ug p \cap \Ul q$ for two arbitrary integers $p \leqslant q$. By \Cref{corollary3.11} (2), we have the following proposition.
	
	\begin{Prop}[{\cite[Proposition 1.9]{Zhou24}}]\label{proposition4.1}
		Let $\D$ be a triangulated category with a $t$-structure  $\clU=(\Ul0,\Ug0)$ and $\Hm=\Ul0 \cap \Ug{1-m}$. There are the following order preserving, mutually inverse bijections 
		$$\tstr[\clU[m],\clU] \overset{\phi}{\underset{\psi}{\rightleftarrows}} \stors\Hm$$ 
		where $\phi(\Vl0,\Vg0)=(\Vl0\cap\Ug{1-m},\Vg1\cap \Ul0)$ and $\psi(\T,\F)=(\Ul{-m}*\T,\F[1]*\Ug0)$.
	\end{Prop}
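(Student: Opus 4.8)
The plan is to obtain \Cref{proposition4.1} as a direct specialization of \Cref{corollary3.11}~(2), applied to the two $t$-structures $s_1=\clU[m]=(\Ul{-m},\Ug{-m})$ and $s_2=\clU=(\Ul0,\Ug0)$ on $\D$.

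First I would check the hypothesis of \Cref{corollary3.11}, namely the inclusion of aisles. Iterating axiom (t3) in the form $\Ul{n}\subseteq\Ul{n+1}$ gives $\Ul{-m}\subseteq\Ul{-m+1}\subseteq\cdots\subseteq\Ul0$, so $\C_1^{\leqslant 0}=\Ul{-m}\subseteq\Ul0=\C_2^{\leqslant 0}$; since $s_2$ is a $t$-structure this inclusion is already equivalent to $\clU[m]\preccurlyeq\clU$, so no further compatibility need be checked. Next I would identify the remaining data. With the conventions $\Ul{n}=\Ul0[-n]$ and $\Ug{n}=\Ug0[-n]$ one has $\C_1^{\geqslant 1}=\C_1^{\geqslant 0}[-1]=\Ug{-m}[-1]=\Ug{1-m}$, hence
\[
\clH_{[s_1,s_2]}=\C_2^{\leqslant 0}\cap\C_1^{\geqslant 1}=\Ul0\cap\Ug{1-m}=\Hm,
\]
and $\tstr[s_1,s_2]$, the poset of $t$-structures $(\Vl0,\Vg0)$ with $\C_1^{\leqslant 0}\subseteq\Vl0\subseteq\C_2^{\leqslant 0}$, is precisely $\tstr[\clU[m],\clU]$.

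Finally I would substitute these identifications into the bijections of \Cref{corollary3.11}~(2): the map $\phi(\Vl0,\Vg0)=(\Vl0\cap\C_1^{\geqslant 1},\Vg1\cap\C_2^{\leqslant 0})$ becomes $(\Vl0\cap\Ug{1-m},\Vg1\cap\Ul0)$, while $\psi(\T,\F)=(\C_1^{\leqslant 0}*\T,\F[1]*\C_2^{\geqslant 0})$ becomes $(\Ul{-m}*\T,\F[1]*\Ug0)$, which are exactly the formulas in the statement. Thus \Cref{corollary3.11}~(2) yields the asserted order preserving, mutually inverse bijections between $\tstr[\clU[m],\clU]$ and $\stors\Hm$, recovering \cite[Proposition 1.9]{Zhou24}. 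I do not expect any real obstacle: the entire content is already contained in \Cref{corollary3.11}, and the only point requiring care is the bookkeeping with the shift functor, so that the extended heart comes out as $\Ul0\cap\Ug{1-m}$ with the correct index $1-m$ and the second component of $\psi$ acquires the extra shift $[1]$ coming from the passage between the $s$-torsion pair and the $t$-structure it determines.
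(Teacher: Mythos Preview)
Your proposal is correct and follows exactly the approach the paper indicates: the paper simply states ``By \Cref{corollary3.11} (2), we have the following proposition'' and gives no further argument, so your careful specialization of $s_1=\clU[m]$, $s_2=\clU$ and the resulting identifications of $\clH_{[s_1,s_2]}=\Hm$, $\phi$, and $\psi$ is precisely what is intended.
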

	
	\begin{Remark}
		We observe that $$\Hm=\clU^{[-(m-1),0]}=\clH[m-1]*\clH[m-2]*\cdots *\clH.$$ We call $\Hm$ the \emph{$m$-extended heart} of $\clU$; see \cite[Definition 1.3]{Zhou24}.
	\end{Remark}
	
	It is obvious that the $m$-extended heart $\Hm$ is closed under direct summands and extensions. Since $\Hm$ is an extension-closed subcategory of $\D$, it can be regarded as an extriangulated category with the negative first extension $\mathbb{E}^{-1}(-,-)=\D(-,-[-1])$. Moreover, $\Hm$ admits further useful properties.
	
	\begin{Lemma}\label{lemma4.3}
		Let $\Hm=\Ug{-(m-1)} \cap \Ul0$ be an $m$-extended heart on $\D$. Then the following holds.
		\begin{enumerate}
			\item $\D(\Hm,\Hm[-m-k])=0$ for any $k \geqslant 0$.
			\item $\D(\Hm[m],\Hm[-1]*\Hm[-m-1]*\Hm)=0$.
			\item $(\Hm[m]*\Hm)\cap (\Hm*\Hm[-m])=\Hm$.
			\item $(\Hm[m]*\Hm*\Hm[-m])\cap (\Hm*\Hm[-m]*\Hm[-2m])=\Hm*\Hm[-m]$.
		\end{enumerate}
	\end{Lemma}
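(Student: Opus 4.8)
The plan is to reduce all four statements to two standard features of the $t$-structure $\clU=(\Ul0,\Ug0)$: the Hom-vanishing $\D(\Ul a,\Ug b)=0$ for every $a<b$ (which follows from (t1) after applying $[-a]$ and using $\Ug b\subseteq\Ug{a+1}$), and the fact that aisles and co-aisles are nested and closed under extensions (and aisles under cones). First I would record, using $\Hm=\clU^{[-(m-1),0]}=\Ug{1-m}\cap\Ul0$, the shift formula $\Hm[n]=\clU^{[1-m-n,\,-n]}=\Ug{1-m-n}\cap\Ul{-n}$ for every integer $n$; in particular $\Hm[m]\subseteq\Ul{-m}$, $\Hm[-m]=\clU^{[1,m]}\subseteq\Ug1\cap\Ul m$, $\Hm[-m-1]=\clU^{[2,m+1]}$, $\Hm[-2m]=\clU^{[1+m,2m]}$, and $\Hm[-m-k]=\clU^{[1+k,m+k]}$ for $k\geqslant0$. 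I would also record the concatenation identity $\clU^{[a,b]}*\clU^{[b+1,c]}=\clU^{[a,c]}$ for $a\leqslant b<c$, which is immediate from the truncation triangle $\tau_{\leqslant b}X\to X\to\tau_{\geqslant b+1}X\dashrightarrow$ once one notes that both truncations of $X$ inherit its co-aisle and aisle bounds.

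For (1), since $\Hm\subseteq\Ul0$ and $\Hm[-m-k]\subseteq\Ug{1+k}\subseteq\Ug1$ for $k\geqslant0$, we get $\D(\Hm,\Hm[-m-k])\subseteq\D(\Ul0,\Ug1)=0$. For (2), I would use that $\D(W,-)$ is half-exact on $\mathbb{E}$-triangles, so $\D(W,\X_1*\X_2*\X_3)=0$ as soon as $\D(W,\X_i)=0$ for $i=1,2,3$; taking $W\in\Hm[m]$ reduces the claim to $\D(\Hm[m],\Hm[n])=0$ for $n\in\{-1,-m-1,0\}$. For any $n\leqslant0$ one has $\Hm[m]\subseteq\Ul{-m}$ and $\Hm[n]\subseteq\Ug{1-m-n}\subseteq\Ug{1-m}$, whence $\D(\Hm[m],\Hm[n])\subseteq\D(\Ul{-m},\Ug{1-m})=0$ since $-m<1-m$; the three shifts in question are all $\leqslant0$.

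For (3) and (4) the inclusion $\supseteq$ is formal, since $\X\subseteq\X*\Y$ and $\Y\subseteq\X*\Y$ (the zero object lies in every additive subcategory). For $\subseteq$ in (3), take $Z\in(\Hm[m]*\Hm)\cap(\Hm*\Hm[-m])$: a triangle realizing $Z\in\Hm[m]*\Hm$, together with $\Hm[m]\subseteq\Ul{-m}\subseteq\Ul0$, $\Hm\subseteq\Ul0$, and extension-closedness of $\Ul0$, gives $Z\in\Ul0$; a triangle realizing $Z\in\Hm*\Hm[-m]$, together with $\Hm\subseteq\Ug{1-m}$, $\Hm[-m]\subseteq\Ug1\subseteq\Ug{1-m}$, and extension-closedness of $\Ug{1-m}$, gives $Z\in\Ug{1-m}$; hence $Z\in\Ul0\cap\Ug{1-m}=\Hm$. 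For $\subseteq$ in (4), take $Z$ in the intersection: all three factors of $\Hm[m]*\Hm*\Hm[-m]$ lie in $\Ul m$ (using $\Hm[-m]=\clU^{[1,m]}\subseteq\Ul m$), so $Z\in\Ul m$; all three factors of $\Hm*\Hm[-m]*\Hm[-2m]$ lie in $\Ug{1-m}$ (using $\Hm[-2m]=\clU^{[1+m,2m]}\subseteq\Ug{1+m}\subseteq\Ug{1-m}$), so $Z\in\Ug{1-m}$; therefore $Z\in\Ug{1-m}\cap\Ul m=\clU^{[1-m,m]}$, which equals $\clU^{[1-m,0]}*\clU^{[1,m]}=\Hm*\Hm[-m]$ by the concatenation identity.

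The only input that is not pure index bookkeeping is the concatenation identity $\clU^{[a,b]}*\clU^{[b+1,c]}=\clU^{[a,c]}$, and even that is used only to identify $\clU^{[1-m,m]}$ with $\Hm*\Hm[-m]$ in part (4); I would either cite it as a standard property of $t$-structures or spell out the one-line truncation argument above. So I do not expect a genuine obstacle here beyond keeping the degree bounds straight; the real content of the lemma is that it packages exactly the comparisons needed in the HRS-type bijection of \Cref{theorem4.8}.
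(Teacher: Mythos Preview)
Your proof is correct and follows essentially the same approach as the paper: reduce everything to the aisle/co-aisle containments $\Hm[n]\subseteq\Ul{-n}$ and $\Hm[n]\subseteq\Ug{1-m-n}$ together with extension-closedness and the basic Hom-vanishing of a $t$-structure. The only cosmetic difference is that the paper identifies each $*$-product directly as an interval $\clU^{[p,q]}$ (e.g.\ $\Hm[m]*\Hm=\clU^{[-(2m-1),0]}$) and then intersects, whereas you first bound each $*$-product in the relevant aisle and co-aisle and only invoke the concatenation identity $\clU^{[a,b]}*\clU^{[b+1,c]}=\clU^{[a,c]}$ at the end of~(4); these are the same computation.
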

	\begin{proof}
		\par (1) Since $\Hm \subseteq \Ul0$ and $\Hm[-m-k] \subseteq \Ug{k+1}$, we have $\D(\Hm,\Hm[-m-k])=0$.
		\par (3) By definition, we have 
		\begin{align*}
			\Hm[m]*\Hm&=(\clH[2m-1]*\clH[2m-2]*\cdots*\clH[m])*(\clH[m-1]*\clH[m-2]*\cdots*\clH)\\
			&=2m\text{-}\mathcal{H}=\Ug{-(2m-1)}\cap\Ul0.
		\end{align*}
		Similarly, we have $\Hm*\Hm[-m]=\Ug{-(m-1)}\cap\Ul{m}$. Hence $$(\Hm[m]*\Hm)\cap (\Hm*\Hm[-m])=\Ug{-(m-1)}\cap\Ul0=\Hm.$$
		\par The proofs of (2) and (4) are similar.
	\end{proof}
	
	Inspired by \cite{Jorgensen21}, we define a binary relation on the $m$-extended hearts of $\D$.
	
	\begin{Not}\label{notation4.4}
		Let $\Em_1$ and $\Em_2$ be two $m$-extended hearts of $\D$. We write $\Em_1 \leqslant \Em_2$ if and only if 
		$$\Em_1 \subseteq \Em_2[m]*\Em_2 \quad \text{and} \quad \Em_2 \subseteq \Em_1*\Em_1[-m].$$
	\end{Not}
	
	Note that $\Em_1 \leqslant \Em_2$ if and only if $\Em_2 \leqslant \Em_1[-m]$. 
	
	\begin{Ex}\label{example4.5}
		Let $(\T,\F)$ be an $s$-torsion pair in $\Hm$. Then $\F[m]*\T$ is an $m$-extended heart on $\D$ by  \cite[Theorem 1.12]{Zhou24}. We claim that $\F[m]*\T \leqslant \Hm$. Indeed, $\F[m]*\T\subseteq \Hm[m]*\Hm$, and $\Hm=\T*\F \subseteq (\F[m]*\T)*((\F[m]*\T)[-m])$.
	\end{Ex}
	
	For convenience, we define the set
	\begin{align*}
		[\Hm[m],\Hm] &:= \left\{ \text{$m$-extended hearts } \Em \text{ on } \D \;\middle|\; \Hm[m] \leqslant\Em \leqslant \Hm \right\}\\&= \left\{ \text{$m$-extended hearts } \Em \text{ on } \D \;\middle|\; \Em \leqslant \Hm \right\}.
	\end{align*}

	\begin{Lemma}
		$\leqslant$ is a partial order on $[\Hm[m],\Hm]$.
	\end{Lemma}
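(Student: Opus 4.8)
The statement asserts that $\leqslant$ is reflexive, antisymmetric and transitive on $[\Hm[m],\Hm]$, so the plan is simply to verify these three properties, the transitivity being where all the content sits. Throughout I will use the two inclusions packaged into membership in $[\Hm[m],\Hm]$, namely $\Em_i\subseteq\Hm[m]*\Hm$ and $\Hm\subseteq\Em_i*\Em_i[-m]$ (and, shifting the latter, $\Hm[m]\subseteq\Em_i[m]*\Em_i$), together with two elementary facts: that every $m$-extended heart $\mathcal G$ is closed under extensions, so $\mathcal G*\mathcal G=\mathcal G$ and hence $\mathcal G[k]*\mathcal G[k]=\mathcal G[k]$; and that $(\X*\Y)[k]=\X[k]*\Y[k]$ for subcategories of the triangulated category $\D$.

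\textbf{Reflexivity and antisymmetry.} Reflexivity is immediate: for any $m$-extended heart $\mathcal G$ the zero object lies in $\mathcal G[m]$ and in $\mathcal G[-m]$, so the split triangles $0\to X\xrightarrow{\id}X\to 0$ and $X\xrightarrow{\id}X\to 0\to X[1]$ give $\mathcal G\subseteq\mathcal G[m]*\mathcal G$ and $\mathcal G\subseteq\mathcal G*\mathcal G[-m]$, i.e. $\mathcal G\leqslant\mathcal G$. For antisymmetry, if $\Em_1\leqslant\Em_2$ and $\Em_2\leqslant\Em_1$ then $\Em_1\subseteq\Em_2[m]*\Em_2$ (from the first) and $\Em_1\subseteq\Em_2*\Em_2[-m]$ (from the second), so $\Em_1$ lies in the intersection of these two subcategories, which equals $\Em_2$ by \Cref{lemma4.3}(3) applied with $\Em_2$ in place of $\Hm$. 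By symmetry $\Em_2\subseteq\Em_1$, hence $\Em_1=\Em_2$.

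\textbf{Transitivity (the main obstacle).} Suppose $\Em_1\leqslant\Em_2\leqslant\Em_3$ in $[\Hm[m],\Hm]$; I must establish $\Em_1\subseteq\Em_3[m]*\Em_3$ and $\Em_3\subseteq\Em_1*\Em_1[-m]$. Composing the $*$-inclusions coming from $\Em_1\leqslant\Em_2$ and $\Em_2\leqslant\Em_3$ and collapsing the repeated factor via $\mathcal G[k]*\mathcal G[k]=\mathcal G[k]$, one gets $\Em_1\subseteq\Em_3[2m]*\Em_3[m]*\Em_3$; and combining $\Em_1\subseteq\Hm[m]*\Hm$ with $\Hm[m]\subseteq\Em_3[m]*\Em_3$ and $\Hm\subseteq\Em_3*\Em_3[-m]$ (again collapsing) gives $\Em_1\subseteq\Em_3[m]*\Em_3*\Em_3[-m]$. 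Now \Cref{lemma4.3}(4), applied to $\Em_3$ in place of $\Hm$ and shifted by $[m]$, reads $(\Em_3[2m]*\Em_3[m]*\Em_3)\cap(\Em_3[m]*\Em_3*\Em_3[-m])=\Em_3[m]*\Em_3$, so intersecting yields $\Em_1\subseteq\Em_3[m]*\Em_3$. The inclusion $\Em_3\subseteq\Em_1*\Em_1[-m]$ is dual: one obtains $\Em_3\subseteq\Em_1*\Em_1[-m]*\Em_1[-2m]$ from the chain $\Em_1\leqslant\Em_2\leqslant\Em_3$, and $\Em_3\subseteq\Em_1[m]*\Em_1*\Em_1[-m]$ from $\Em_3\subseteq\Hm[m]*\Hm$ together with $\Hm[m]\subseteq\Em_1[m]*\Em_1$ and $\Hm\subseteq\Em_1*\Em_1[-m]$, and then \Cref{lemma4.3}(4) applied to $\Em_1$ gives the result. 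Hence $\Em_1\leqslant\Em_3$.

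\textbf{What needs care.} The only non-bookkeeping point is the licence to apply \Cref{lemma4.3}(3),(4) to the $m$-extended hearts $\Em_1,\Em_2,\Em_3$ rather than to the fixed $\Hm$: each $\Em_i$ is, by definition, the $m$-extended heart of some $t$-structure $\mathcal W_i$ on $\D$, and the statement and proof of \Cref{lemma4.3} hold verbatim with $(\mathcal W_i,\Em_i)$ in place of $(\clU,\Hm)$, so this substitution is legitimate and should be noted explicitly. Everything else — the shift arithmetic in \Cref{lemma4.3}(4) (for $\Em_3$ one multiplies the identity through by $[m]$), the identity $(\X*\Y)[k]=\X[k]*\Y[k]$, and the repeated use of $\mathcal G*\mathcal G=\mathcal G$ — is routine.
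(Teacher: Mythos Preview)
Your proof is correct and follows essentially the same route as the paper's: antisymmetry via \Cref{lemma4.3}(3), and transitivity by producing two triple-$*$ containments (one from the chain $\Em_1\leqslant\Em_2\leqslant\Em_3$, the other from membership in $[\Hm[m],\Hm]$) and then intersecting via \Cref{lemma4.3}(4). Your explicit remark that \Cref{lemma4.3} may be invoked for any $m$-extended heart, not just the fixed $\Hm$, is a point the paper uses tacitly; making it explicit is an improvement in clarity.
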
 
	\begin{proof}
		It is clear that $\leqslant$ is reflexive.
		\par To prove that $\leqslant$ is antisymmetric. Suppose that $\Em_1,\Em_2 \in [\Hm[m],\Hm]$ such that $\Em_1 \leqslant \Em_2$ and $\Em_2 \leqslant \Em_1$. Then by definition, $$\Em_2 \subseteq \Em_1*\Em_1[-m] \quad \text{and} \quad \Em_2 \subseteq \Em_1[m]*\Em_1.$$ It follows that $\Em_2 \subseteq (\Em_1[m]*\Em_1)\cap(\Em_1*\Em_1[-m])$. By \Cref{lemma4.3} (3), $\Em_2 \subseteq \Em_1$. Similarly, $\Em_1 \subseteq \Em_2$. Hence $\Em_1=\Em_2$.
		\par To prove that $\leqslant$ is transitive. Let $\Em_1,\Em_2,\Em_3 \in [\Hm[m],\Hm]$ such that $\Em_1\leqslant\Em_2$ and	$\Em_2\leqslant\Em_3$. Then 
		\begin{align*}
			\Em_3 &\subseteq \Em_2*\Em_2[-m]\\ &\subseteq (\Em_1*\Em_1[-m])*(\Em_1[-m]*\Em_1[-2m])\\ &=\Em_1*\Em_1[-m]*\Em_1[-2m],\\
			\Em_3 &\subseteq \Hm[m]*\Hm\\ &=(\Em_1[m]*\Em_1)*(\Em_1*\Em_1[-m])\\ &=\Em_1[m]*\Em_1*\Em_1[-m].
		\end{align*} 
		Therefore, $$\Em_3 \subseteq (\Em_1*\Em_1[-m]*\Em_1[-2m])\cap(\Em_1[m]*\Em_1*\Em_1[-m]).$$
		By \Cref{lemma4.3} (4), we obtain $\Em_3 \subseteq \Em_1*\Em_1[-m]$. Similar computations show $\Em_1 \subseteq \Em_3[m]*\Em_3$, hence $\Em_1 \leqslant \Em_3$.
	\end{proof}
	
	\begin{Lemma}\label{lemma4.7}
		Let $\Em$ and  $\Hm$ be $m$-extended hearts on $\D$ such that $\Em 
		\leqslant \Hm$. Then $(\Hm\cap\Em,\Hm\cap\Em[-m])$ is an $s$-torsion pair in $\Hm$.
	\end{Lemma}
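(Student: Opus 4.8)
The plan is to verify the three defining conditions (TP1), (TP2), (STP) of an $s$-torsion pair directly for the pair $(\T,\F):=(\Hm\cap\Em,\,\Hm\cap\Em[-m])$ inside the extriangulated category $\Hm$. I will use repeatedly that $\Hm$ is extension-closed in $\D$, so that an $\mathbb{E}$-triangle in $\Hm$ is precisely a triangle of $\D$ all three of whose vertices lie in $\Hm$, and that the negative first extension of $\Hm$ is $\mathbb{E}^{-1}(-,-)=\D(-,-[-1])$. Since $\Hm$, $\Em$ and $\Em[-m]$ are each closed under finite direct sums and direct summands, so are $\T$ and $\F$, hence these subcategories already have the correct shape.

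Conditions (TP1) and (STP) should follow at once from the Hom-vanishing of \Cref{lemma4.3}(1), which applies verbatim to the $m$-extended heart $\Em$. Indeed, since $\Hm$ is full in $\D$, (TP1) amounts to $\D(\T,\F)\subseteq\D(\Em,\Em[-m])=0$, the case $k=0$; and (STP) amounts to $\mathbb{E}^{-1}(\T,\F)=\D(\T,\F[-1])\subseteq\D(\Em,\Em[-m-1])=0$, the case $k=1$.

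The substance is (TP2): given $X\in\Hm$, I must exhibit a triangle $T\to X\to F\to T[1]$ in $\D$ with $T\in\T$ and $F\in\F$. Using $\Em\leqslant\Hm$, which in particular gives $\Hm\subseteq\Em*\Em[-m]$, fix a triangle $E\to X\to E'\to E[1]$ with $E\in\Em$ and $E'\in\Em[-m]$; it then suffices to check $E,E'\in\Hm$. The other half of $\Em\leqslant\Hm$ together with \Cref{lemma4.3}(3) gives $\Em\subseteq\Hm[m]*\Hm=\Ug{-(2m-1)}\cap\Ul0$, so $E\in\Ul0$; shifting, $\Em[-m]\subseteq(\Hm[m]*\Hm)[-m]=\Hm*\Hm[-m]=\Ug{-(m-1)}\cap\Ul m$, so $E'\in\Ug{-(m-1)}$. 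Now $X\in\Ul0$ and the aisle $\Ul0$ is closed under cones, so the cone $E'$ of $E\to X$ lies in $\Ul0$; hence $E'\in\Ug{-(m-1)}\cap\Ul0=\Hm$ and $E'\in\F$. Likewise $X,E'\in\Ug{-(m-1)}$ and the co-aisle $\Ug{-(m-1)}$ is closed under cocones, so the cocone $E$ of $X\to E'$ lies in $\Ug{-(m-1)}$; hence $E\in\Ug{-(m-1)}\cap\Ul0=\Hm$ and $E\in\T$. Thus the chosen triangle is an $\mathbb{E}$-triangle in $\Hm$ of the required form, and (TP2) holds.

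The only delicate point I foresee is the shift bookkeeping: identifying $\Hm[m]*\Hm$ and its shift $(\Hm[m]*\Hm)[-m]=\Hm*\Hm[-m]$ with the truncation subcategories $\Ug{-(2m-1)}\cap\Ul0$ and $\Ug{-(m-1)}\cap\Ul m$ (exactly the computations carried out in the proof of \Cref{lemma4.3}(3)), and applying the correct closure property — aisles under cones, co-aisles under cocones — to each vertex of the triangle. I do not anticipate any deeper obstacle.
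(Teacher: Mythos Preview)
Your proof is correct and takes a genuinely different route for the main step (TP2). Both you and the paper start from the triangle $E\to X\to E'\to E[1]$ supplied by $\Hm\subseteq\Em*\Em[-m]$ and reduce to showing $E,E'\in\Hm$. The paper argues intrinsically: it places $E$ in $\Hm[-1]*\Hm[-m-1]*\Hm$ via the rotated triangle, takes a decomposition triangle $C[m]\xrightarrow{u}E\to D$ from $E\in\Em\subseteq\Hm[m]*\Hm$, and uses the Hom-vanishing of \Cref{lemma4.3}(2) to conclude $u=0$, whence $E$ is a summand of $D\in\Hm$. You instead unpack the extended hearts in terms of the ambient $t$-structure: the identifications $\Hm[m]*\Hm=\Ug{-(2m-1)}\cap\Ul0$ and $\Hm*\Hm[-m]=\Ug{-(m-1)}\cap\Ul m$ from \Cref{lemma4.3}(3) pin down one half of the membership of $E$ and $E'$, and the closure of aisles under cones and co-aisles under cocones supplies the other half. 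Your argument is shorter and avoids \Cref{lemma4.3}(2) altogether; the paper's argument has the mild advantage of never leaving the language of extended hearts, which is closer in spirit to the abstract setting of \cite{Jorgensen21}.
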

	\begin{proof}
		(TP1) follows from \Cref{lemma4.3} (1). Now we verify (TP2). Since $\Hm \subseteq \Em*\Em[-m]$, for any $X \in \Hm$, there exists a distinguished triangle 
		$$A \rightarrow X \rightarrow B[-m] \rightarrow A[1]$$
		where $A,B \in \Em$. By rotation, we have $A \in \Em[-m-1]*\Hm \subseteq \Hm[-1]*\Hm[-m-1]*\Hm$. We now show that $A \in \Hm$. Since $A \in \Em \subseteq \Hm[m]*\Hm$, we obtain a distinguished triangle 
		$$C[m] \stackrel{u}{\rightarrow} A \rightarrow D \rightarrow C[m+1]$$
		where $C,D \in \Hm$. By \Cref{lemma4.3} (2), it follows that $u=0$. Hence, $A$ is a direct summand of $D$, and so $A\in \Hm$. Therefore, $A \in \Hm \cap \Em$. Similarly, $B[-m] \in \Hm \cap \Em[-m]$. This shows that $\Hm \subseteq (\Hm\cap\Em)*(\Hm\cap\Em[-m])$. Conversely, since $\Hm$ is extension-closed, we immediately have $\Hm \supseteq (\Hm\cap\Em)*(\Hm\cap\Em[-m])$. Hence, $\Hm = (\Hm\cap\Em)*(\Hm\cap\Em[-m])$. Moreover, it is straightforward to verify that	$\D(\Hm\cap\Em,(\Hm\cap\Em[-m])[-1])=0$ by \Cref{lemma4.3} (1). (STP) holds.
	\end{proof}
	
	Now we can finish the proof of \Cref{theorem1.3}.
	
	\begin{Thm}\label{theorem4.8}
		Let $\Hm$ be an $m$-extended heart on $\D$. Then there are order preserving, mutually inverse bijections
		$$\xymatrix{\stors\Hm \ar@<0.5ex>[rr]^{\phi\quad} && [\Hm[m],\Hm]\ar@<0.5ex>[ll]^{\psi\quad}},$$
		given by
		$$\phi(\T,\F)=\F[m]*\T,\quad\quad \psi(\Em)=(\Hm\cap\Em,\Hm\cap\Em[-m]).$$
	\end{Thm}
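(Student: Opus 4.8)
The plan is to check, in turn, that $\phi$ and $\psi$ are well defined, that they are mutually inverse, and that each preserves the order. Well-definedness is essentially already at hand: for an $s$-torsion pair $(\T,\F)$ in $\Hm$, \Cref{example4.5} records that $\F[m]*\T$ is an $m$-extended heart with $\F[m]*\T\leqslant\Hm$, so $\phi(\T,\F)\in[\Hm[m],\Hm]$; and for $\Em\in[\Hm[m],\Hm]$ one has $\Em\leqslant\Hm$, so \Cref{lemma4.7} is precisely the statement that $\psi(\Em)$ is an $s$-torsion pair in $\Hm$.

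To see $\psi\circ\phi=\id$, fix $(\T,\F)$, put $\Em=\F[m]*\T$, and note $\Em[-m]=\F*\T[-m]$; one must show $\Hm\cap\Em=\T$ and $\Hm\cap(\F*\T[-m])=\F$. The containments $\supseteq$ follow from the split $\mathbb{E}$-triangles $0\to T\to T\dashrightarrow$ and $F\to F\to 0\dashrightarrow$ together with $\T,\F\subseteq\Hm$. For $\subseteq$, \Cref{lemma4.3}(1) gives $\D(\Hm,\Hm[-m])=0$; applying $\D(-,\F)$ to the defining $\mathbb{E}$-triangle of an object of $\F[m]*\T$ yields $\D(\F[m]*\T,\F)=0$, and applying $\D(\T,-)$ to that of an object of $\F*\T[-m]$ yields $\D(\T,\F*\T[-m])=0$; feeding these into the characterizations of the torsion and torsionfree classes of $\Hm$ as $\{A\mid\Hm(A,\F)=0\}$ and $\{A\mid\Hm(\T,A)=0\}$ gives the reverse containments. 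To see $\phi\circ\psi=\id$, fix $\Em\in[\Hm[m],\Hm]$, set $(\T,\F)=\psi(\Em)$, so $\F[m]=\Hm[m]\cap\Em$ and $\T=\Hm\cap\Em$, and recall $\Em\leqslant\Hm\iff\Hm[m]\leqslant\Em$. The key step is to apply \Cref{lemma4.7} to the pair $\Hm[m]\leqslant\Em$, with $\Em$ now playing the role of the larger $m$-extended heart: this produces the $s$-torsion pair $(\Em\cap\Hm[m],\Em\cap\Hm)$ in $\Em$, whose (TP2) decomposition reads $\Em=(\Em\cap\Hm[m])*(\Em\cap\Hm)=\F[m]*\T=\phi(\psi(\Em))$.

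Finally, for order preservation of $\psi$: if $\Em_1\leqslant\Em_2$ and $X\in\Hm\cap\Em_1$, then $\Em_1\subseteq\Em_2[m]*\Em_2$ gives an $\mathbb{E}$-triangle $A\to X\to B\dashrightarrow$ with $A\in\Em_2[m]$, $B\in\Em_2$; since $\Em_2\subseteq\Hm[m]*\Hm$ we have $A\in\Hm[2m]*\Hm[m]$, so $\D(A,X)=0$ by \Cref{lemma4.3}(1), the morphism $A\to X$ vanishes, and $X$ is a direct summand of $B\in\Em_2$. Hence $\Hm\cap\Em_1\subseteq\Hm\cap\Em_2$, so $\psi$ is order preserving. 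For $\phi$: given $s$-torsion pairs with $\T_1\subseteq\T_2$ (equivalently $\F_1\supseteq\F_2$), both inclusions of \Cref{notation4.4} defining $\F_1[m]*\T_1\leqslant\F_2[m]*\T_2$ follow by refining the relevant objects along the torsion decompositions $\Hm=\T_1*\F_1$ and $\Hm=\T_2*\F_2$ — for instance $\F_2[m]*\T_2\subseteq\F_1[m]*\T_2\subseteq\F_1[m]*(\T_1*\F_1)=(\F_1[m]*\T_1)*\F_1\subseteq(\F_1[m]*\T_1)*((\F_1[m]*\T_1)[-m])$ — using associativity of $*$ and that $m$-extended hearts are extension-closed and contain $0$. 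I expect the two Hom-vanishing arguments resting on \Cref{lemma4.3}(1) to be the only points needing real care; granting the preparatory lemmas, the rest is routine bookkeeping with $*$.
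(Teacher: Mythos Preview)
Your proof is correct and follows essentially the same strategy as the paper's: well-definedness via \Cref{example4.5} and \Cref{lemma4.7}, mutual inverses via \Cref{lemma4.7} applied to the shifted inequality, and order preservation via \Cref{lemma4.3}(1). The only cosmetic differences are that for $\psi\phi=\id$ the paper shows the map $A\to X$ (with $A\in\F[m]$) vanishes and uses closure under summands, whereas you deduce $\D(\F[m]*\T,\F)=0$ and invoke the torsion-class characterization; and for $\psi$ order-preserving the paper uses the decomposition $\Hm\subseteq\Em_2*\Em_2[-m]$ and kills the rightward map, while you use $\Em_1\subseteq\Em_2[m]*\Em_2$ and kill the leftward one---these are dual variants of the same idea.
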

	\begin{proof}
		\par The maps $\phi$ and $\psi$ are well defined by \Cref{example4.5} and \Cref{lemma4.7}.
		\par We want to show that $\phi\psi=\id_{[\Hm[m],\Hm]}$. As mentioned before, if $\Em \leqslant \Hm$ are two $m$-extended hearts on $\D$, then $\Hm \leqslant \Em[-m]$. By \Cref{lemma4.7}, $(\Em[-m]\cap\Hm,\Em[-m]\cap\Hm[-m])$ is an $s$-torsion pair in $\Em[-m]$, hence $(\Em[-m]\cap\Hm)*(\Em[-m]\cap\Hm[-m])=\Em[-m]$. So $(\Em\cap\Hm[m])*(\Em\cap\Hm)=\Em$.
		\par We want to show that $\psi\phi=\id_{\stors\Hm}$, that is, for any $(\T,\F) \in \stors\Hm$, $\T=\Hm\cap(\F[m]*\T)$ and $\F=\Hm\cap(\F*\T[-m])$.	We only prove the former, and the latter is analogous. It is clear that $\T \subseteq \Hm\cap(\F[m]*\T)$. Conversely, let $X \in \Hm \cap (\F[m]*\T)$. Then there exists a distinguished triangle 
		$$A\stackrel{u}{\rightarrow}X\rightarrow B \rightarrow A[1],$$
		where $A\in \F[m]\subseteq\Hm[m]$ and $B \in \T$. It follows that $u=0$ by \Cref{lemma4.3} (1), so $X$ is a direct summand of  $B$ and $X \in \T$.
		\par Let $t_1=(\T_1,\F_1)$, $t_2=(\T_2,\F_2)\in \stors\Hm$ such that $t_1 \preccurlyeq t_2$. We want to prove that $\phi$ preserves the partial orders, that is, 
		\begin{align*}
			&\F_1[m]*\T_1 \subseteq (\F_2[2m]*\T_2[m])*(\F_2[m]*\T_2), \\
			&\F_2[m]*\T_2 \subseteq (\F_1[m]*\T_1)*(\F_1*\T_1[-m]).
		\end{align*}
		Indeed, since $\F_1[m]\subseteq \Hm[m]\subseteq \F_2[2m]*(\T_2[m]*\F_2[m])$, $\T_2\subseteq \Hm \subseteq (\T_1*\F_1)*\T_1[-m]$ and $t_1 \preccurlyeq t_2$, the above inclusions follow immediately.
		\par Let $\Em_1\,\Em_2 \in [\Hm[m],\Hm]$ such that $\Em_1\leqslant\Em_2$. We want to prove that $\psi$ preserves the partial orders, that is, $$\Hm\cap\Em_1\subseteq \Hm\cap\Em_2.$$ 
		Let $X\in \Hm\cap\Em_1$. Since $\Hm\subseteq\Em_2*\Em_2[-m]$, there exists a distinguished triangle 
		$$A\rightarrow X \stackrel{v}{\rightarrow} B[-m] \rightarrow A[1],$$
		where $A,B \in \Em_2$. Moreover, since $X\in \Em_1 \subseteq \Em_2[m]*\Em_2$, it follows from \Cref{lemma4.3} (1) that $v=0$. Hence $X$ is a direct summand of $A$ and so $X \in \Em_2$. Therefore $t_1 \preccurlyeq t_2$.
	\end{proof}
	
	\begin{Ex}
		Let $\D$ be the triangulated category in \Cref{example3.13} and fix $m=2$. Let $\clU=(\Ul0,\Ug0)$ be the standard $t$-structure on $\D$  with heart $\clH=\mod kQ$. By computation, we get the Hasse quiver of the $s$-torsion pairs in $2\text{-}\clH$ in \Cref{figure5}. We depict the $s$-torsion pairs in the Auslander-Reiten quiver by marking in black the indecomposable objects in the torsion classes, in white the indecomposable objects in the torsionfree classes. The square nodes belong to $\mod kQ$.
		\begin{figure}[htbp]
			\begin{tikzpicture}[yscale=-1] 
				\node at (100pt,0pt){
					\begin{tikzpicture}[scale=0.35, every node/.style={scale=0.5}]
						\node (1) at (0,0) [b] {};
						\node (2) at (1,1) [b] {};
						\node (3) at (2,0) [b] {};
						\node (4) at (3,1) [black] {};
						\node (5) at (4,0) [black] {};
						\node (6) at (5,1) [black] {};
						\draw[->] (1) -- (2);
						\draw[->] (2) -- (3);
						\draw[->] (3) -- (4);
						\draw[->] (4) -- (5);
						\draw[->] (5) -- (6);
				\end{tikzpicture}} ;
				\node at (100pt,30pt){
					\begin{tikzpicture}[scale=0.35, every node/.style={scale=0.5}]
						\node (1) at (0,0) [w] {};
						\node (2) at (1,1) [b] {};
						\node (3) at (2,0) [b] {};
						\node (4) at (3,1) [black] {};
						\node (5) at (4,0) [black] {};
						\node (6) at (5,1) [black] {};
						\draw[->] (1) -- (2);
						\draw[->] (2) -- (3);
						\draw[->] (3) -- (4);
						\draw[->] (4) -- (5);
						\draw[->] (5) -- (6);
				\end{tikzpicture} } ;
				\node at (0pt,45pt){
					\begin{tikzpicture}[scale=0.35, every node/.style={scale=0.5}]
						\node (1) at (0,0) [b] {};
						\node (2) at (1,1) [] {};
						\node (3) at (2,0) [w] {};
						\node (4) at (3,1) [black] {};
						\node (5) at (4,0) [black] {};
						\node (6) at (5,1) [black] {};
						\draw[->] (1) -- (2);
						\draw[->] (2) -- (3);
						\draw[->] (3) -- (4);
						\draw[->] (4) -- (5);
						\draw[->] (5) -- (6);
				\end{tikzpicture} } ;
				\node at (100pt,60pt){
					\begin{tikzpicture}[scale=0.35, every node/.style={scale=0.5}]
						\node (1) at (0,0) [w] {};
						\node (2) at (1,1) [w] {};
						\node (3) at (2,0) [b] {};
						\node (4) at (3,1) [black] {};
						\node (5) at (4,0) [black] {};
						\node (6) at (5,1) [black] {};
						\draw[->] (1) -- (2);
						\draw[->] (2) -- (3);
						\draw[->] (3) -- (4);
						\draw[->] (4) -- (5);
						\draw[->] (5) -- (6);
				\end{tikzpicture} } ;
				\node at (100pt,90pt){
					\begin{tikzpicture}[scale=0.35, every node/.style={scale=0.5}]
						\node (1) at (0,0) [w] {};
						\node (2) at (1,1) [w] {};
						\node (3) at (2,0) [w] {};
						\node (4) at (3,1) [black] {};
						\node (5) at (4,0) [black] {};
						\node (6) at (5,1) [black] {};
						\draw[->] (1) -- (2);
						\draw[->] (2) -- (3);
						\draw[->] (3) -- (4);
						\draw[->] (4) -- (5);
						\draw[->] (5) -- (6);
				\end{tikzpicture} } ;
				\node at (200pt,75pt){
					\begin{tikzpicture}[scale=0.35, every node/.style={scale=0.5}]
						\node (1) at (0,0) [w] {};
						\node (2) at (1,1) [b] {};
						\node (3) at (2,0) [] {};
						\node (4) at (3,1) [white] {};
						\node (5) at (4,0) [black] {};
						\node (6) at (5,1) [black] {};
						\draw[->] (1) -- (2);
						\draw[->] (2) -- (3);
						\draw[->] (3) -- (4);
						\draw[->] (4) -- (5);
						\draw[->] (5) -- (6);
				\end{tikzpicture} } ;
				\node at (0pt,90pt){
					\begin{tikzpicture}[scale=0.35, every node/.style={scale=0.5}]
						\node (1) at (0,0) [b] {};
						\node (2) at (1,1) [] {};
						\node (3) at (2,0) [w] {};
						\node (4) at (3,1) [black] {};
						\node (5) at (4,0) [] {};
						\node (6) at (5,1) [white] {};
						\draw[->] (1) -- (2);
						\draw[->] (2) -- (3);
						\draw[->] (3) -- (4);
						\draw[->] (4) -- (5);
						\draw[->] (5) -- (6);
				\end{tikzpicture} } ;
				\node at (100pt,120pt){
					\begin{tikzpicture}[scale=0.35, every node/.style={scale=0.5}]
						\node (1) at (0,0) [w] {};
						\node (2) at (1,1) [w] {};
						\node (3) at (2,0) [w] {};
						\node (4) at (3,1) [white] {};
						\node (5) at (4,0) [black] {};
						\node (6) at (5,1) [black] {};
						\draw[->] (1) -- (2);
						\draw[->] (2) -- (3);
						\draw[->] (3) -- (4);
						\draw[->] (4) -- (5);
						\draw[->] (5) -- (6);
				\end{tikzpicture} } ;
				\node at (200pt,105pt){
					\begin{tikzpicture}[scale=0.35, every node/.style={scale=0.5}]
						\node (1) at (0,0) [w] {};
						\node (2) at (1,1) [w] {};
						\node (3) at (2,0) [b] {};
						\node (4) at (3,1) [] {};
						\node (5) at (4,0) [white] {};
						\node (6) at (5,1) [black] {};
						\draw[->] (1) -- (2);
						\draw[->] (2) -- (3);
						\draw[->] (3) -- (4);
						\draw[->] (4) -- (5);
						\draw[->] (5) -- (6);
				\end{tikzpicture} } ;
				\node at (0pt,135pt){
					\begin{tikzpicture}[scale=0.35, every node/.style={scale=0.5}]
						\node (1) at (0,0) [w] {};
						\node (2) at (1,1) [w] {};
						\node (3) at (2,0) [w] {};
						\node (4) at (3,1) [black] {};
						\node (5) at (4,0) [] {};
						\node (6) at (5,1) [white] {};
						\draw[->] (1) -- (2);
						\draw[->] (2) -- (3);
						\draw[->] (3) -- (4);
						\draw[->] (4) -- (5);
						\draw[->] (5) -- (6);
				\end{tikzpicture} } ;
				\node at (100pt,150pt){
					\begin{tikzpicture}[scale=0.35, every node/.style={scale=0.5}]
						\node (1) at (0,0) [w] {};
						\node (2) at (1,1) [w] {};
						\node (3) at (2,0) [w] {};
						\node (4) at (3,1) [white] {};
						\node (5) at (4,0) [white] {};
						\node (6) at (5,1) [black] {};
						\draw[->] (1) -- (2);
						\draw[->] (2) -- (3);
						\draw[->] (3) -- (4);
						\draw[->] (4) -- (5);
						\draw[->] (5) -- (6);
				\end{tikzpicture} } ;
				\node at (100pt,180pt){
					\begin{tikzpicture}[scale=0.35, every node/.style={scale=0.5}]
						\node (1) at (0,0) [w] {};
						\node (2) at (1,1) [w] {};
						\node (3) at (2,0) [w] {};
						\node (4) at (3,1) [white] {};
						\node (5) at (4,0) [white] {};
						\node (6) at (5,1) [white] {};
						\draw[->] (1) -- (2);
						\draw[->] (2) -- (3);
						\draw[->] (3) -- (4);
						\draw[->] (4) -- (5);
						\draw[->] (5) -- (6);
				\end{tikzpicture}};
				\draw[->, shorten >=10pt, shorten <=10pt] (100pt,0pt)--(100pt,30pt);
				\draw[->, shorten >=22pt, shorten <=22pt] (100pt,0pt)--(0pt,45pt);
				\draw[->, shorten >=10pt, shorten <=10pt] (100pt,30pt)--(100pt,60pt);
				\draw[->, shorten >=22pt, shorten <=22pt] (100pt,30pt)--(200pt,75pt);
				\draw[->, shorten >=22pt, shorten <=22pt] (0pt,45pt)--(100pt,90pt);
				\draw[->, shorten >=10pt, shorten <=10pt] (0pt,45pt)--(0pt,90pt);
				\draw[->, shorten >=10pt, shorten <=10pt] (100pt,60pt)--(100pt,90pt);
				\draw[->, shorten >=22pt, shorten <=22pt] (100pt,60pt)--(200pt,105pt);
				\draw[->, shorten >=10pt, shorten <=10pt] (100pt,90pt)--(100pt,120pt);
				\draw[->, shorten >=22pt, shorten <=22pt] (100pt,90pt)--(0pt,135pt);
				\draw[->, shorten >=22pt, shorten <=22pt] (200pt,75pt)--(100pt,120pt);
				\draw[->, shorten >=10pt, shorten <=10pt] (0pt,90pt)--(0pt,135pt);
				\draw[->, shorten >=10pt, shorten <=10pt] (100pt,120pt)--(100pt,150pt);
				\draw[->, shorten >=22pt, shorten <=22pt] (200pt,105pt)--(100pt,150pt);
				\draw[->, shorten >=22pt, shorten <=22pt] (0pt,135pt)--(100pt,180pt);
				\draw[->, shorten >=10pt, shorten <=10pt] (100pt,150pt)--(100pt,180pt);
			\end{tikzpicture}
			\caption{The Hasse quiver of $s$-torsion pairs in 2-$\clH$}\label{figure5}
		\end{figure}
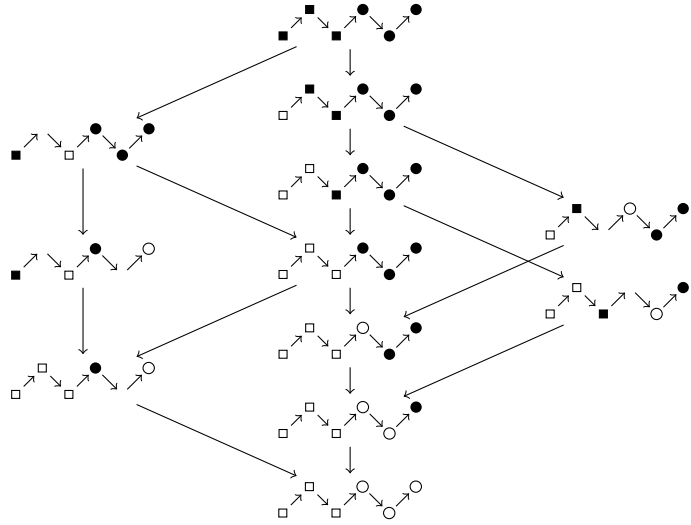
		
		Similarly, \Cref{figure6} shows the Hasse quiver of the 2-extended hearts in $[2\text{-}\clH[2],2\text{-}\clH]$, where indecomposable objects in the extended hearts are marked in black, and those in $\mod kQ$ are marked as squares. Here, $2\text{-}\clH$ is the top extended heart and $2\text{-}\clH[2]$ is the bottom one.
		\begin{figure}[htbp]
			\begin{tikzpicture}[yscale=-1] 
				\node at (150pt,0pt){
					\begin{tikzpicture}[scale=0.35, every node/.style={scale=0.5}]
						\node (1) at (0,0) [b] {};
						\node (2) at (1,1) [b] {};
						\node (3) at (2,0) [b] {};
						\node (4) at (3,1) [black] {};
						\node (5) at (4,0) [black] {};
						\node (6) at (5,1) [black] {};
						\node (7) at (6,0) [white] {};
						\node (8) at (7,1) [white] {};
						\node (9) at (8,0) [white] {};
						\node (10) at (9,1) [white] {};
						\node (11) at (10,0) [white] {};
						\node (12) at (11,1) [white] {};
						\draw[->] (1) -- (2);
						\draw[->] (2) -- (3);
						\draw[->] (3) -- (4);
						\draw[->] (4) -- (5);
						\draw[->] (5) -- (6);
						\draw[->] (6) -- (7);
						\draw[->] (7) -- (8);
						\draw[->] (8) -- (9);
						\draw[->] (9) -- (10);
						\draw[->] (10) -- (11);
						\draw[->] (11) -- (12);
				\end{tikzpicture}} ;
				\node at (150pt,30pt){
					\begin{tikzpicture}[scale=0.35, every node/.style={scale=0.5}]
						\node (1) at (0,0) [w] {};
						\node (2) at (1,1) [b] {};
						\node (3) at (2,0) [b] {};
						\node (4) at (3,1) [black] {};
						\node (5) at (4,0) [black] {};
						\node (6) at (5,1) [black] {};
						\node (7) at (6,0) [black] {};
						\node (8) at (7,1) [white] {};
						\node (9) at (8,0) [white] {};
						\node (10) at (9,1) [white] {};
						\node (11) at (10,0) [white] {};
						\node (12) at (11,1) [white] {};
						\draw[->] (1) -- (2);
						\draw[->] (2) -- (3);
						\draw[->] (3) -- (4);
						\draw[->] (4) -- (5);
						\draw[->] (5) -- (6);
						\draw[->] (6) -- (7);
						\draw[->] (7) -- (8);
						\draw[->] (8) -- (9);
						\draw[->] (9) -- (10);
						\draw[->] (10) -- (11);
						\draw[->] (11) -- (12);
				\end{tikzpicture}} ;
				\node at (0pt,45pt){
					\begin{tikzpicture}[scale=0.35, every node/.style={scale=0.5}]
						\node (1) at (0,0) [b] {};
						\node (2) at (1,1) [w] {};
						\node (3) at (2,0) [w] {};
						\node (4) at (3,1) [black] {};
						\node (5) at (4,0) [black] {};
						\node (6) at (5,1) [black] {};
						\node (7) at (6,0) [white] {};
						\node (8) at (7,1) [white] {};
						\node (9) at (8,0) [black] {};
						\node (10) at (9,1) [white] {};
						\node (11) at (10,0) [white] {};
						\node (12) at (11,1) [white] {};
						\draw[->] (1) -- (2);
						\draw[->] (2) -- (3);
						\draw[->] (3) -- (4);
						\draw[->] (4) -- (5);
						\draw[->] (5) -- (6);
						\draw[->] (6) -- (7);
						\draw[->] (7) -- (8);
						\draw[->] (8) -- (9);
						\draw[->] (9) -- (10);
						\draw[->] (10) -- (11);
						\draw[->] (11) -- (12);
				\end{tikzpicture}} ;
				\node at (150pt,60pt){
					\begin{tikzpicture}[scale=0.35, every node/.style={scale=0.5}]
						\node (1) at (0,0) [w] {};
						\node (2) at (1,1) [w] {};
						\node (3) at (2,0) [b] {};
						\node (4) at (3,1) [black] {};
						\node (5) at (4,0) [black] {};
						\node (6) at (5,1) [black] {};
						\node (7) at (6,0) [black] {};
						\node (8) at (7,1) [black] {};
						\node (9) at (8,0) [white] {};
						\node (10) at (9,1) [white] {};
						\node (11) at (10,0) [white] {};
						\node (12) at (11,1) [white] {};
						\draw[->] (1) -- (2);
						\draw[->] (2) -- (3);
						\draw[->] (3) -- (4);
						\draw[->] (4) -- (5);
						\draw[->] (5) -- (6);
						\draw[->] (6) -- (7);
						\draw[->] (7) -- (8);
						\draw[->] (8) -- (9);
						\draw[->] (9) -- (10);
						\draw[->] (10) -- (11);
						\draw[->] (11) -- (12);
				\end{tikzpicture}} ;
				\node at (150pt,90pt){
					\begin{tikzpicture}[scale=0.35, every node/.style={scale=0.5}]
						\node (1) at (0,0) [w] {};
						\node (2) at (1,1) [w] {};
						\node (3) at (2,0) [w] {};
						\node (4) at (3,1) [black] {};
						\node (5) at (4,0) [black] {};
						\node (6) at (5,1) [black] {};
						\node (7) at (6,0) [black] {};
						\node (8) at (7,1) [black] {};
						\node (9) at (8,0) [black] {};
						\node (10) at (9,1) [white] {};
						\node (11) at (10,0) [white] {};
						\node (12) at (11,1) [white] {};
						\draw[->] (1) -- (2);
						\draw[->] (2) -- (3);
						\draw[->] (3) -- (4);
						\draw[->] (4) -- (5);
						\draw[->] (5) -- (6);
						\draw[->] (6) -- (7);
						\draw[->] (7) -- (8);
						\draw[->] (8) -- (9);
						\draw[->] (9) -- (10);
						\draw[->] (10) -- (11);
						\draw[->] (11) -- (12);
				\end{tikzpicture}	} ;
				\node at (300pt,75pt){
					\begin{tikzpicture}[scale=0.35, every node/.style={scale=0.5}]
						\node (1) at (0,0) [w] {};
						\node (2) at (1,1) [b] {};
						\node (3) at (2,0) [w] {};
						\node (4) at (3,1) [white] {};
						\node (5) at (4,0) [black] {};
						\node (6) at (5,1) [black] {};
						\node (7) at (6,0) [black] {};
						\node (8) at (7,1) [white] {};
						\node (9) at (8,0) [white] {};
						\node (10) at (9,1) [black] {};
						\node (11) at (10,0) [white] {};
						\node (12) at (11,1) [white] {};
						\draw[->] (1) -- (2);
						\draw[->] (2) -- (3);
						\draw[->] (3) -- (4);
						\draw[->] (4) -- (5);
						\draw[->] (5) -- (6);
						\draw[->] (6) -- (7);
						\draw[->] (7) -- (8);
						\draw[->] (8) -- (9);
						\draw[->] (9) -- (10);
						\draw[->] (10) -- (11);
						\draw[->] (11) -- (12);
				\end{tikzpicture}} ;
				\node at (0pt,90pt){
					\begin{tikzpicture}[scale=0.35, every node/.style={scale=0.5}]
						\node (1) at (0,0) [b] {};
						\node (2) at (1,1) [w] {};
						\node (3) at (2,0) [w] {};
						\node (4) at (3,1) [black] {};
						\node (5) at (4,0) [white] {};
						\node (6) at (5,1) [white] {};
						\node (7) at (6,0) [white] {};
						\node (8) at (7,1) [white] {};
						\node (9) at (8,0) [black] {};
						\node (10) at (9,1) [white] {};
						\node (11) at (10,0) [white] {};
						\node (12) at (11,1) [black] {};
						\draw[->] (1) -- (2);
						\draw[->] (2) -- (3);
						\draw[->] (3) -- (4);
						\draw[->] (4) -- (5);
						\draw[->] (5) -- (6);
						\draw[->] (6) -- (7);
						\draw[->] (7) -- (8);
						\draw[->] (8) -- (9);
						\draw[->] (9) -- (10);
						\draw[->] (10) -- (11);
						\draw[->] (11) -- (12);
				\end{tikzpicture}} ;
				\node at (150pt,120pt){
					\begin{tikzpicture}[scale=0.35, every node/.style={scale=0.5}]
						\node (1) at (0,0) [w] {};
						\node (2) at (1,1) [w] {};
						\node (3) at (2,0) [w] {};
						\node (4) at (3,1) [white] {};
						\node (5) at (4,0) [black] {};
						\node (6) at (5,1) [black] {};
						\node (7) at (6,0) [black] {};
						\node (8) at (7,1) [black] {};
						\node (9) at (8,0) [black] {};
						\node (10) at (9,1) [black] {};
						\node (11) at (10,0) [white] {};
						\node (12) at (11,1) [white] {};
						\draw[->] (1) -- (2);
						\draw[->] (2) -- (3);
						\draw[->] (3) -- (4);
						\draw[->] (4) -- (5);
						\draw[->] (5) -- (6);
						\draw[->] (6) -- (7);
						\draw[->] (7) -- (8);
						\draw[->] (8) -- (9);
						\draw[->] (9) -- (10);
						\draw[->] (10) -- (11);
						\draw[->] (11) -- (12);
				\end{tikzpicture}} ;
				\node at (300pt,105pt){
					\begin{tikzpicture}[scale=0.35, every node/.style={scale=0.5}]
						\node (1) at (0,0) [w] {};
						\node (2) at (1,1) [w] {};
						\node (3) at (2,0) [b] {};
						\node (4) at (3,1) [white] {};
						\node (5) at (4,0) [white] {};
						\node (6) at (5,1) [black] {};
						\node (7) at (6,0) [black] {};
						\node (8) at (7,1) [black] {};
						\node (9) at (8,0) [white] {};
						\node (10) at (9,1) [white] {};
						\node (11) at (10,0) [black] {};
						\node (12) at (11,1) [white] {};
						\draw[->] (1) -- (2);
						\draw[->] (2) -- (3);
						\draw[->] (3) -- (4);
						\draw[->] (4) -- (5);
						\draw[->] (5) -- (6);
						\draw[->] (6) -- (7);
						\draw[->] (7) -- (8);
						\draw[->] (8) -- (9);
						\draw[->] (9) -- (10);
						\draw[->] (10) -- (11);
						\draw[->] (11) -- (12);
				\end{tikzpicture}} ;
				\node at (0pt,135pt){
					\begin{tikzpicture}[scale=0.35, every node/.style={scale=0.5}]
						\node (1) at (0,0) [w] {};
						\node (2) at (1,1) [w] {};
						\node (3) at (2,0) [w] {};
						\node (4) at (3,1) [black] {};
						\node (5) at (4,0) [white] {};
						\node (6) at (5,1) [white] {};
						\node (7) at (6,0) [black] {};
						\node (8) at (7,1) [black] {};
						\node (9) at (8,0) [black] {};
						\node (10) at (9,1) [white] {};
						\node (11) at (10,0) [white] {};
						\node (12) at (11,1) [black] {};
						\draw[->] (1) -- (2);
						\draw[->] (2) -- (3);
						\draw[->] (3) -- (4);
						\draw[->] (4) -- (5);
						\draw[->] (5) -- (6);
						\draw[->] (6) -- (7);
						\draw[->] (7) -- (8);
						\draw[->] (8) -- (9);
						\draw[->] (9) -- (10);
						\draw[->] (10) -- (11);
						\draw[->] (11) -- (12);
				\end{tikzpicture}} ;
				\node at (150pt,150pt){
					\begin{tikzpicture}[scale=0.35, every node/.style={scale=0.5}]
						\node (1) at (0,0) [w] {};
						\node (2) at (1,1) [w] {};
						\node (3) at (2,0) [w] {};
						\node (4) at (3,1) [white] {};
						\node (5) at (4,0) [white] {};
						\node (6) at (5,1) [black] {};
						\node (7) at (6,0) [black] {};
						\node (8) at (7,1) [black] {};
						\node (9) at (8,0) [black] {};
						\node (10) at (9,1) [black] {};
						\node (11) at (10,0) [black] {};
						\node (12) at (11,1) [white] {};
						\draw[->] (1) -- (2);
						\draw[->] (2) -- (3);
						\draw[->] (3) -- (4);
						\draw[->] (4) -- (5);
						\draw[->] (5) -- (6);
						\draw[->] (6) -- (7);
						\draw[->] (7) -- (8);
						\draw[->] (8) -- (9);
						\draw[->] (9) -- (10);
						\draw[->] (10) -- (11);
						\draw[->] (11) -- (12);
				\end{tikzpicture}} ;
				\node at (150pt,180pt){
					\begin{tikzpicture}[scale=0.35, every node/.style={scale=0.5}]
						\node (1) at (0,0) [w] {};
						\node (2) at (1,1) [w] {};
						\node (3) at (2,0) [w] {};
						\node (4) at (3,1) [white] {};
						\node (5) at (4,0) [white] {};
						\node (6) at (5,1) [white] {};
						\node (7) at (6,0) [black] {};
						\node (8) at (7,1) [black] {};
						\node (9) at (8,0) [black] {};
						\node (10) at (9,1) [black] {};
						\node (11) at (10,0) [black] {};
						\node (12) at (11,1) [black] {};
						\draw[->] (1) -- (2);
						\draw[->] (2) -- (3);
						\draw[->] (3) -- (4);
						\draw[->] (4) -- (5);
						\draw[->] (5) -- (6);
						\draw[->] (6) -- (7);
						\draw[->] (7) -- (8);
						\draw[->] (8) -- (9);
						\draw[->] (9) -- (10);
						\draw[->] (10) -- (11);
						\draw[->] (11) -- (12);
				\end{tikzpicture}};
				\draw[->, shorten >=10pt, shorten <=10pt] (150pt,0pt)--(150pt,30pt);
				\draw[->, shorten >=31pt, shorten <=31pt] (150pt,0pt)--(0pt,45pt);
				\draw[->, shorten >=10pt, shorten <=10pt] (150pt,30pt)--(150pt,60pt);
				\draw[->, shorten >=31pt, shorten <=31pt] (150pt,30pt)--(300pt,75pt);
				\draw[->, shorten >=31pt, shorten <=31pt] (0pt,45pt)--(150pt,90pt);
				\draw[->, shorten >=10pt, shorten <=10pt] (0pt,45pt)--(0pt,90pt);
				\draw[->, shorten >=10pt, shorten <=10pt] (150pt,60pt)--(150pt,90pt);
				\draw[->, shorten >=31pt, shorten <=31pt] (150pt,60pt)--(300pt,105pt);
				\draw[->, shorten >=10pt, shorten <=10pt] (150pt,90pt)--(150pt,120pt);
				\draw[->, shorten >=31pt, shorten <=31pt] (150pt,90pt)--(0pt,135pt);
				\draw[->, shorten >=31pt, shorten <=31pt] (300pt,75pt)--(150pt,120pt);
				\draw[->, shorten >=10pt, shorten <=10pt] (0pt,90pt)--(0pt,135pt);
				\draw[->, shorten >=10pt, shorten <=10pt] (150pt,120pt)--(150pt,150pt);
				\draw[->, shorten >=31pt, shorten <=31pt] (300pt,105pt)--(150pt,150pt);
				\draw[->, shorten >=31pt, shorten <=31pt] (0pt,135pt)--(150pt,180pt);
				\draw[->, shorten >=10pt, shorten <=10pt] (150pt,150pt)--(150pt,180pt);
			\end{tikzpicture}
			\caption{The Hasse quiver of 2-extended hearts in $[2\text{-}\clH[2],2\text{-}\clH]$}\label{figure6}
		\end{figure}
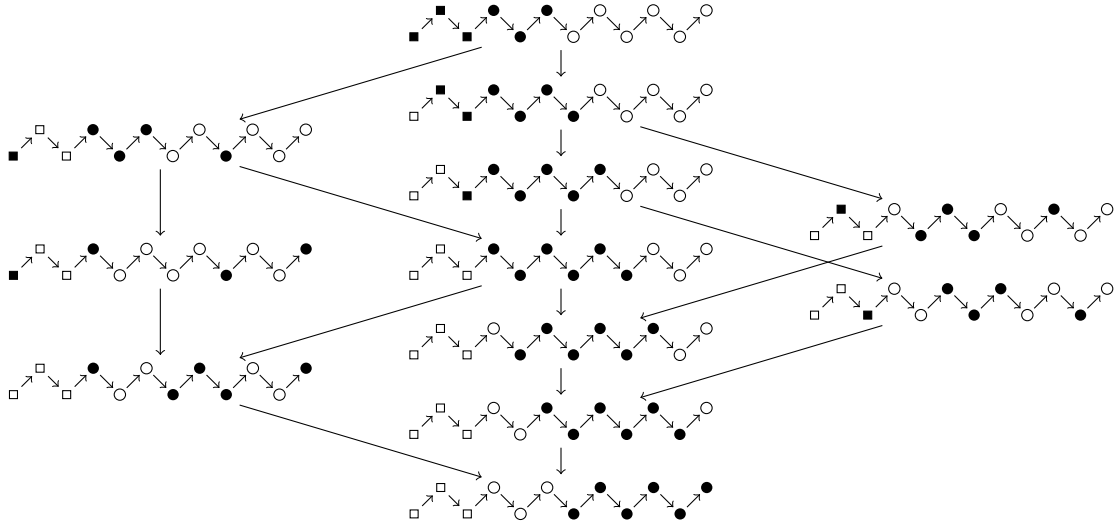
		
		The $t$-structures corresponding to the above 2-extended hearts are shown in \Cref{figure7}. Black dots represent indecomposable objects in the aisles, and square dots represent indecomposable objects in $\mod kQ $. The top $t$-structure is the standard one. These $ t$-structures are precisely those in $\tstr[\clU[2],\clU]$.
		
		\begin{figure}[htbp]
			\begin{tikzpicture}[yscale=-1] 
				\node at (125pt,0pt){
					\begin{tikzpicture}[scale=0.35, every node/.style={scale=0.5}]
						\node (1) at (0,0) [b] {};
						\node (2) at (1,1) [b] {};
						\node (3) at (2,0) [b] {};
						\node (4) at (3,1) [black] {};
						\node (5) at (4,0) [black] {};
						\node (6) at (5,1) [black] {};
						\node (7) at (6,0) [black] {};
						\node (8) at (7,1) [black] {};
						\node (9) at (8,0) [black] {};
						\node (78) at (9,0.5){$\cdots$};
						\draw[->] (1) -- (2);
						\draw[->] (2) -- (3);
						\draw[->] (3) -- (4);
						\draw[->] (4) -- (5);
						\draw[->] (5) -- (6);
						\draw[->] (6) -- (7);
						\draw[->] (7) -- (8);
						\draw[->] (8) -- (9);
				\end{tikzpicture}};
				\node at (125pt,30pt){
					\begin{tikzpicture}[scale=0.35, every node/.style={scale=0.5}]
						\node (1) at (0,0) [w] {};
						\node (2) at (1,1) [b] {};
						\node (3) at (2,0) [b] {};
						\node (4) at (3,1) [black] {};
						\node (5) at (4,0) [black] {};
						\node (6) at (5,1) [black] {};
						\node (7) at (6,0) [black] {};
						\node (8) at (7,1) [black] {};
						\node (9) at (8,0) [black] {};
						\node (78) at (9,0.5){$\cdots$};
						\draw[->] (1) -- (2);
						\draw[->] (2) -- (3);
						\draw[->] (3) -- (4);
						\draw[->] (4) -- (5);
						\draw[->] (5) -- (6);
						\draw[->] (6) -- (7);
						\draw[->] (7) -- (8);
						\draw[->] (8) -- (9);
				\end{tikzpicture}};
				\node at (0pt,45pt){
					\begin{tikzpicture}[scale=0.35, every node/.style={scale=0.5}]
						\node (1) at (0,0) [b] {};
						\node (2) at (1,1) [w] {};
						\node (3) at (2,0) [w] {};
						\node (4) at (3,1) [black] {};
						\node (5) at (4,0) [black] {};
						\node (6) at (5,1) [black] {};
						\node (7) at (6,0) [black] {};
						\node (8) at (7,1) [black] {};
						\node (9) at (8,0) [black] {};
						\node (78) at (9,0.5){$\cdots$};
						\draw[->] (1) -- (2);
						\draw[->] (2) -- (3);
						\draw[->] (3) -- (4);
						\draw[->] (4) -- (5);
						\draw[->] (5) -- (6);
						\draw[->] (6) -- (7);
						\draw[->] (7) -- (8);
						\draw[->] (8) -- (9);
				\end{tikzpicture}};
				\node at (125pt,60pt){
					\begin{tikzpicture}[scale=0.35, every node/.style={scale=0.5}]
						\node (1) at (0,0) [w] {};
						\node (2) at (1,1) [w] {};
						\node (3) at (2,0) [b] {};
						\node (4) at (3,1) [black] {};
						\node (5) at (4,0) [black] {};
						\node (6) at (5,1) [black] {};
						\node (7) at (6,0) [black] {};
						\node (8) at (7,1) [black] {};
						\node (9) at (8,0) [black] {};
						\node (78) at (9,0.5){$\cdots$};
						\draw[->] (1) -- (2);
						\draw[->] (2) -- (3);
						\draw[->] (3) -- (4);
						\draw[->] (4) -- (5);
						\draw[->] (5) -- (6);
						\draw[->] (6) -- (7);
						\draw[->] (7) -- (8);
						\draw[->] (8) -- (9);
				\end{tikzpicture}};
				\node at (125pt,90pt){
					\begin{tikzpicture}[scale=0.35, every node/.style={scale=0.5}]
						\node (1) at (0,0) [w] {};
						\node (2) at (1,1) [w] {};
						\node (3) at (2,0) [w] {};
						\node (4) at (3,1) [black] {};
						\node (5) at (4,0) [black] {};
						\node (6) at (5,1) [black] {};
						\node (7) at (6,0) [black] {};
						\node (8) at (7,1) [black] {};
						\node (9) at (8,0) [black] {};
						\node (78) at (9,0.5){$\cdots$};
						\draw[->] (1) -- (2);
						\draw[->] (2) -- (3);
						\draw[->] (3) -- (4);
						\draw[->] (4) -- (5);
						\draw[->] (5) -- (6);
						\draw[->] (6) -- (7);
						\draw[->] (7) -- (8);
						\draw[->] (8) -- (9);
				\end{tikzpicture}};
				\node at (250pt,75pt){
					\begin{tikzpicture}[scale=0.35, every node/.style={scale=0.5}]
						\node (1) at (0,0) [w] {};
						\node (2) at (1,1) [b] {};
						\node (3) at (2,0) [w] {};
						\node (4) at (3,1) [white] {};
						\node (5) at (4,0) [black] {};
						\node (6) at (5,1) [black] {};
						\node (7) at (6,0) [black] {};
						\node (8) at (7,1) [black] {};
						\node (9) at (8,0) [black] {};
						\node (78) at (9,0.5){$\cdots$};
						\draw[->] (1) -- (2);
						\draw[->] (2) -- (3);
						\draw[->] (3) -- (4);
						\draw[->] (4) -- (5);
						\draw[->] (5) -- (6);
						\draw[->] (6) -- (7);
						\draw[->] (7) -- (8);
						\draw[->] (8) -- (9);
				\end{tikzpicture}};
				\node at (0pt,90pt){
					\begin{tikzpicture}[scale=0.35, every node/.style={scale=0.5}]
						\node (1) at (0,0) [b] {};
						\node (2) at (1,1) [w] {};
						\node (3) at (2,0) [w] {};
						\node (4) at (3,1) [black] {};
						\node (5) at (4,0) [white] {};
						\node (6) at (5,1) [white] {};
						\node (7) at (6,0) [black] {};
						\node (8) at (7,1) [black] {};
						\node (9) at (8,0) [black] {};
						\node (78) at (9,0.5){$\cdots$};
						\draw[->] (1) -- (2);
						\draw[->] (2) -- (3);
						\draw[->] (3) -- (4);
						\draw[->] (4) -- (5);
						\draw[->] (5) -- (6);
						\draw[->] (6) -- (7);
						\draw[->] (7) -- (8);
						\draw[->] (8) -- (9);
				\end{tikzpicture}};
				\node at (125pt,120pt){
					\begin{tikzpicture}[scale=0.35, every node/.style={scale=0.5}]
						\node (1) at (0,0) [w] {};
						\node (2) at (1,1) [w] {};
						\node (3) at (2,0) [w] {};
						\node (4) at (3,1) [white] {};
						\node (5) at (4,0) [black] {};
						\node (6) at (5,1) [black] {};
						\node (7) at (6,0) [black] {};
						\node (8) at (7,1) [black] {};
						\node (9) at (8,0) [black] {};
						\node (78) at (9,0.5){$\cdots$};
						\draw[->] (1) -- (2);
						\draw[->] (2) -- (3);
						\draw[->] (3) -- (4);
						\draw[->] (4) -- (5);
						\draw[->] (5) -- (6);
						\draw[->] (6) -- (7);
						\draw[->] (7) -- (8);
						\draw[->] (8) -- (9);
				\end{tikzpicture}};
				\node at (250pt,105pt){
					\begin{tikzpicture}[scale=0.35, every node/.style={scale=0.5}]
						\node (1) at (0,0) [w] {};
						\node (2) at (1,1) [w] {};
						\node (3) at (2,0) [b] {};
						\node (4) at (3,1) [white] {};
						\node (5) at (4,0) [white] {};
						\node (6) at (5,1) [black] {};
						\node (7) at (6,0) [black] {};
						\node (8) at (7,1) [black] {};
						\node (9) at (8,0) [black] {};
						\node (78) at (9,0.5){$\cdots$};
						\draw[->] (1) -- (2);
						\draw[->] (2) -- (3);
						\draw[->] (3) -- (4);
						\draw[->] (4) -- (5);
						\draw[->] (5) -- (6);
						\draw[->] (6) -- (7);
						\draw[->] (7) -- (8);
						\draw[->] (8) -- (9);
				\end{tikzpicture}};
				\node at (0pt,135pt){
					\begin{tikzpicture}[scale=0.35, every node/.style={scale=0.5}]
						\node (1) at (0,0) [w] {};
						\node (2) at (1,1) [w] {};
						\node (3) at (2,0) [w] {};
						\node (4) at (3,1) [black] {};
						\node (5) at (4,0) [white] {};
						\node (6) at (5,1) [white] {};
						\node (7) at (6,0) [black] {};
						\node (8) at (7,1) [black] {};
						\node (9) at (8,0) [black] {};
						\node (78) at (9,0.5){$\cdots$};
						\draw[->] (1) -- (2);
						\draw[->] (2) -- (3);
						\draw[->] (3) -- (4);
						\draw[->] (4) -- (5);
						\draw[->] (5) -- (6);
						\draw[->] (6) -- (7);
						\draw[->] (7) -- (8);
						\draw[->] (8) -- (9);
				\end{tikzpicture}};
				\node at (125pt,150pt){
					\begin{tikzpicture}[scale=0.35, every node/.style={scale=0.5}]
						\node (1) at (0,0) [w] {};
						\node (2) at (1,1) [w] {};
						\node (3) at (2,0) [w] {};
						\node (4) at (3,1) [white] {};
						\node (5) at (4,0) [white] {};
						\node (6) at (5,1) [black] {};
						\node (7) at (6,0) [black] {};
						\node (8) at (7,1) [black] {};
						\node (9) at (8,0) [black] {};
						\node (78) at (9,0.5){$\cdots$};
						\draw[->] (1) -- (2);
						\draw[->] (2) -- (3);
						\draw[->] (3) -- (4);
						\draw[->] (4) -- (5);
						\draw[->] (5) -- (6);
						\draw[->] (6) -- (7);
						\draw[->] (7) -- (8);
						\draw[->] (8) -- (9);
				\end{tikzpicture}};
				\node at (125pt,180pt){
					\begin{tikzpicture}[scale=0.35, every node/.style={scale=0.5}]
						\node (1) at (0,0) [w] {};
						\node (2) at (1,1) [w] {};
						\node (3) at (2,0) [w] {};
						\node (4) at (3,1) [white] {};
						\node (5) at (4,0) [white] {};
						\node (6) at (5,1) [white] {};
						\node (7) at (6,0) [black] {};
						\node (8) at (7,1) [black] {};
						\node (9) at (8,0) [black] {};
						\node (78) at (9,0.5){$\cdots$};
						\draw[->] (1) -- (2);
						\draw[->] (2) -- (3);
						\draw[->] (3) -- (4);
						\draw[->] (4) -- (5);
						\draw[->] (5) -- (6);
						\draw[->] (6) -- (7);
						\draw[->] (7) -- (8);
						\draw[->] (8) -- (9);
				\end{tikzpicture}};
				\draw[->, shorten >=10pt, shorten <=10pt] (125pt,0pt)--(125pt,30pt);
				\draw[->, shorten >=26pt, shorten <=26pt] (125pt,0pt)--(0pt,45pt);
				\draw[->, shorten >=10pt, shorten <=10pt] (125pt,30pt)--(125pt,60pt);
				\draw[->, shorten >=26pt, shorten <=26pt] (125pt,30pt)--(250pt,75pt);
				\draw[->, shorten >=26pt, shorten <=26pt] (0pt,45pt)--(125pt,90pt);
				\draw[->, shorten >=10pt, shorten <=10pt] (0pt,45pt)--(0pt,90pt);
				\draw[->, shorten >=10pt, shorten <=10pt] (125pt,60pt)--(125pt,90pt);
				\draw[->, shorten >=26pt, shorten <=26pt] (125pt,60pt)--(250pt,105pt);
				\draw[->, shorten >=10pt, shorten <=10pt] (125pt,90pt)--(125pt,120pt);
				\draw[->, shorten >=26pt, shorten <=26pt] (125pt,90pt)--(0pt,135pt);
				\draw[->, shorten >=26pt, shorten <=26pt] (250pt,75pt)--(125pt,120pt);
				\draw[->, shorten >=10pt, shorten <=10pt] (0pt,90pt)--(0pt,135pt);
				\draw[->, shorten >=10pt, shorten <=10pt] (125pt,120pt)--(125pt,150pt);
				\draw[->, shorten >=26pt, shorten <=26pt] (250pt,105pt)--(125pt,150pt);
				\draw[->, shorten >=26pt, shorten <=26pt] (0pt,135pt)--(125pt,180pt);
				\draw[->, shorten >=10pt, shorten <=10pt] (125pt,150pt)--(125pt,180pt);
			\end{tikzpicture}
			\caption{The Hasse quiver of $t$-structures in $\tstr[\clU[2],\clU]$}\label{figure7}
		\end{figure}
	\end{Ex}
	
	\section{The extensions of  \texorpdfstring{$t$}{t}-structures}
	Let $\D$ be a triangulated category and $\clU=(\Ul0,\Ug0)$ be a $t$-structure with heart $\clH=\Ul0\cap\Ug0$. Let $\clS$ be a triangulated full subcategory of $\D$. By \cite{BBD82}, the following statements are equivalent.
	\begin{enumerate}
		\item $\clU_\clS=(\clS\cap\Ul0,\clS\cap\Ug0)$ is a $t$-structure on $\clS$.
		\item $\tau_{\leqslant 0}(\clS) \subseteq \clS$.
		\item $\tau_{\geqslant 0}(\clS) \subseteq \clS$.
	\end{enumerate}
	
	We wonder when a $t$-structure on $\clS$ can be  extended to a $t$-structure on $\D$.
	
	Throughout this section, let $(\Ul0,\Ug0)$ be a $t$-structure on $\D$ and $\clS$ a triangulated subcategory of $\D$ such that the heart $\clH$ is contained in $\clS$ and $\clU_{\clS}$ is a $t$-structure on $\clS$. Clearly, the heart of $\clU_{\clS}$ coincides with the heart of $\clU$, and their corresponding $m$-extended hearts also coincide, denoted by $\Hm$.
	
	Via the HRS tilting established in \Cref{proposition4.1}, we now demonstrate a one-to-one correspondence between the $t$-structures of $\D$ and $\clS$, which are linked by the same extended heart $\Hm$. This correspondence is depicted in \Cref{figure8}, where the morphisms are given by \Cref{proposition4.1}. In the following, we will calculate  $\psi\circ\phi'$ in \Cref{lemma5.1} and $\psi'\circ\phi$ in \Cref{lemma5.2}.
	
	\begin{figure}[htp]
		$$\xymatrix{\tstr[\clU[m],\clU] \ar@<0.5ex>[rr]^{\lambda} \ar@<-0.5ex>[rd]_{\phi} && \tstr[\clU_\clS [m],\clU_\clS]\ar@<0.5ex>[ll]^{\mu}\ar@<0.5ex>[ld]^{\phi'}\\&\stors \Hm\ar@<0.5ex>[ur]^{\psi'}\ar@<-0.5ex>[ul]_{\psi}&},$$
		\caption{The bijections}\label{figure8}
	\end{figure}	
	
	\begin{Lemma}\label{lemma5.1}
		Let $(\Yl0,\Yg0) \in \tstr[\clU_\clS[m],\clU_\clS]$ be a $t$-structure on $\clS$, then 
		$$\psi\circ\phi'(\Yl0,\Yg0)=(\Ul{-m}*\Yl0,\Yg0*\Ug0).$$
	\end{Lemma}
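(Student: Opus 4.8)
The plan is to compute $\phi'$ and then $\psi$ explicitly and to reduce the asserted identity to two equalities of the form $\mathcal{X}*\mathcal{Y}=\mathcal{X}*\mathcal{Y}'$. Since the heart of $\clU_\clS$ is $\clH$ and its $m$-extended heart is again $\Hm$ (as recalled above, using $\clH\subseteq\clS$, which forces $\Hm\subseteq\clS$), \Cref{proposition4.1} applies verbatim to $\clS$ with the $t$-structure $\clU_\clS$. It gives $\phi'(\Yl0,\Yg0)=(\T,\F)$ with $\T=\Yl0\cap\Ug{1-m}$ and $\F=\Yg1\cap\Ul0$ (the intersections with $\clS$ coming from \Cref{proposition4.1} are redundant because $\Yl0,\Yg1\subseteq\clS$), and then $\psi(\T,\F)=(\Ul{-m}*\T,\F[1]*\Ug0)$. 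As $\F[1]=(\Yg1\cap\Ul0)[1]=\Yg0\cap\Ul{-1}$, the claim becomes
$$\Ul{-m}*(\Yl0\cap\Ug{1-m})=\Ul{-m}*\Yl0\qquad\text{and}\qquad(\Yg0\cap\Ul{-1})*\Ug0=\Yg0*\Ug0.$$
In both, one inclusion is immediate from $\Yl0\cap\Ug{1-m}\subseteq\Yl0$ and $\Yg0\cap\Ul{-1}\subseteq\Yg0$, combined with closedness of $\Ul{-m}$ and $\Ug0$ under extensions. So the work is to show $\Yl0\subseteq\Ul{-m}*(\Yl0\cap\Ug{1-m})$ and $\Yg0\subseteq(\Yg0\cap\Ul{-1})*\Ug0$.

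For the first, I would take $X\in\Yl0$ and apply the truncation triangle $\tau_{\leqslant -m}X\rightarrow X\rightarrow\tau_{\geqslant 1-m}X\rightarrow(\tau_{\leqslant -m}X)[1]$ of $\D$ for the $t$-structure $\clU[m]=(\Ul{-m},\Ug{-m})$, so $\tau_{\leqslant -m}X\in\Ul{-m}$ and $\tau_{\geqslant 1-m}X\in\Ug{1-m}$. Because $\clU_\clS$ is a $t$-structure on $\clS$, the equivalent conditions recalled at the start of this section, together with closedness of $\clS$ under shifts, guarantee that the truncation functors of $\D$ restrict to $\clS$; hence $\tau_{\leqslant -m}X\in\clS\cap\Ul{-m}$ and $\tau_{\geqslant 1-m}X\in\clS\cap\Ug{1-m}$. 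Since $(\Yl0,\Yg0)\in\tstr[\clU_\clS[m],\clU_\clS]$, we have $\clS\cap\Ul{-m}\subseteq\Yl0$, so $\tau_{\leqslant -m}X\in\Yl0$; and as $\Yl0$ is an aisle on $\clS$ it is closed under cones, whence the cone $\tau_{\geqslant 1-m}X$ of $\tau_{\leqslant -m}X\rightarrow X$ lies in $\Yl0$, i.e. $\tau_{\geqslant 1-m}X\in\Yl0\cap\Ug{1-m}$. Thus $X\in\Ul{-m}*(\Yl0\cap\Ug{1-m})$.

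For the second, I would take $X\in\Yg0$ and use the truncation triangle $\tau_{\leqslant -1}X\rightarrow X\rightarrow\tau_{\geqslant 0}X\rightarrow(\tau_{\leqslant -1}X)[1]$ for $\clU[1]$, so $\tau_{\leqslant -1}X\in\Ul{-1}$ and $\tau_{\geqslant 0}X\in\Ug0$. Again $\tau_{\leqslant -1}X,\tau_{\geqslant 0}X\in\clS$; and since the inclusion of aisles $\Yl0\subseteq\clS\cap\Ul0$ forces the reverse inclusion of co-aisles $\clS\cap\Ug0\subseteq\Yg0$, we get $\tau_{\geqslant 0}X\in\Yg0$. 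Now $\tau_{\leqslant -1}X$ is the cocone of the morphism $X\rightarrow\tau_{\geqslant 0}X$ between objects of $\Yg0$, and $\Yg0$, being a co-aisle on $\clS$, is closed under cocones; hence $\tau_{\leqslant -1}X\in\Yg0$, so $\tau_{\leqslant -1}X\in\Yg0\cap\Ul{-1}$. Thus $X\in(\Yg0\cap\Ul{-1})*\Ug0$. The two displayed equalities now give $\psi\circ\phi'(\Yl0,\Yg0)=(\Ul{-m}*\Yl0,\Yg0*\Ug0)$, and since this pair equals the $t$-structure $\psi(\T,\F)$ it is automatically a $t$-structure.

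I expect the main obstacle to be the shift bookkeeping rather than any single computation: one must carefully match the $[1]$ appearing in $\psi(\T,\F)=(\Ul{-m}*\T,\F[1]*\Ug0)$ against the torsionfree class $\F=\Yg1\cap\Ul0$ produced by $\phi'$, identify $\F[1]$ with $\Yg0\cap\Ul{-1}$, and be sure that the truncation functors of $\D$ used above land back in $\clS$. The last point is precisely where the hypotheses that $\clH\subseteq\clS$ (so that $\Hm\subseteq\clS$ and \Cref{proposition4.1} is available on $\clS$) and that $\clU_\clS$ is a $t$-structure on $\clS$ are used in an essential way.
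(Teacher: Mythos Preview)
Your proof is correct, and it is in fact more economical than the paper's. Both arguments reduce to showing $\Ul{-m}*(\Hm\cap\Yl0)=\Ul{-m}*\Yl0$ (and the dual equality). The paper proves the nontrivial inclusion by taking $X\in\Ul{-m}*\Yl0$, writing it as an extension $A\to X\to B$ with $A\in\Ul{-m}$ and $B\in\Yl0$, then comparing this with the $\clU$-truncation triangle $C\to X\to D$ of $X$ via the octahedral axiom to identify $D\cong\tau_{\geqslant 1-m}B\in\Hm\cap\Yl0$. You instead reduce to the stronger but easier statement $\Yl0\subseteq\Ul{-m}*(\Hm\cap\Yl0)$: for $X\in\Yl0\subseteq\clS$ a single truncation triangle suffices, using that $\tau_{\leqslant -m}$ preserves $\clS$ (hence $\tau_{\leqslant -m}X\in\clS\cap\Ul{-m}\subseteq\Yl0$) and that aisles are closed under cones; the full inclusion then follows from $\Ul{-m}*\Ul{-m}=\Ul{-m}$. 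This bypasses the octahedral axiom entirely. One small expository point: the phrase ``combined with closedness of $\Ul{-m}$ and $\Ug0$ under extensions'' belongs with the reduction step $\Ul{-m}*\Yl0\subseteq\Ul{-m}*\Ul{-m}*(\Hm\cap\Yl0)=\Ul{-m}*(\Hm\cap\Yl0)$, not with the trivial inclusion; you may want to reposition it.
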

	\begin{proof}
		Since $(\Yl0,\Yg0) \in \tstr[\clU_\clS[m],\clU_\clS]$, we obtain an $s$-torsion pair $\phi'(\Yl0,\Yg0)\in\stors\Hm$ by \Cref{proposition4.1}. By \Cref{proposition4.1} again, we have a $t$-structure
		\begin{align*}
			\psi\circ\phi'(\Yl0,\Yg0)&=\psi(\Hm\cap\Yl0,\Hm[1]\cap\Yg0)\\&=(\Ul{-m}*(\Hm\cap\Yl0),(\Hm[1]\cap\Yg0)*\Ug0).
		\end{align*}
		
		Clearly, $\Ul{-m}*(\Hm\cap\Yl0) \subseteq \Ul{-m}*\Yl0$. Next, we show that $\Ul{-m}*\Yl0 \subseteq \Ul{-m}*(\Hm\cap\Yl0)$. Suppose $ X \in \Ul{-m}*\Yl0$, then there exists a distinguished triangle
		$$ A \stackrel{p}{\rightarrow}  X \rightarrow  B \rightarrow  A[1]$$
		with $ A \in \Ul{-m}$ and $ B \in \Yl0$. Since $X \in \D$, there exists a distinguished triangle
		$$ C \stackrel{r}{\rightarrow}  X \stackrel{s}{\rightarrow} D \rightarrow  D[1]$$
		with $C \in \Ul{-m}$ and $ D \in \Ug{-m+1}$. We claim that $D\in \Hm\cap\Yl0$, thus $X \in \Ul{-m}*(\Hm\cap\Yl0)$. In fact,  since $s\circ p =0$, there exists a morphism $q \in \D( A, C)$ such that $p=r\circ q$. Applying the octahedral axiom, we obtain the following commutative diagram
		$$\xymatrix{ A \ar[r]^q\ar@{=}[d] &  C \ar[r]\ar[d]^r &  E \ar[r]\ar@{-->}[d] &  A[1]\ar@{=}[d]\\  A \ar[r]^p & X \ar[r]\ar[d]^s &  B \ar[r]\ar@{-->}[d] &  A[1]\\& D\ar@{=}[r]\ar[d] & D\ar@{-->}[d] &\\& C[1]\ar[r] &  E[1] }$$
		where the first row is a distinguished triangle. Since  $A, C \in \Ul{-m}$, it follows that $ E \in \Ul{-m}$. In addition, as $D \in \Ug{-m+1}$ and $B \in \clS$, we have $ E \cong \tau_{\leqslant -m}B \in  \clS\cap\Ul{-m}\subseteq\Yl0$ and $ D \cong \tau_{\geqslant-m+1}B \in \clS\cap\Ug{-m+1}\subseteq\Hm$. Note that $ B \in  \Yl0$, we infer that $ D \in \Yl0$, thus $D\in \Hm\cap\Yl0$.  Therefore,  $\Ul{-m}*\Yl0 = \Ul{-m}*(\Hm\cap\Yl0)$. The proof for the co-aisle is analogous.
	\end{proof}
	
	\begin{Lemma}\label{lemma5.2}
		Let $(\Xl0,\Xg0) \in \tstr[\clU[m],\clU]$ be a $t$-structure on $\D$, then 
		$$\psi'\circ\phi(\Xl0,\Xg0)=(\clS\cap\Xl0,\clS\cap\Xg0).$$
		\end{Lemma}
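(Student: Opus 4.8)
The plan is to unwind the composition $\psi'\circ\phi$ via \Cref{proposition4.1}, reduce the claim to two equalities of subcategories of $\D$, and then establish each of these by applying the truncation functors of $\clU$ to objects of $\clS$.

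First, \Cref{proposition4.1} applied to $\clU$ gives $\phi(\Xl0,\Xg0)=(\Xl0\cap\Ug{1-m},\,\Xg1\cap\Ul0)\in\stors\Hm$, so in particular both $\Xl0\cap\Ug{1-m}$ and $\Xg1\cap\Ul0$ are subcategories of $\Hm\subseteq\clS$, whence also $(\Xg1\cap\Ul0)[1]=\Xg0\cap\Ul{-1}\subseteq\clS$ (the equality is the shift identities $\Xg1[1]=\Xg0$ and $\Ul0[1]=\Ul{-1}$). Applying \Cref{proposition4.1} to the $t$-structure $\clU_\clS$ on $\clS$ (whose $m$-shift is $(\clS\cap\Ul{-m},\clS\cap\Ug{-m})$) then yields
$$\psi'\circ\phi(\Xl0,\Xg0)=\bigl((\clS\cap\Ul{-m})*(\Xl0\cap\Ug{1-m}),\ (\Xg1\cap\Ul0)[1]*(\clS\cap\Ug0)\bigr).$$
Thus the statement reduces to the two equalities
$$(\clS\cap\Ul{-m})*(\Xl0\cap\Ug{1-m})=\clS\cap\Xl0\qquad\text{and}\qquad(\Xg1\cap\Ul0)[1]*(\clS\cap\Ug0)=\clS\cap\Xg0.$$

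For the first equality the inclusion $\subseteq$ is immediate: an extension of an object of $\Xl0\cap\Ug{1-m}$ by one of $\clS\cap\Ul{-m}$ lies in $\clS$ because $\clS$ is triangulated, and in $\Xl0$ because $\Ul{-m}\subseteq\Xl0$ (as $(\Xl0,\Xg0)\in\tstr[\clU[m],\clU]$) and aisles are closed under extensions. For $\supseteq$, I would take $X\in\clS\cap\Xl0$ and use the $\clU$-truncation triangle $\tau_{\leqslant-m}X\to X\to\tau_{\geqslant1-m}X\to(\tau_{\leqslant-m}X)[1]$. Since $\clU_\clS$ is a $t$-structure on $\clS$, the truncation functors of $\clU$ carry $\clS$ into itself, so $\tau_{\leqslant-m}X\in\clS\cap\Ul{-m}$ and $\tau_{\geqslant1-m}X\in\clS\cap\Ug{1-m}$; and since $\tau_{\geqslant1-m}X$ is the cone of $\tau_{\leqslant-m}X\to X$, with both $\tau_{\leqslant-m}X\in\Ul{-m}\subseteq\Xl0$ and $X\in\Xl0$, and aisles are closed under cones, we get $\tau_{\geqslant1-m}X\in\Xl0\cap\Ug{1-m}$. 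The triangle then exhibits $X$ in $(\clS\cap\Ul{-m})*(\Xl0\cap\Ug{1-m})$.

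The second equality is proved dually, using the $\clU$-truncation triangle $\tau_{\leqslant-1}X\to X\to\tau_{\geqslant0}X\to(\tau_{\leqslant-1}X)[1]$ of $X\in\clS\cap\Xg0$, the inclusion $\Ug0\subseteq\Xg0$ (which follows from $\Xl0\subseteq\Ul0$), the fact that co-aisles are closed under cocones, and the identity $(\Xg1\cap\Ul0)[1]=\Xg0\cap\Ul{-1}$; the inclusion $\subseteq$ again uses extension-closedness of $\Xg0$, the triangulated structure of $\clS$, and $(\Xg1\cap\Ul0)[1]\subseteq\clS$. I expect the only step needing genuine care to be these backward inclusions: the crucial point is that the hypothesis ``$\clU_\clS$ is a $t$-structure on $\clS$'' is exactly what forces the $\clU$-truncations of an object of $\clS$ to remain in $\clS$, and once that is combined with the closure of aisles under cones and of co-aisles under cocones, the argument is formal.
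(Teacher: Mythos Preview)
Your proposal is correct and follows essentially the same approach as the paper: unwind $\psi'\circ\phi$ via \Cref{proposition4.1}, then for each of the two resulting equalities verify the backward inclusion by applying the $\clU$-truncation triangle to an object of $\clS$ (using that $\clU_\clS$ being a $t$-structure forces truncations to stay in $\clS$) and using closure of aisles under cones (respectively co-aisles under cocones). The only cosmetic difference is that the paper writes the $s$-torsion pair as $(\Hm\cap\Xl0,\Hm[1]\cap\Xg0)$ rather than your $(\Xl0\cap\Ug{1-m},\Xg1\cap\Ul0)$, but these coincide because $\Xl0\subseteq\Ul0$ and $\Xg0\subseteq\Ug{-m}$.
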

		\begin{proof}
		Since $(\Xl0,\Xg0) \in \tstr[\clU[m],\clU]$, we obtain an $s$-torsion pair $\phi(\Xl0,\Xg0)\in\stors\Hm$ by \Cref{proposition4.1}. By \Cref{proposition4.1} again, we have a $t$-structure
		\begin{align*}
			\psi'\circ\phi(\Xl0,\Xg0)&=\psi'(\Hm\cap\Xl0,\Hm[1]\cap\Xg0)\\&=((\clS\cap\Ul{-m})*(\Hm\cap\Xl0),(\Hm[1]\cap\Xg0)*(\clS\cap\Ug0)).
		\end{align*}
		
		It is clear that $(\clS\cap\Ul{-m})*(\Hm\cap\Xl0)\subseteq\clS\cap\Xl0$. Conversely, let $ X\in \clS\cap\Xl0$. There exists a distinguished triangle in $\clS$
		$$ A\rightarrow X \rightarrow B \rightarrow A[1]$$
		with $A \in \clS\cap\Ul{-m}$ and $ B \in \clS\cap\Ug{-m+1}$. We claim that $ B \in \Hm\cap\Xl0  $, thus $ X\in (\clS\cap\Ul{-m})*(\Hm\cap\Xl0)$.  In fact, since $ X \in \Xl0$ and $ A \in \clS\cap\Ul{-m} \subseteq \Ul{-m} \subseteq \Xl0$, it follows that $ B\in \Xl0\subseteq \Ul0$. We have $B \in \clS\cap\Ug{-m+1} \cap \Ul0=\Hm$. Therefore, $B \in (\Hm\cap\Xl0)$. The proof for the co-aisle is analogous.
	\end{proof}
	
	According to \Cref{proposition4.1}, \Cref{lemma5.1} and \Cref{lemma5.2}, we have the following result.
	
	\begin{Thm}\label{theorem5.3}
		Let $\D$ be a triangulated category and $\clU=(\Ul0,\Ug0)$ be a $t$-structure on $\D$ with heart $\clH$. Let $\clS$ be a triangulated full subcategory of $\D$ such that $\clU_\clS=(\clS\cap\Ul0,\clS\cap\Ug0)$ is a $t$-structure on $\clS$ and $\clH \subseteq \clS$. Then there are order preserving, mutually inverse bijections
		$$\xymatrix@R=10pt{\tstr[\clU[m],\clU] \ar@<0.5ex>[rr]^{\lambda} && \tstr[\clU_\clS[m],\clU_\clS] \ar@<0.5ex>[ll]^{\mu}	}$$	
		given by
		\begin{align}
			\lambda(\Xl0,\Xg0)&=( \clS\cap\Xl0,\clS\cap\Xg0), \label{eq5.1}\\ 
			\mu(\Yl0,\Yg0)&=(\Ul{-m}*\Yl0,\Yg0*\Ug0).\label{eq5.2}
		\end{align}
	\end{Thm}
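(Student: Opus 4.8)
The plan is to deduce the theorem from \Cref{proposition4.1} by routing both posets through the common $m$-extended heart $\Hm$. Applying \Cref{proposition4.1} to the ambient category $\D$ gives order-preserving, mutually inverse bijections $\phi\colon\tstr[\clU[m],\clU]\rightleftarrows\stors\Hm\colon\psi$. Applying it to the triangulated subcategory $\clS$ equipped with the $t$-structure $\clU_\clS$ — whose heart is again $\clH$ and whose $m$-extended heart is again $\Hm$ — gives order-preserving, mutually inverse bijections $\phi'\colon\tstr[\clU_\clS[m],\clU_\clS]\rightleftarrows\stors\Hm\colon\psi'$. I would then set $\lambda=\psi'\circ\phi$ and $\mu=\psi\circ\phi'$. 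As composites of order-preserving bijections these are order-preserving bijections, and they are mutually inverse: $\mu\circ\lambda=\psi\circ(\phi'\circ\psi')\circ\phi=\psi\circ\phi=\id_{\tstr[\clU[m],\clU]}$, and symmetrically $\lambda\circ\mu=\psi'\circ(\phi\circ\psi)\circ\phi'=\psi'\circ\phi'=\id_{\tstr[\clU_\clS[m],\clU_\clS]}$, where we use that $\phi,\psi$ (resp. $\phi',\psi'$) are mutually inverse.

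It then remains to identify these abstractly defined maps with the explicit formulas \eqref{eq5.1} and \eqref{eq5.2}, and this is exactly what \Cref{lemma5.1} and \Cref{lemma5.2} accomplish: \Cref{lemma5.2} computes $\psi'\circ\phi(\Xl0,\Xg0)=(\clS\cap\Xl0,\clS\cap\Xg0)$, so $\lambda=\psi'\circ\phi$ coincides with the restriction map; \Cref{lemma5.1} computes $\psi\circ\phi'(\Yl0,\Yg0)=(\Ul{-m}*\Yl0,\Yg0*\Ug0)$, so $\mu=\psi\circ\phi'$ coincides with the extension map. Feeding these two identifications into the composition argument of the previous paragraph finishes the proof. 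In particular, the well-definedness assertions — that restricting a $t$-structure from $\D$ to $\clS$ again yields a $t$-structure lying in $\tstr[\clU_\clS[m],\clU_\clS]$, and that $(\Ul{-m}*\Yl0,\Yg0*\Ug0)$ is a $t$-structure on $\D$ lying in $\tstr[\clU[m],\clU]$ — are automatic, since these subcategories are produced by $\psi'\circ\phi$ and $\psi\circ\phi'$, which land in the required posets by \Cref{proposition4.1}.

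For the theorem itself there is essentially no obstacle beyond this bookkeeping; all the substantive work is hidden in \Cref{lemma5.1} and \Cref{lemma5.2} (octahedral-axiom manipulations with the truncation functors). The one point I would be most careful to justify is that the two instances of \Cref{proposition4.1} really produce bijections onto the \emph{same} set $\stors\Hm$. This rests on the standing hypotheses $\clH\subseteq\clS$ and the equivalent conditions recalled at the start of this section (so that $\clS$ is closed under the truncation functors of $\clU$ and $\clU_\clS$ is genuinely a $t$-structure with heart $\clH$), which together force $\Hm=\Ug{-(m-1)}\cap\Ul0$ to be simultaneously the $m$-extended heart of $\clU$ and of $\clU_\clS$. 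Once this identification is secured, the composition argument goes through verbatim, and order preservation is inherited from the four maps $\phi,\psi,\phi',\psi'$.
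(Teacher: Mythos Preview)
Your proposal is correct and follows essentially the same approach as the paper: route both posets through $\stors\Hm$ via two applications of \Cref{proposition4.1}, set $\lambda=\psi'\circ\phi$ and $\mu=\psi\circ\phi'$, and invoke \Cref{lemma5.1} and \Cref{lemma5.2} to identify these composites with the explicit formulas. The paper's proof is the one-line ``According to \Cref{proposition4.1}, \Cref{lemma5.1} and \Cref{lemma5.2}'', and the commutative diagram you describe is precisely the one displayed just before those lemmas.
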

	
	\begin{Ex}
		Let $\Db\A$ and $D(\A)$ be the bounded and unbounded derived categories of an abelian category $\A$, with $\Db\A$ being a triangulated full subcategory of $D(\A)$. Let $\clU=(\Ul0,\Ug0)$  be the standard $t$-structure on  $D(\A)$ with heart $\A$. Note that $\clU_{\Db\A}=(\Db\A\cap\Ul0,\Db\A\cap\Ug0)$ is the standard $t$-structure on $\Db\A$ with heart $\A$. By \Cref{theorem5.3}, there exists order preserving, mutually inverse bijections
		$$\xymatrix@R=10pt{\tstr[\clU[m],\clU] \ar@<0.5ex>[rr]^{\lambda\qquad} && \tstr[\clU_{\Db\A}[m],\clU_{\Db\A}] \ar@<0.5ex>[ll]^{\mu\qquad}}$$
		where $\lambda$ and $\mu$ are given by \eqref{eq5.1} and \eqref{eq5.2}.
	\end{Ex}
	
	\begin{Ex}[{\cite[Theorem 5.1]{CLZ23}}]
		Let $\D$ be a triangulated category with a $t$-structure $\clU=(\Ul0,\Ug0)$. Let $\clS$ be the smallest triangulated full subcategory containing the heart $\clH=\Ul0\cap\Ug0$. Then $\clU_{\clS}=(\clS\cap\Ul0,\clS\cap\Ug0)$ is a $t$-structure on $\clS$, and there are order preserving, mutually inverse bijections
		$$\xymatrix@R=10pt{\tstr[\clU[m],\clU] \ar@<0.5ex>[rr]^{\lambda} && \tstr[\clU_\clS[m],\clU_\clS] \ar@<0.5ex>[ll]^{\mu}	}$$	
		given by \eqref{eq5.1} and \eqref{eq5.2}.
	\end{Ex}

\end{document}